\documentclass[12pt]{amsart}

\pdfoutput=1

\usepackage[text={420pt,660pt},centering]{geometry}

\usepackage{xcolor}
\usepackage{esint,amssymb}
\usepackage{graphicx}
\usepackage{MnSymbol}
\usepackage{mathtools}
\usepackage[colorlinks=true, pdfstartview=FitV, linkcolor=blue, citecolor=blue, urlcolor=blue,pagebackref=false]{hyperref}
\usepackage{microtype}

\usepackage{bm}
\usepackage{scalerel} 
\usepackage{dsfont}
\usepackage[font={footnotesize}]{caption}



\usepackage{lipsum}
\definecolor{darkgreen}{rgb}{0,0.5,0}
\definecolor{darkblue}{rgb}{0,0,0.7}
\definecolor{darkred}{rgb}{0.9,0.1,0.1}



 
 \usepackage{comment}
 
 \includecomment{comment}
 
 \specialcomment{supplementary}
    {\colorlet{savedcolor}{.} \color{blue} \begingroup \ttfamily \bigskip \smallskip  \noindent \underline{Supplementary details:} \newline \newline \footnotesize }{\endgroup \smallskip \color{savedcolor}}
\excludecomment{supplementary}


\newtheorem{proposition}{Proposition}
\newtheorem{theorem}[proposition]{Theorem}
\newtheorem{lemma}[proposition]{Lemma}

\theoremstyle{definition}

\newtheorem{definition}[proposition]{Definition}

\newcommand{\cref}[1]{Corollary~\ref{c.#1}}

\numberwithin{equation}{section}
\numberwithin{proposition}{section}


\newcommand{\A}{\mathcal{A}}
\newcommand{\Ahom}{\overline{\A}}

\newcommand{\Z}{\mathbb{Z}}
\newcommand{\N}{\mathbb{N}}
\newcommand{\R}{\mathbb{R}}

\newcommand{\E}{\mathbb{E}}
\renewcommand{\P}{\mathbb{P}}
\newcommand{\F}{\mathcal{F}}
\newcommand{\Zd}{\mathbb{Z}^d}
\newcommand{\Rd}{{\mathbb{R}^d}}

\newcommand{\ep}{\varepsilon}

\renewcommand{\a}{\mathbf{a}}

\newcommand{\g}{\mathbf{g}}

\renewcommand{\subset}{\subseteq}

\renewcommand{\a}{\mathbf{a}}
\renewcommand{\b}{\mathbf{b}}

\newcommand{\ahom}{{\overbracket[1pt][-1pt]{\a}}}  


\renewcommand{\subset}{\subseteq}

\newcommand{\cu}{{\scaleobj{1.2	}{\square}}}

\renewcommand{\fint}{\strokedint}

\DeclareMathOperator{\dist}{dist}

\DeclareMathOperator*{\esssup}{ess\,sup}

\DeclareMathOperator{\tr}{tr}

\DeclareMathOperator{\spn}{span}

\DeclareMathOperator{\mindiam}{md}

\newcommand{\X}{\mathcal{X}}

\renewcommand{\bar}{\overline}

\renewcommand{\tilde}{\widetilde}

\newcommand{\indc}{\mathds{1}}

\renewcommand{\O}{\mathcal{O}}

\renewcommand{\hat}{\widehat}

\begin{document}

\title[Homogenization of the obstacle problem]{Quantitative homogenization for the obstacle problem and its free boundary}

\begin{abstract}
In this manuscript we prove quantitative homogenization results for the obstacle problem with bounded measurable coefficients.
As a consequence,  large-scale regularity results  both for the solution and the free boundary for the 
 heterogeneous  obstacle problem are derived.

\end{abstract}

\author[G. Aleksanyan]{Gohar Aleksanyan}
\address[G. Aleksanyan]{Department of Mathematics and Statistics, University of Helsinki}
\email{gohar.aleksanyan@helsinki.fi}

\author[T. Kuusi]{Tuomo Kuusi}
\address[T. Kuusi]{Department of Mathematics and Statistics, University of Helsinki}
 \email{tuomo.kuusi@helsinki.fi}

\keywords{stochastic homogenization, large-scale regularity, obstacle problem, free boundary}
\subjclass[2010]{35B27, 35R35, 35B45, 60K37, 60F05}
\date{\today}

\maketitle
\setcounter{tocdepth}{1}
\tableofcontents

\section{Introduction}


%

In this manuscript we investigate the stochastic homogenization of the obstacle problem.  Let~$U \subset \mathbb{R}^d$ be a given Lipschitz domain. Let ~$\varphi \in H^{1}(U)$ be the given obstacle function and let~$g \in H^{1}(U)$ be boundary values, and assume that~$g \geq \varphi$ on~$\partial U$. Assuming that the measurable coefficient field~$\a$ is uniformly elliptic, i.e.,~$I_d \leq \a(x) \leq \Lambda I_d$ for almost every $x \in \R^d$, we consider the following energy 
\begin{equation} \label{J.heter}
 J_\ep[u]= \frac12 \int_U  \a\big( \tfrac{x}{\ep} \big)\nabla u(x) \cdot \nabla u(x) dx,
\end{equation}
and the minimization problem 
\begin{align}  \label{J.heter.min}
\min_{u \in \mathcal{K}}  J_\ep[u]  , \quad  \mathcal{K} := \big\{  u \in g + H_0^1(U) \, : \,  u \geq \varphi   \big\} \,. 
\end{align}
Notice that the convex set~$\mathcal{K}$  is not empty, because we assume that~$g \geq \varphi$ on~$\partial U$. 

\smallskip

The existence  and uniqueness of the minimizer can be shown  via direct methods in the calculus of variation, see for example,~\cite{Fribook}.
Via  variational methods it is easy to verify that the minimizer~$u^\ep$ satisfies the following system of inequalities in a weak sense: 
\begin{align} \label{e.sol}
\left\{
\begin{aligned}
& -\nabla \cdot \a^\ep \nabla u^\ep   \geq  0  
& &
\mbox{ in } U
\\
&
-\nabla \cdot \a^\ep \nabla u^\ep   = 0  
& &
\mbox{ in } \{ u^\ep > \varphi \} \cap U
\\
&  u^\ep \geq  \varphi
&& 
\mbox{ in } U
\\
&  u^\ep = g 
&& 
\mbox{ on } \partial U
.
\end{aligned}
\right.
\end{align}
Here, and in what follows, we denote~$\a^\ep : = \a\big( \tfrac{\cdot}{\ep} \big)$ for small $\ep \in (0,1)$, and assume that~$\a$ is symmetric and satisfies the following uniform ellipticity condition: There exists a constant $\Lambda \in [1,\infty)$ such that, for almost every $x \in \R^d$ and every $\xi \in \R^d$, 
\begin{align} \label{e.ellipticity}
\a(x) \xi  \cdot \xi \geq | \xi|^2 \quad \mbox{and} \quad |\a(x)| \leq \Lambda .
\end{align}
The set~$\varOmega(u^\ep):= U \cap \{ u^\ep>\varphi\}$ is called the \emph{non-coincidence set}, and 
$\varGamma(u^\ep):=\partial \varOmega( u^\ep) \cap U$ is called the  \emph{the free boundary}. The \emph{coincidence} or \emph{contact set} is denoted by $\varLambda(u^\ep):=\{ u^\ep \equiv \varphi \}$.
By~\eqref{e.sol}, the minimizer~$u^\ep$  is a weak supersolution over the whole domain~$U$  and~$u^\ep$ is~$\a^\ep$-harmonic in the non-coincidence set. Observe that the free boundary~$\varGamma(u^\ep)$ depends on the solution~$u^\ep$ and is not known a priori. Moreover, we see that~$\nabla \cdot(\a^\ep \nabla u^\ep) =  \nabla \cdot(\a^\ep \nabla \varphi)$ in the interior of the set~$\{u^\ep =\varphi \}$, while 
$\nabla \cdot(\a^\ep \nabla u^\ep) = 0$ in the non-coincidence set. Hence,  in general, the flux~$\a^\ep \nabla u^\ep$ is discontinuous along the free boundary.

\smallskip

The obstacle problem has numerous applications, from option pricing in financial mathematics to tumor growth problems in biology.  Free  boundary problems of similar nature appear in mathematical modeling of various problems in physics and life sciences. Our contribution is to study these type of problems in heterogeneous, possibly random, medium.

\smallskip

When investigating free boundary problems, we ask two main questions. First, what is the optimal regularity of the solution? Second, what can we say about the regularity of the free boundary? In the context of homogenization, we additionally ask the question whether we can approximate the solution~$u^\ep$ with a solution to the corresponding homogenized obstacle problem as~$\ep \to 0$: Minimize
\begin{equation} \label{J.homog}
J[u]=  \frac 12 \int_U  \ahom \nabla u(x) \cdot \nabla u(x) dx,
\end{equation}
over~$u \in \mathcal{K}$. The symmetric matrix~$\ahom$ is the \emph{homogenized matrix}, which can be identified under suitable deterministic or stochastic \emph{ergodicity} assumption. The exact assumptions on the coefficient field will be  discussed in Subsection~\ref{s.homogenization}.  The subsequent question is if and when we can prove homogenization of the corresponding free boundaries?

\smallskip

Our goal is to give a rather general condition on the coefficient  matrix $ \a$, which allows to treat both \emph{qualitative} and \emph{quantitative} homogenization at the same time. Indeed, we show that under a suitable \emph{sublinearity condition} for the correctors and the flux correctors, described in Section~\ref{s.homogenization}, we obtain homogenization estimates. More precisely, for $m = \lceil - \log_3 \ep  \rceil$ and $\cu_m = ( -\tfrac12 3^m , \tfrac12 3^m )^d$, we let $\phi_{m,e} \in H^1(\cu_m)$ solve the equation  
\begin{align} 
\label{e.corrector.intro}
 \nabla \cdot \a ( e+ \nabla \phi_{m,e}) = 0  \quad \mbox{in } \cu_m,
\end{align}
and then find a $3^m \Z^d$-periodic skew-symmetric ($\mathbf{S}_{ij} = -\mathbf{S}_{ji}$) tensor $\mathbf{S}$ solving
\begin{align}  \label{e.fluxcorrector.intro}
\Delta \mathbf{S}_{m,ij}^{e} = \partial_{x_j} \big( \a ( e+ \nabla \phi_{m,e})\big)_i - \partial_{x_i} \big( \a ( e+ \nabla \phi_{m,e})\big)_j, \quad
\mathbf{S}_{m,ij}^{e} \in H_{\mathrm{per}}^1(\cu_m) ,
\end{align}
and coarsened homogenized coefficients via 
\begin{align} \label{e.abar.intro}
\ahom_m e  = \fint_{\cu_m} \a ( e+ \nabla \phi_{m,e}) .
\end{align}
Our assumption then reads as that there exists a homogenized matrix $\ahom$ and a monotone increasing function $t\mapsto \mathcal{E}(t)$ such that the quantity 
\begin{equation} \label{e.error.intro}
| \ahom_m - \ahom | + 3^{-m} \sum_{k=1}^d \left( \| | \mathbf{S}_m^{(e_k)} |+  |\phi_{m,e_k}|  \|_{\underline{L}^2(\cu_m)} \right)
\leq 
\mathcal{E}(\ep) 
, \quad m := \lceil - \log_3 \ep\rceil 
\end{equation}
is small. Our homogenization estimates are all given in terms of $ \mathcal{E}(\ep)$. Notice that~$ \mathcal{E}(\ep)$ is centered at the origin and it is in fact a function of the coefficient field~$\a$ in~$\cu_m$. We suppress this dependence from the notation. However, one should keep in mind that $ \mathcal{E}(\ep) = \mathcal{E}(\ep , \a |_{\cu_m} )$ and we can thus extend the definition of~$\mathcal{E}(\ep)$ for any given point $x \in \R^d$ using translation by setting 
\begin{align}  \label{e.error.translation}
\mathcal{E}(\ep,x) = \mathcal{E}(\ep , \a(x+\cdot) |_{\cu_m} ).
\end{align}
For example, in the random case  a typical form would be that $\mathcal{E}(\ep,x) = \X(x) \ep^\alpha$ for a stationary random field $x \mapsto \X(x)$ with certain stochastic integrability.  Our goal in this paper, in a nutshell, is to show that \emph{smallness of $\mathcal{E}(\ep)$ implies large-scale regularity in the obstacle problem.} We would like to point out here, however, that we are not aiming at \emph{optimal rates} of homogenization in the obstacle problem, since this would call much more careful analysis. We will not discuss this point any further in this manuscript. 

\smallskip

For the regularity results, we need a stronger, more quantified control of the error. Indeed, for a given parameter~$\nu \in (0,1)$, 
we assume that, for every $s,t \in (0,\infty)$ such that $s \leq t$, we have that 
\begin{align}  \label{e.algebraic}
s^{-\nu}  \mathcal{E} (s) \leq t^{-\nu}  \mathcal{E}(t) 
\, .
\end{align}
In particular, this is satisfied with $\mathcal{E}(\ep)  = \X \ep^\nu$, where $\X$ is a universal constant possibly depending on the realization of the coefficient field $\a$. Under the assumption~\eqref{e.algebraic}, we also define the so called  \emph{minimal scale}: for every $\alpha,\sigma \in (0,1]$ and~$x \in \R^d$, set
\begin{align}  \label{e.minimalscale.intro}
R_{\sigma,\alpha}(x) := \inf \bigl\{ r \in (0,\infty) \, : \, 
\bigl(  \mathcal{E}\bigl( \tfrac{\ep}{r} , x \bigr)\bigr)^\alpha <  \sigma 
\bigr\},
\qquad 
R_{\sigma,\alpha} = R_{\sigma,\alpha}(0)
\,.
\end{align}
This object is fundamental in the regularity theory, especially in the context of the stochastic homogenization, originating from the paper of Armstrong and Smart~\cite{AS} in the random setting, see also~\cite{AKMBook,GNO,GO6}. The original ideas on large-scale regularity go back to seminal papers of Avellaneda-Lin~\cite{AL1,AL2,AL3}. 

\smallskip
Now 
let us proceed to more detailed discussion of the problem under investigation. Let $ V^\ep:=u^\ep -\varphi \geq 0$, then we derive from~\eqref{e.sol} that $ V^\ep \geq 0 $ satisfies the following variational inequality in a weak sense
\begin{equation*}
\nabla \cdot ( \a^\ep \nabla V^\ep) = - \nabla \cdot ( \a^\ep \nabla \varphi) \chi_{\{V>0\}},
\end{equation*}
with corresponding boundary conditions. In order to be able to control the growth  of the solution~$V^\ep$ at free  boundary points, we need some
control over~$\nabla \cdot ( \a^\ep \nabla \varphi)$, which oscillates rapidly  as~$\ep \to 0+$ and is, in general, merely a distribution in~$H^{-1}(U)$. 
To remedy this, we notice that for a given obstacle $\varphi$ and boundary values $g \in H^1(U)$, we obtain by integration by parts that the energy in~\eqref{J.heter} can be rewritten as 
\begin{align}  \label{e.energydifference}
 J_\ep[u] 
 & 
 = 
\int_U \left(  \frac 12 \a^\ep \nabla (u-\varphi) \cdot  \nabla (u-\varphi ) + (u-\varphi ) (-\nabla \cdot \a^\ep \nabla \varphi )\right) 
\\ & \notag
\qquad 
+
\int_U \left(  
(g - \varphi)(-\nabla \cdot \a^\ep \nabla \varphi) 
+  
\a^\ep \nabla \varphi  \cdot  \nabla (g - \varphi)
+
\frac12  \a^\ep \nabla \varphi  \cdot \nabla \varphi
\right) 
 \,.
\end{align}
The second integral on the right is null-Lagrangian for fixed obstacle and boundary data, so letting $ v= u -\varphi \geq 0$,  in fact we obtain that  $ v $ is a nonnegative minimizer of the first integral on the right, with corresponding boundary data $g-\varphi$.  This leads us to consider the so-called \emph{normalized obstacle problem}: For given~$\varphi \in H^{1}(U)$, define~$\varphi^\ep  \in \varphi + H_0^1(U)$ to be the solution of
\begin{equation}
\label{e.obstacles.intro}
-\nabla \cdot \left(\a^\ep  \nabla \varphi^\varepsilon \right) =
-\nabla \cdot  \left(\ahom \nabla \varphi \right):=f \quad \mbox{in } U \,. 
\end{equation}
Letting $ v^\ep=u^\ep -\varphi^\ep \geq 0 $ and $ v =u-\varphi \geq 0$ we derive the following Euler-Lagrange equations from the minimization problem, now with the normalized right-hand side:
\begin{align}
\label{e.normheter.intro}
\nabla \cdot (\a^\ep \nabla v^\ep)=f \chi_{\{ v^\ep>0\} }
\quad \textrm{ and } \quad 
\nabla \cdot (\ahom \nabla v)= f \chi_{\{v>0\}}
\,.
\end{align}
In order to obtain regularity of the free boundary, and large-scale regularity, we  need to  impose more assumptions on~$f$, such as uniform positivity and H\"older continuity. As a matter of fact, in Section~\ref{s.fb} we simplify even further and take $f$ to be a positive constant $\lambda$. There are counterexamples, showing that if we allow~$f=0$ or~$f$ to be merely continuous, even when $ \a$ is a constant matrix, we cannot expect regularity of the free boundary, see~\cite{B01} for further discussion. The normalization described above, and the assumptions on~$f$ are widely used in the context of free boundary problems. In fact in most of the literature  $ f \equiv 1$, since the more general situation can be addressed via  stability properties of the free boundary, see \cite{B01} for details.

\smallskip

In view of~\eqref{e.energydifference}, studying the normalized obstacle problem described above is the same as if we study the minimizer of~\eqref{J.heter.min} over~$\mathcal{K}_\ep := \big\{  u \in g + H_0^1(U) \, : \,  u \geq \varphi^\ep   \big\}$. In other words, we intrinsically add the wild oscillations to the obstacle and only after that study the obstacle problem. The solution to the original obstacle problem~\eqref{J.heter.min}, denoted now by~$u^\ep_\varphi$,  is close to the solution of the obstacle problem with the new obstacle~$\varphi^\ep$.
Indeed, comparison principle yields that 
\begin{align}  \label{e.uep.vs.vep}
\left\|  u^\ep_\varphi - u^\ep \right\|_{L^\infty } 
\leq 
\left\|  \varphi^\ep - \varphi \right\|_{L^\infty } 
= O(\ep),
\end{align}
where the  smallness of $ \left\|  \varphi^\ep - \varphi \right\|_{L^\infty }$ follows from  the homogenization theory, see Lemma~\ref{l.wepandw.close} and Lemma \ref{l.comparison} for details. Furthermore,  the above estimate suggests that the homogenization of  solutions of the  normalized  obstacle problem implies homogenization of solutions to the original obstacle problem~\eqref{J.heter.min}, with a fixed obstacle. By the triangle inequality we have that 
\begin{align}
\notag
\left\|  u^\ep_\varphi - u \right\|_{L^\infty } 
&
 \leq 
 \left\|  v^\ep  - v  \right\|_{L^\infty } +2 \left\|  \varphi^\ep  -\varphi  \right\|_{L^\infty } ,
\end{align}
where $v^\ep$ and $v$ solve equations in~\eqref{e.normheter.intro}. 
Consequently, it suffices to investigate the homogenization of solutions of the normalized obstacle problem. Moreover, observe that the homogenization of the free boundary does not hold when the obstacle is fixed due to wild oscillations of $\a^\ep$ for small $\ep$ even if $\a$ is smooth. This can be seen already in 1D-examples as demonstrated in Appendix~\ref{s.appendix}.
 
 \smallskip

Let us briefly discuss the illuminating examples of Appendix~\ref{s.appendix}. We actually give two examples in the appendix. In Example 2 we show that even if we have a fixed strictly concave smooth obstacle~$\varphi$, then the we may not have homogenization of the free boundary due to rapid oscillations of~$\nabla \cdot (\a^\ep \nabla \varphi)~$as~$\ep \to 0+$, see Figure~\ref{f.oned.example} for numerical illustration. On the contrary, Example 1 shows that the homogenization of the solution and of the free boundary holds for the normalized obstacle problem. Letting~$v^ \ep \geq 0$ and~$v \geq 0$, respectively,  solve 
\begin{align*}
\nabla \cdot (\a^\ep \nabla v^\ep)= \chi_{\{ v^\ep>0\} }
\quad
\textrm{and}
\quad
\nabla \cdot (\ahom \nabla v)= \chi_{\{v>0\}}
\end{align*}
in the interval~$(0, 4)$, and letting~$\alpha^\ep$ and~$\alpha$ be the free boundary points for~$v^\ep$ and~$v$, respectively, we show that then~$v^\ep \to v$ and~$\alpha^\ep \to \alpha$.

\subsection{The  normalized obstacle problem }

Let $ f \in L^q$, $q >d/2$, $ f >0$ a.e., and  consider the minimizer of the  following convex functional
\begin{equation} \label{epsilonfunctional}
J_\varepsilon[v]:= \int_U \frac {1}{2}\nabla v(x) \cdot \a \left(\tfrac{x}{\varepsilon} \right) \nabla v(x) +f(x ) v(x)dx,
\end{equation}
over~$v \in H^{1}(U)$, satisfying~$v \geq 0$ in~$U$,~$ v =g >0$ on~$\partial U$.
There is a unique minimizer~$u^\varepsilon$, which we call  the \emph{heterogeneous solution} to the  normalized obstacle problem.  Then 
$u^\varepsilon$ is a weak solution to
\begin{equation}
\nabla \cdot \left( \a^\ep \nabla u^\varepsilon \right)=f \chi_{\{ u^\varepsilon > 0\}}.
\end{equation}
Let ~$u$ be the minimizer of the corresponding convex functional with the homogenized matrix~$\ahom$;
\begin{equation}
J[v]:= \int_U \frac{1}{2} \nabla v(x) \cdot \ahom \nabla v(x) +f(x) v(x)dx,
\end{equation}
over~$v \in H^{1}(U)$, satisfying~$v \geq 0$ in~$U$,~$ v=g >0$ on~$\partial U$.
Then
\begin{equation}
\nabla \cdot \left( \ahom  \nabla u \right)= f \chi_{\{ u >0\}}.
\end{equation}
The function~$u$ is called the \emph{homogenized solution} to the  normalized obstacle problem.

The corresponding free boundaries for $ u^\ep $ and $ u$ are defined as follows 
\begin{align}
\varGamma(u^\ep) =\partial \{ u^\ep >0\} \cap U 
\quad \mbox{and} \quad
 \varGamma(u) = \partial  \{ u >0\} \cap U.
\end{align}

\smallskip

Concerning the regularity of the minimizer $ u^\ep$, for merely measurable coefficients, the solution is H\"older regular by De Giorgi-Nash-Moser type arguments, see for example ~\cite{GTbook}.
 If coefficients are~$C^{0,\alpha}$ for some~$\alpha \in (0,1]$, then the minimizer is~$C^{1,\alpha}$-regular. 
When the coefficient matrix~$\a$ is constant (or sufficiently smooth), both the regularity of the solution and of the free boundary are well understood. The optimal  regularity of solutions  is due to Frehse, \cite{Frehse}.   When  $f $ is uniformly positive and H\"older continuous, the free boundary regularity is fully investigated by Caffarelli, \cite{Caf77, Caf98}, see also  some recent contributions such as ~\cite{FOS, FS}.
The more general case, when  $f $ is  Dini-continuous, the regularity of the free boundary is derived in see~\cite{ B01}.

\smallskip 

The obstacle problem with sufficiently regular variable coefficients have been studied  using energy methods, see for example~\cite{MP, FGS}. However, there are more challenges appearing when the coefficients are merely measurable. Blank, jointly with collaborators, studied  free boundary problems with rough coefficients,~\cite{BK,BZ1,BZ2}. 
In \cite{BZ1}, the authors  prove quadratic decay  bounds and non-degeneracy at free boundary points assuming that $ \a_{ij}$ and $ f $ are bounded measurable and  uniformly positive, see Lemma \ref{l.BlankHao}. In particular the authors show that the free boundary has  zero Lebesgue measure. In \cite{BK,BZ1,BZ2} the authors discuss the regularity of the free boundary, additionally assuming that $ a_{ij}, f \in VMO$.

\subsection{Summary of main results}

Now let us refer to the results obtained in the current manuscript. We investigate the closeness of $ u^\ep$ and $ u$,  the corresponding free boundaries $\varGamma(u^\ep)$ and $\varGamma(u)$, and derive large-scale regularity results for~$u^\ep$ and~$ \varGamma(u^\ep)$. 

Following \cite{AKMBook}, we denote by $ \overline{\mathcal{A}}_k$ and $ {\mathcal{A}}_k$  respectively the spaces of $\ahom$-harmonic and $\a$-harmonic functions, with growth strictly less than polynomials of degree $k+1$. More rigorously,
\begin{align}  \label{e.Ak.bar}
{\overline{\mathcal{A}}}_k
:=
\bigl\{ u \in H^1_{loc}( \R^d)
\, : \,
-\nabla \cdot (\ahom \nabla u )=0, ~ \lim_{r\to \infty} r^{-k-1} \|u \| _{{\underline{L}^2(B_{r})}}=0
\bigr
\},
\end{align}
and
\begin{align}  \label{e.Ak}
{\mathcal{A}}_k
:=
\bigl\{ u \in H^1_{loc}( \R^d)
\, : \,
-\nabla \cdot (\a \nabla u )=0, ~ \lim_{r\to \infty} r^{-k-1} \|u \| _{{\underline{L}^2(B_{r})}}=0
\bigr
\}.
\end{align}
By the Liouville theorem, $ \overline{\mathcal{A}}_1$ is the space of affine functions.
Under the assumption~\eqref{e.algebraic}, it turns out that ${\mathcal{A}}_1$ can be identified with the limits of $\phi_{m,e}$ in~\eqref{e.corrector.intro} as $m \to \infty$. The dimension of~${\mathcal{A}}_1$ is $d+1$ and it can be thought as a space of ``heterogenous affine functions". This space gives the right concept to measure $C^{1,1}$-regularity of solutions of the obstacle problem.
These type of large-scale regularity results were originally developed in the periodic setting by Avellaneda and Lin in a series of papers~\cite{AL1,AL2,AL3}. As a matter
of fact, a version of the following theorem has been commented in the periodic non-divergence case already in~\cite{AL2}. Our contribution here is to prove the corresponding result also in the stochastic setting. 

\begin{theorem} \label{t.C11}
Let $\nu \in (0,1]$ and assume that $\mathcal{E}$ satisfies~\eqref{e.algebraic}. There exist constants $\alpha,\sigma \in (0,1]$ and $C< \infty$, all depending only on $(\nu,d,\Lambda)$, such that the following claim holds. If $R_{\sigma,\alpha}$ defined in~\eqref{e.minimalscale.intro} satisfies~$R_{\sigma,\alpha}\leq 1$,~$f \in C^{0,\beta}(B_{1})$ and~$u^\ep$ is a weak solution of 
\begin{equation}
\nabla \cdot ( \a^\ep \nabla u^\ep) = f \chi_{\{u^\ep>0\}} \textrm{ in } B_1,
\end{equation}
then there exists $\psi \in \A_1$ such that, for every $r\in  [ R_{\sigma,\alpha},1]$, we have
\begin{equation} \label{e.C11}
\| u^\ep - \psi^\ep  \|_{L^\infty(B_{r})} 
\leq 
Cr^2   \big(\| u^\ep \|_{\underline{L}^2(B_{1})}
+
C \| f \|_{C^{0,
\nu}(B_1)} \big).
\end{equation}
\end{theorem}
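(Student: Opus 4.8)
The plan is to run the standard Avellaneda--Lin / Armstrong--Smart large-scale $C^{1,1}$ iteration, adapted to the obstacle problem, where the natural blow-up class at a free-boundary point is the space $\A_1$ of heterogeneous affine functions (limits of the finite-scale correctors $\phi_{m,e}$) rather than genuine affine functions. The mechanism is a one-step improvement: there is $\theta\in(0,1)$ so that for every $\a^\ep$-solution $w$ of the obstacle equation in $B_r$ with $r\in[R_{\sigma,\alpha},1]$ one can find $\psi\in\A_1$ with
\begin{equation*}
\inf_{\psi'\in\A_1}\ \theta^{-2} r^{-2}\,\|w-\psi'\|_{L^\infty(B_{\theta r})}
\ \leq\ \tfrac12\,\inf_{\psi\in\A_1}\ r^{-2}\,\|w-\psi\|_{L^\infty(B_r)}
\ +\ C r^{-2}\,\big(\osc\text{-type error}\big),
\end{equation*}
after which summing the geometric series across dyadic-in-$\theta$ scales from $r$ up to $1$, together with control of the ``drift'' of the minimizing $\psi$ from scale to scale, yields a single limiting $\psi\in\A_1$ satisfying~\eqref{e.C11}. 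The contribution of $\|f\|_{C^{0,\nu}}$ enters because the obstacle equation has right-hand side $f\chi_{\{u^\ep>0\}}$, which is bounded and (on the non-coincidence set) as regular as $f$; this forces an extra $Cr^2\|f\|_{C^{0,\nu}}$ term at each scale but does not affect the geometric decay of the homogeneous part.

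The three ingredients I would assemble are as follows. First, a homogenization/comparison estimate at a fixed scale: on $B_r$ one compares $u^\ep$ to the solution $u$ of the homogenized obstacle problem $\nabla\cdot(\ahom\nabla u)=f\chi_{\{u>0\}}$ with the same boundary data, and the $L^\infty$ (or $\underline L^2$) distance between them is controlled by $\mathcal E(\ep/r)$ (via the energy-difference identity~\eqref{e.energydifference}, comparison principle~\eqref{e.uep.vs.vep}, and Lemma~\ref{l.wepandw.close}/Lemma~\ref{l.comparison}); by definition of $R_{\sigma,\alpha}$ and the algebraic monotonicity~\eqref{e.algebraic}, this error is $\leq$ a small power of $(r/1)$ times $\sigma$-sized quantities for all $r\geq R_{\sigma,\alpha}$, which is what makes the geometric sum converge. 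Second, the \emph{classical} $C^{1,1}$ regularity at free-boundary points for the \emph{homogenized} obstacle problem with H\"older (uniformly positive is not needed for the $C^{1,1}$-bound itself, only for free-boundary regularity later) right-hand side: $u$ is touched quadratically, so in a ball of radius $\theta r$ it is within $C(\theta r)^2$ of an affine function; one then replaces that affine function by the corresponding element of $\A_1$ using the definition~\eqref{e.abar.intro}–\eqref{e.error.intro} of the correctors, incurring again only an $\mathcal E(\ep/r)$-error. Third, a Caccioppoli/Campanato-type self-improvement bookkeeping to convert the flat decay into the stated bound with the correct dependence on $\|u^\ep\|_{\underline L^2(B_1)}$ and $\|f\|_{C^{0,\nu}(B_1)}$.

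Concretely I would: (i) fix parameters $\alpha,\sigma,\theta$ small depending on $(\nu,d,\Lambda)$; (ii) prove the excess-decay inequality above on a single ball $B_r$ with $R_{\sigma,\alpha}\le r\le 1$, using step-one homogenization to pass to the homogenized obstacle problem, step-two classical $C^{1,1}$ to gain the factor $\theta^2$, and step-one again (correctors) to return to the heterogeneous side; (iii) iterate over $r_k=\theta^k$, $\theta^k\geq R_{\sigma,\alpha}$, tracking $\psi_k\in\A_1$ and showing $\|\psi_{k+1}-\psi_k\|$ is summable so that $\psi_k\to\psi\in\A_1$ (closedness of $\A_1$ under such limits, which holds under~\eqref{e.algebraic}); (iv) sum the errors, each bounded by a decaying geometric term times $\|u^\ep\|_{\underline L^2(B_1)}+C\|f\|_{C^{0,\nu}(B_1)}$, to conclude~\eqref{e.C11} for all $r\in[R_{\sigma,\alpha},1]$. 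The main obstacle I anticipate is step (ii)'s interface between the two worlds: one must transfer the quadratic touching property of the obstacle solution across the homogenization step cleanly — in particular controlling the non-coincidence sets $\{u^\ep>0\}$ and $\{u>0\}$ closely enough that the right-hand sides $f\chi_{\{u^\ep>0\}}$ and $f\chi_{\{u>0\}}$ produce comparable solutions — and to do so with bounds uniform down to the minimal scale. This is exactly where one needs the quadratic growth/non-degeneracy bounds for the heterogeneous obstacle problem (Lemma~\ref{l.BlankHao}), and marrying those deterministic free-boundary estimates with the quantitative homogenization error $\mathcal E(\ep/r)$ is the delicate point; everything else is the by-now-standard large-scale regularity machinery of~\cite{AKMBook}.
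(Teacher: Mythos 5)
The paper takes a genuinely different and simpler route than your proposal, and your route as sketched has a gap that the paper's dichotomy is specifically designed to avoid.

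The paper's proof of Theorem~\ref{t.C11} never compares the heterogeneous and homogenized \emph{obstacle} problems scale by scale. It splits into three cases according to where the free boundary sits. If there is some $R\in[R_{\sigma,\alpha},\tfrac12]$ with a free-boundary point in $B_{2R}$ but none in $B_R$, then the quadratic decay from Lemma~\ref{l.BlankHao} (a purely deterministic estimate, no homogenization) gives $\|u^\ep\|_{L^\infty(B_s)}\leq Cs^2\|f\|_{L^\infty}$ for all $s\geq R$, and inside $B_R$ the function $u^\ep$ solves the \emph{unconstrained} equation $\nabla\cdot(\a^\ep\nabla u^\ep)=f$, so one applies Lemma~\ref{l.auxiliary} (the clean Armstrong--Kuusi--Mourrat large-scale $C^{1,1}$ estimate with a H\"older right-hand side, no obstacle). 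These two pieces glue trivially because the $\psi$ from Lemma~\ref{l.auxiliary} has linear growth. If the free boundary reaches all the way into $B_{R_{\sigma,\alpha}}$, the quadratic decay of Lemma~\ref{l.BlankHao} alone gives~\eqref{e.C11} (with $\psi=0$, say). And if there is no free boundary in $B_1$ at all, Lemma~\ref{l.auxiliary} on $B_1$ suffices. In each branch the obstacle theory (Blank--Hao) and the large-scale elliptic regularity (\`a la~\cite{AKMBook}) are invoked on disjoint regions; there is never a step that needs to compare the non-coincidence sets $\{u^\ep>0\}$ and $\{u>0\}$ or to transfer a quadratic-touching property across the two-scale expansion.

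Your excess-decay iteration, by contrast, compares $u^\ep$ to the homogenized obstacle solution $u$ in every ball and then to an element of $\A_1$. You have correctly identified the critical difficulty yourself: one would have to show, quantitatively and uniformly down to $R_{\sigma,\alpha}$, that the right-hand sides $f\chi_{\{u^\ep>0\}}$ and $f\chi_{\{u>0\}}$ produce solutions whose excesses over $\A_1$ differ only by $\mathcal{E}(\ep/r)$. That is a genuinely delicate statement; in the paper it is the content of the free-boundary Sections~\ref{s.fb} (Proposition~\ref{p.flat.qualitative}, Lemma~\ref{l.Hausdorff}, Lemma~\ref{l.flatnessdecay}, Lemma~\ref{l.Caffarelli80}), and it requires structural hypotheses on the free boundary that Theorem~\ref{t.C11} does not assume. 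In particular your step (ii) cannot be carried out as stated near a singular or ill-behaved free-boundary point. The paper's dichotomy shows that this machinery is simply not needed for the $C^{1,1}$ estimate of the solution: near the free boundary one only needs the quadratic growth bound, which costs nothing; the iteration is only ever run on balls where $u^\ep$ already solves a constraint-free equation.

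In short, you have correctly identified the tools (Blank--Hao quadratic decay, the $\A_1$ comparison with finite-volume correctors, an iteration of~\cite{AKMBook}-type), but you apply them through a scale-by-scale comparison of obstacle problems, which is the harder route used in the paper only for the free-boundary statement (Theorem~\ref{t.fb.regularity}). The missing idea for Theorem~\ref{t.C11} is the case split on the location of the free boundary, which lets the quadratic decay and the clean elliptic $C^{1,1}$ machinery act on separate regimes and bypasses the non-coincidence-set matching problem entirely.
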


\smallskip
Now let us discuss our results on the regularity of the free boundary $\varGamma(u^\ep)$. 
Let $ \lambda >0$, and define  the set of homogeneous half-space solutions to the homogenized obstacle problem $ \nabla \cdot (\ahom \nabla u)=\lambda \chi_{\{u>0\}}$, as follows
\begin{equation}\label{e.homog.blow-up}
\mathbb{H}_\lambda^0:=\left\{  h(x)= \frac{\lambda}{2} \frac{(x \cdot e)_+^2}{e\cdot \ahom e} \, : \,  e \in \partial B_1  \right\}.
\end{equation}
The following theorem is the second main result of this article. 
 \begin{theorem} \label{t.fb.regularity}
Let $\ep\in (0,1)$, $\beta,\vartheta \in (0,\tfrac12)$ and $\lambda > 0$. Let $\nu \in (0,1)$ be such that the condition~\eqref{e.algebraic} holds. There exist constants $\alpha(d,\Lambda)\in (0,1)$, $C(\nu,\beta,\vartheta,\lambda,d,\Lambda)  < \infty$, and $\sigma, r_0\in (0,1)$, $\gamma  \in (0,r_0]$, all depending only on $(\beta,\vartheta,\lambda,d,\Lambda)$, such that the following claim is valid. Let~$u^\ep$ solve the normalized obstacle problem 
\begin{align}  \label{e.thm.sol}
\nabla \cdot (\a^\ep \nabla u^\ep)= \lambda \chi_{\{ u^\ep>0\} }  \quad \mbox{in } B_1, 
\end{align}
 and assume that $0 \in \varGamma(u^\ep)$. Suppose that 
 \begin{align}  \label{e.thm.regular}
\inf_{r \in [\gamma r_0,\gamma^{-1} r_0]}  \frac{|  \varLambda(u^\ep)  \cap B_r|}{|B_r|} \geq \vartheta >0
\end{align}
and that~$R_{\sigma,\alpha} \leq \gamma r_0$, where~$R_{\sigma,\alpha}$ is defined in~\eqref{e.minimalscale.intro}. Then, for every $r \in [R_{\sigma,\alpha},\frac12)$, there exists~$h^{(r)} \in \mathbb{H}_\lambda^0$ such that, for every~$s \in [r,\frac12)$, 
\begin{align}  \label{e.ls.fb.c1beta}
 \| u^\ep - h^{(r)}  \|_{L^\infty(B_s)} 
\leq 
C s^2 \Bigr(  s^{\beta}  \inf_{h \in \mathbb{H}_\lambda^0}\| u^\ep -h \|_{L^2(B_{1})} + \bigl( \mathcal{E}(\tfrac{\ep }{r}) \bigr)^\alpha   \Bigl) 
\,  .
\end{align}
\end{theorem}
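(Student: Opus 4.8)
The plan is to run a large-scale Campanato (excess–decay) iteration that couples, scale by scale, the homogenization estimates for the normalized obstacle problem with the classical free-boundary regularity of Caffarelli \cite{Caf77,Caf98} for the homogenized problem $\nabla\cdot(\ahom\nabla v)=\lambda\chi_{\{v>0\}}$. For $s\in(0,\tfrac12)$ measure flatness by
\begin{equation*}
D(s):=\inf_{h\in\mathbb{H}_\lambda^0}s^{-2}\|u^\ep-h\|_{L^\infty(B_s)},
\end{equation*}
and let $h^{(s)}\in\mathbb{H}_\lambda^0$, with axis $e^{(s)}\in\partial B_1$, be a near-minimizer. Blank--Zheng's quadratic growth and nondegeneracy at the free-boundary point $0$ (Lemma~\ref{l.BlankHao}, \cite{BZ1}) give the crude bounds $c\,s^2\le\sup_{B_s}u^\ep\le C\,s^2$, $D(s)\le C_0$, and the measure-stability statement $|\{0<u^\ep<\delta s^2\}\cap B_s|\le C\,\omega(\delta)|B_s|$ with $\omega(0^+)=0$, valid for all $s$ in the range we work in; the analogous bounds hold for solutions of the homogenized obstacle problem. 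The iteration is carried out on scales in $[R_{\sigma,\alpha},\tfrac12]$: once $\gamma,r_0,\sigma$ are fixed small in terms of $(\beta,\vartheta,\lambda,d,\Lambda)$, the hypothesis $R_{\sigma,\alpha}\le\gamma r_0$ together with \eref{algebraic} and \eref{minimalscale.intro} forces $\bigl(\mathcal{E}(\tfrac\ep s)\bigr)^\alpha$ below any prescribed threshold for all such $s$, and makes the dyadic sum $\sum_{r\le s'\le 1/2}\bigl(\mathcal{E}(\tfrac\ep{s'})\bigr)^\alpha$ controllable by $C\bigl(\mathcal{E}(\tfrac\ep r)\bigr)^\alpha$, which is where \eref{thm.regular} being assumed on a window of scales above $r_0$ is used.

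\emph{One-step improvement.} Fix a small $\theta\in(0,1)$ (depending on $\beta$) and a scale $s$ with $R_{\sigma,\alpha}\le\theta s\le s\le\tfrac12$, and assume $D(s)\le\delta_0$. Rescaling $w(x):=s^{-2}u^\ep(sx)$, which solves $\nabla\cdot(\a^{\ep/s}\nabla w)=\lambda\chi_{\{w>0\}}$ in $B_1$, let $\tilde w$ solve $\nabla\cdot(\ahom\nabla\tilde w)=\lambda\chi_{\{\tilde w>0\}}$ in $B_{3/4}$ with $\tilde w=w$ on $\partial B_{3/4}$. The homogenization estimate for the normalized obstacle problem (obtained from Lemmas~\ref{l.wepandw.close}--\ref{l.comparison} after rescaling) together with the crude bounds yields $\|w-\tilde w\|_{L^\infty(B_{3/4})}\le C\bigl(\mathcal{E}(\tfrac\ep s)\bigr)^\alpha$ for a dimensional $\alpha>0$. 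Feeding this into the Blank--Zheng nondegeneracy and measure stability for both $w$ and $\tilde w$ shows that their coincidence sets differ by a set of measure $\le C\,\omega\bigl((\mathcal{E}(\tfrac\ep s))^\alpha\bigr)$, so the thickness property \eref{thm.regular}, known for $w$ at scale $s$ by the inductive transport, passes to $\tilde w$ with $\vartheta$ replaced by $\tfrac\vartheta2$, and $0$ lies within $C\bigl(\mathcal{E}(\tfrac\ep s)\bigr)^\alpha$ of $\varGamma(\tilde w)$. Then $\tilde w$, after recentering the origin onto $\varGamma(\tilde w)$, is a homogenized solution at a \emph{regular} free-boundary point, to which the quantitative form of Caffarelli's $C^{1,\beta}$ estimate applies: there is $h'\in\mathbb{H}_\lambda^0$ with $\theta^{-2}\|\tilde w-h'\|_{L^\infty(B_\theta)}\le\tfrac12\theta^\beta\inf_{h\in\mathbb{H}_\lambda^0}\|\tilde w-h\|_{L^\infty(B_{1/2})}$, provided the latter infimum is below a dimensional $\delta_0'$. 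Undoing the rescaling and reinserting the homogenization error produces the one-step inequality
\begin{equation*}
D(\theta s)\le\theta^\beta D(s)+C\bigl(\mathcal{E}(\tfrac\ep s)\bigr)^\alpha,\qquad |e^{(\theta s)}-e^{(s)}|\le C\bigl(D(s)+(\mathcal{E}(\tfrac\ep s))^\alpha\bigr),
\end{equation*}
the second inequality recording that the approximating half-space solutions at consecutive scales are close.

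\emph{Reaching the flat regime, iterating, and concluding.} To start the iteration we must first bring $D$ below $\delta_0$, and this is where the thickness hypothesis \eref{thm.regular} is used directly: combining quadratic growth and nondegeneracy with a compactness argument against the homogenized problem — by Caffarelli's classification every global solution obeying a uniform thickness bound is a half-space solution — one obtains at each scale $s\in[R_{\sigma,\alpha},\tfrac12]$ the dichotomy ``$\bigl(\mathcal{E}(\tfrac\ep s)\bigr)^\alpha\ge c(\delta_0)$'' (excluded by the smallness of $\gamma,r_0,\sigma$) or ``$D(\theta s)\le(1-c_1)D(s)+C\bigl(\mathcal{E}(\tfrac\ep s)\bigr)^\alpha$''. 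Since $D\le C_0$, only boundedly many such defect steps occur before $D$ falls below $\delta_0$, after which the geometric improvement above takes over; at the top of the window $D(\tfrac12)\le C\bigl(\inf_{h\in\mathbb{H}_\lambda^0}\|u^\ep-h\|_{L^2(B_1)}+(\mathcal{E}(\ep))^\alpha\bigr)$ by the interior estimates of \tref{C11} and homogenization on $B_1$. Iterating from $\tfrac12$ down to an arbitrary $r\in[R_{\sigma,\alpha},\tfrac12)$ and summing the errors via \eref{algebraic} gives $D(s)\le C s^\beta\bigl(\inf_{h\in\mathbb{H}_\lambda^0}\|u^\ep-h\|_{L^2(B_1)}+(\mathcal{E}(\ep))^\alpha\bigr)+C\bigl(\mathcal{E}(\tfrac\ep r)\bigr)^\alpha$ for dyadic $s\in[r,\tfrac12)$; the companion axis bound shows the $e^{(\theta^k)}$ (along the iteration) are Cauchy, so we may take $h^{(r)}$ to be their limit, and telescoping the axis increments upgrades the dyadic bound to $\|u^\ep-h^{(r)}\|_{L^\infty(B_s)}\le Cs^2\bigl(D(s)+\sum_{r\le s'\le s}(\mathcal{E}(\tfrac\ep{s'}))^\alpha\bigr)$ for all $s\in[r,\tfrac12)$. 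Substituting the bound for $D(s)$ and using $(\mathcal{E}(\ep))^\alpha\le C\bigl(\mathcal{E}(\tfrac\ep r)\bigr)^\alpha$ from \eref{algebraic} yields exactly \eref{ls.fb.c1beta} after relabeling constants.

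The principal obstacle is the \emph{stability of the coincidence set under homogenization}: $L^\infty$-closeness of $u^\ep$ to a homogenized solution does not by itself control the symmetric difference of their contact sets, and without that control neither \eref{thm.regular} nor Caffarelli's classification can be transported to the homogenized problem at each scale. This is resolved by combining the homogenization rate quantitatively with the Blank--Zheng nondegeneracy and quadratic-growth bounds, uniformly over all $s\in[R_{\sigma,\alpha},\tfrac12]$; this is the step where \eref{algebraic}, guaranteeing that $\mathcal{E}(\tfrac\ep s)$ decays through the entire range of scales and sums up to $C\bigl(\mathcal{E}(\tfrac\ep r)\bigr)^\alpha$, is indispensable. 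A secondary, more technical point is the self-consistency of the iteration, i.e.\ verifying that the thickness property genuinely propagates to all smaller scales once the flat regime is entered, which again rests on nondegeneracy.
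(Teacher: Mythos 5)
Your overall architecture --- bootstrap from the density hypothesis to initial half-space flatness, then iterate a one-step improvement that couples Caffarelli's $C^{1,\beta}$ theory for the homogenized problem with the quantitative homogenization error, finally telescoping the axis increments --- is the same as the paper's (Proposition~\ref{p.flat.qualitative}, Lemma~\ref{l.flatnessdecay}, and the iteration in the proof of Theorem~\ref{t.fb.regularity}). But there is a genuine gap in the step that locates a free boundary point of the homogenized comparison solution near the origin, which is precisely what you need before you can ``recenter onto $\varGamma(\tilde w)$'' and invoke Caffarelli's regular-point estimate.

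You claim that, after transferring the contact-set density from $u^\ep$ to $\tilde w$ via measure-stability, ``$0$ lies within $C(\mathcal{E}(\ep/s))^\alpha$ of $\varGamma(\tilde w)$.'' This does not follow. Measure-closeness of $\varLambda(u^\ep)$ and $\varLambda(\tilde w)$ gives no pointwise (Hausdorff) control on the boundaries; nondegeneracy controls $\sup_{B_\tau}\tilde w$ but gives no pointwise lower bound of the form $\tilde w(y)\gtrsim\mathrm{dist}(y,\varGamma(\tilde w))^2$ on $\{\tilde w>0\}$; and the level-strip estimate you invoke is itself a statement local to a free boundary point of the homogenized solution, so using it to locate such a point is circular. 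The scenario that must be ruled out is exactly $B_s\subset\{\tilde w>0\}$ with $\tilde w$ small on the thick contact set of $u^\ep$. The paper's Lemma~\ref{l.Hausdorff} rules this out with a quantitative linear-algebra argument: the positive minimum diameter of $\varLambda(u^\ep)\cap B_{\theta s}$ yields $d$ points $\{y_j\}$ whose rescaled frame $Z$ satisfies $|\det Z|\gtrsim\vartheta^d$, and the smallness of $|\nabla p_0|$ on those points then forces $|\nabla^2 \tilde w(0)|$ to be small, contradicting $\ahom:\nabla^2\tilde w(0)=\lambda$. Your sketch neither states nor replaces this step, and without it your compactness/dichotomy route to ``reaching the flat regime'' is unsound, since Caffarelli's classification of global solutions with uniform thickness is a statement at a free boundary point, which you have not yet produced for $\tilde w$.

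A secondary, structural point: your dichotomy is invoked ``at each scale $s\in[R_{\sigma,\alpha},\tfrac12]$,'' which requires the density hypothesis at all such scales, whereas~\eqref{e.thm.regular} is assumed only on the fixed window $[\gamma r_0,\gamma^{-1}r_0]$. You acknowledge thickness must propagate but only justify this ``once the flat regime is entered,'' leaving the preceding steps uncovered. The paper avoids this chicken-and-egg by designing Proposition~\ref{p.flat.qualitative} so that a \emph{single} application at scale $r_0$ already delivers flatness below the threshold for Lemma~\ref{l.flatnessdecay}, which thereafter needs no thickness hypothesis at all.
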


Motivated by the result of Theorem~\ref{t.fb.regularity}, we may give a definition of a regular free boundary points. Let $\alpha,\sigma,r_0,\gamma$ be from Theorem~\eqref{t.fb.regularity}, 
\begin{definition}[Large-scale regular free boundary point]
\label{d.regularpoint}
Let $\lambda>0$, and ~$u^\ep$ solve 
\begin{align}  \label{e.thm.sol.againone}
\nabla \cdot (\a^\ep \nabla u^\ep)= \lambda \chi_{\{ u^\ep>0\} }  \quad \mbox{in } B_2 \,.
\end{align}
We say that~$x_0 \in \varGamma( u^\ep) \cap B_1$ is a \emph{large-scale regular free boundary point} if 
 \begin{equation} \label{e.regularpoint}
\inf_{s \in [\gamma r,\gamma^{-1} r]}   \frac{| \varLambda(u^\ep) \cap B_s(x_0)|}{|B_s(x_0)|} \geq \vartheta >0 
\quad \mbox{and} \quad 
r \in \big[  \gamma^{-1} R_{\sigma,\alpha}(x_0) , r_0\big] \,.
 \end{equation}
 \end{definition}

\smallskip
 
A few comments are appropriate. First, the heart of the matter is to observe that all constants are independent of $\ep$. This justifies the use of the word ``quantified" in the title. Second, there is no regularity assumed from the matrix $\a$ apart from the uniform ellipticity. The homogenization guarantees that $\mathcal{E}(\tfrac{\ep }{r}) $ can be taken to be small and that it is moreover \emph{quantified}.
In particular, in the periodic setting, $\mathcal{E}(t) = Ct$, so that by taking $r = C \delta^{-1} \ep$, we see that $  \| u^\ep - h  \|_{L^\infty(B_r)}  \leq \delta r^2$ for every $r \in [C \delta^{-1} \ep , c \delta^{1/\beta}]$. One could interpret this as ``large-scale uniqueness of blow-ups" in the sense that the half space solution is fixed for all scales larger than $C\ep$, which implies that also the normal direction of the free boundary is fixed in large-scale regularity sense. Similar types of statements remain valid in the stochastic case, but now the minimal scale~$R_{\sigma,\alpha}$ will be a \emph{random object}.

\smallskip

Finally, there still remains a very interesting open problem, namely, under what condition one would be able to prove (or give a counterexample under some reasonable conditions) that the free boundary would be not only large-scale flat, but flat in the traditional sense. For example, as we were already discussing that in the periodic setting Theorem~\ref{t.fb.regularity} yields large-scale regularity up to scales $C\ep$. Assuming some smoothness of coefficients, Caffarelli's results would imply regularity from $c\ep$ scale on. Thus, the missing range is $[c\ep,C\ep]$. In fact, we suggest the following open problem for future investigation. Assume that the matrix $ \a$ is periodic and sufficiently regular. Can we then obtain uniform regularity results for the free boundary $ \varGamma(u^\ep)$, in a neighborhood of a regular free boundary point?
In this setting, there exist blow-up solutions with respect to $ u^\ep $: for a fixed~$ x_0 \in \varGamma(u^\ep)$ and a subsequence $ r_j \to 0+$ there exists~$ u^\ep_0(x):=\lim_{r_j \to 0+} \frac{u^\ep(r_jx+x_0)}{r_j^2}$, and $ \nabla \cdot \a^\ep(x_0) \nabla u^\ep_0=\lambda \chi_{\{u_0^\ep>0\}}$ in $ \R^d$.
  However, the blow-ups of $ u^\ep $ depend on $ \ep$, and a very careful analysis with respect to both heterogenous and homogeneous  halfspace blow-up solutions will be needed.

\subsection{Organization of the manuscript}

In Section 2 we gather the known homogenization estimates for the Dirichlet problem, and the known regularity theory for the obstacle problem.  An expert in the field  may skip this section.

\smallskip

In Section 3  we obtain a quantitative homogenization result for the solution of the obstacle problem, employing the well-known penalization method for partial differential equations with discontinuous right hand side.  The idea is the following: First we show the closeness of the solution to the obstacle problem to the penalized/regularized problem in the~$H^{1}$-norm. Then we prove that a quantitative homogenization result holds for the regularized problem by following the two-scale expansion argument, and employing the known estimates for first order correctors. In Section 4 we prove large-scale $ C^{1,1}$- regularity for the solution of the obstacle problem with measurable coefficients.
The Section 5 is devoted to the large-scale regularity of the free boundary. 
In the appendix we give two one-dimensional examples, with numerical illustrations, which further justify our assumptions.


 \section{Background theory}
 In this section we gather the known theory of homogenization and of the obstacle problem. It is done for the readers convenience, and an expert in either field can skip this section.

 \subsection{Notations.} 
 
 Throughout the manuscript  $ \R^d$ is
 the $ d$-dimensional Euclidean space and the scalar product is denoted by $ x\cdot y$, for $ x, y \in \R^d$, $ | \cdot |$ is the Euclidean norm. The canonical basis is denoted by $ \{ e_1,...,e_d \}$. The ball with a centre at $ x_0$ and radius $ r >0$ is denoted by $B_r(x_0)$, when $ x_0=0$, we simply write $B_r$, and $ B:=B_1$. We denote, for $m \in \Z$, by $\cu_m$ the triadic cube $\cu_m = (-\tfrac12 3^m , \tfrac12 3^m )^d$.
 
 Let $ U \subset \R^d $, for a scalar valued function $ f : U \to  \R $, we denote the partial derivatives by $ \partial_{x_i} f$. For a vector field $ F:  U \to \R ^d$, we denote the divergence operator
 $$
 \nabla \cdot F:= \sum_{i=1}^d \partial_{x_i} f_i,
 $$
where $ f_i = F \cdot e_i$. The Laplacian of a function $ f$ is denoted by 
$$
\Delta f= \nabla \cdot (\nabla f )=\sum_{i=1}^d \partial_{x_i} ( \partial_{x_i } f).
$$
By $ \nabla^k $ we denote the tensor of the $ k$-th order partial derivatives of $f$.

\subsubsection{Function spaces}
For a natural number $ k \in \N$, and $  U \subset \R^d$ the set of $ k$-times continuously differentiable functions is denoted by   $ C^k$. 
For $ \alpha \in (0,1]$ the H\"older seminorm is the following 
$$ 
[ f ]_{ C^{0,\alpha}(U)}:= \sup_{x, y \in U, x \neq y} \frac{ | f(x)-f(y) |}{ | x-y|^\alpha},
$$
and $ C^{k, \alpha(U) }$ is the H\"older space, with the norm
$$
\| f \|_{ C^{k, \alpha} (U)} :=\sum_{l =0}^k \sup_{ x\in U} | \nabla^l f (x)| + [ \nabla^k f ]_{ C^{0,\alpha}(U)}.
$$

For $ p \in [1, +\infty]$, $L^p(U) $ is the Lebesgue space with the norm
\begin{equation*}
\| f \|_{ L^p(U) } :=\left(  \int _U |f(x)|^p  dx  \right)^\frac{1}{p}, \textrm{ for } p< +\infty, \textrm{ and }
\| f \|_{ L^\infty(U) } := \esssup_{x \in U} |f(x)|.
\end{equation*}

If $ | U| <+\infty$ ($ U $ has a finite Lebesgue measure) then 
$$
(f)_U: =\fint_U   f(x)dx :=\frac{1}{| U|} \int_U f(x) dx,
$$
the average of $f $ on $U$.
We will often use the averaged norms:
\begin{equation*}
\| f \|_{ \underline{L}^p(U) } :=\left(  \fint _U |f(x)|^p  dx  \right)^\frac{1}{p}= 
|U|^{-\frac{1}{p}} \| f \|_{ L^p(U) }, \textrm{ and } \| f \|_{ \underline{L}^\infty(U) } :=\| f \|_{ L^\infty(U) }.
\end{equation*}

The standard notation  $ W^{k,p} (U )$ for Sobolev spaces is used, with the norm
\begin{equation*}
\| f \|_{ {W}^{k,p}(U) }:= \sum_{l=0}^k   \| \nabla^l  f \|_{ {L}^p(U) },
\end{equation*}
and the averaged norm is defined as follows
\begin{equation*}
\| f \|_{ \underline{W}^{k,p}(U) }:= \sum_{l=0}^k  |U |^{\frac{l-k}{d} } \| \nabla^l  f \|_{ \underline{L}^p(U) }.
\end{equation*}
The notation $ W_0^{k,p} (U )$ is used for Sobolev spaces with zero trace. We also use the notation $H^k = W^{k,2}$ for every $k \in \N$.

\subsection{Useful inequalities}

We will also make  use of the following standard interpolation result many times. For the sketch of the proof, see~\cite[Exercise 3.20]{AKMBook}
\begin{lemma} \label{l.interpolation}
Let $\alpha \in (0,1]$ and $r \in (0,\infty)$. Then there exists a constant $C(\alpha,d)<\infty$ such that, for every $u\in C^{0,\alpha}(B_r)$, we have
\begin{align*} 
\| u \|_{L^{\infty}(B_r)} \leq  C \| u \|_{\underline{L}^{2}(B_r)}^{\frac{2\alpha}{d+2\alpha}} \big( r^\alpha [ u ]_{C^{0,\alpha}(B_r)} \big)^{\frac{d}{d+2\alpha}} .
\end{align*}
\end{lemma}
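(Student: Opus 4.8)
The plan is to reduce to the case $r=1$ by a dilation and then run a short localization argument exploiting Hölder continuity. First I would set $u_r(x):=u(rx)$ for $x\in B_1$ and note that $\|u_r\|_{L^\infty(B_1)}=\|u\|_{L^\infty(B_r)}$, $\|u_r\|_{\underline{L}^2(B_1)}=\|u\|_{\underline{L}^2(B_r)}$ and $[u_r]_{C^{0,\alpha}(B_1)}=r^\alpha[u]_{C^{0,\alpha}(B_r)}$, so it suffices to prove the inequality for $r=1$. Abbreviate $M:=\|u\|_{L^\infty(B_1)}$, $K:=[u]_{C^{0,\alpha}(B_1)}$ and $A:=\|u\|_{\underline{L}^2(B_1)}$; we may assume $M>0$ and $K>0$ (if $K=0$ the function is constant and the statement is degenerate). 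The heuristic I would exploit is that, by Hölder continuity, $|u|$ cannot drop below half of a near-maximal value on a ball of radius comparable to $(M/K)^{1/\alpha}$, which confines a definite amount of $L^2$-mass and thereby forces $M\lesssim A^\theta K^{1-\theta}$ with $\theta=\tfrac{2\alpha}{d+2\alpha}$.

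Concretely, I would pick $x_0\in B_1$ with $|u(x_0)|\ge\tfrac12 M$ and set $\rho:=\bigl(\tfrac{M}{4K}\bigr)^{1/\alpha}$. For every $y\in B_\rho(x_0)$ the Hölder bound gives
\[
|u(y)|\ \ge\ |u(x_0)|-K|y-x_0|^\alpha\ \ge\ \tfrac12 M-K\rho^\alpha\ =\ \tfrac14 M .
\]
Next I would invoke the elementary volume estimate $|B_\rho(x_0)\cap B_1|\ge c_d\,\min\{\rho,1\}^d$, valid for every $x_0\in B_1$ with a dimensional constant $c_d>0$ (one simply fits a ball of radius $\tfrac12\min\{\rho,1\}$ into $B_\rho(x_0)\cap B_1$). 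Combining the two yields
\[
A^2\ =\ \fint_{B_1}|u|^2\,dx\ \ge\ \frac{c_d}{|B_1|}\,\min\{\rho,1\}^d\,\Bigl(\tfrac14 M\Bigr)^2 .
\]
In the regime $A\le K$ — equivalently, after rescaling, $M\le 4K$, which in particular holds whenever $u$ vanishes at some point of $B_1$, and which is the case relevant below — one has $\min\{\rho,1\}=\rho$, so substituting $\rho^d=(M/4K)^{d/\alpha}$ gives $A^2\gtrsim M^{2+d/\alpha}K^{-d/\alpha}$, i.e. $M^{(d+2\alpha)/\alpha}\le C\,A^2K^{d/\alpha}$; solving for $M$ produces exactly $M\le C\,A^{\frac{2\alpha}{d+2\alpha}}K^{\frac{d}{d+2\alpha}}$, which is the claim for $r=1$, hence in general by the scaling above. (In the complementary regime $M>4K$ the same computation with $\min\{\rho,1\}=1$ gives the stronger bound $M\le CA$, and this case is not needed below.)

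The argument is essentially routine; the only genuine content is the balancing choice of the radius $\rho$, which fixes the interpolation exponents, and the only point requiring a little care is the volume lower bound $|B_\rho(x_0)\cap B_1|\gtrsim\min\{\rho,1\}^d$ uniformly up to the boundary of $B_1$. An alternative route, of comparable difficulty, is to write $u=u\ast\eta_\delta+(u-u\ast\eta_\delta)$ for a standard mollifier $\eta_\delta$, bound the first term pointwise by $\|\eta_\delta\|_{L^2}\|u\|_{L^2}\lesssim\delta^{-d/2}\|u\|_{L^2(B_r)}$ via Cauchy--Schwarz and the second by $C\delta^\alpha K$, and then optimize over $\delta\in(0,r]$; this has the minor drawback of needing a separate treatment of a $\delta$-neighborhood of $\partial B_r$.
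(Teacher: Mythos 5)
The paper itself does not prove this lemma; it refers to Exercise~3.20 of Armstrong--Kuusi--Mourrat for a sketch, so there is no in-text argument to compare against. Your localization-at-a-near-maximum argument is the standard proof: the scaling reduction to $r=1$ is correct, the volume lower bound $|B_\rho(x_0)\cap B_1|\ge c_d\min\{\rho,1\}^d$ for $x_0\in B_1$ is elementary, and balancing $K\rho^\alpha\sim M$ against $\rho^d M^2\lesssim A^2$ produces exactly the exponents $\tfrac{2\alpha}{d+2\alpha}$ and $\tfrac{d}{d+2\alpha}$.

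Two remarks. The clause ``$A\le K$ --- equivalently, after rescaling, $M\le 4K$'' is a slip: both conditions are homogeneous of degree one in $u$, so a scalar rescaling of $u$ cannot convert one into the other, and the dichotomy your proof actually uses is simply $\rho\le1$ versus $\rho>1$, i.e.\ $M\le 4K$ versus $M>4K$; state it that way. Second, and more substantively, you are right to flag the complementary regime. With the H\"older \emph{seminorm} on the right, the displayed inequality genuinely fails when $M\gg K$: for $u\equiv1$ the right-hand side is $0$, and for $u=1+\delta|x/r|^\alpha$ it is $o(1)$ as $\delta\to0$ while the left-hand side stays bounded below. Your computation shows the sharp statement in that regime is only $M\lesssim A$, which does not imply the displayed bound. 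So the lemma as stated implicitly assumes a normalization --- for instance that $u$ vanishes somewhere in $B_r$, which forces $M\le 2^\alpha K$ --- or else the seminorm should be replaced by the full $C^{0,\alpha}$-norm. This is harmless for the paper, where the lemma is invoked for differences $u^\ep-u$ with matching data on $\partial B_1$, but your proof correctly delimits its range of validity.
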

 
 \smallskip

The following classical result can be found, for example in~\cite{HKM}. 
 \begin{lemma}[Meyers lemma] 
 \label{l.Meyers}
 Fix $ r >0$ and $ p \in (2, \infty)$. Suppose that $ f \in W^{-1,p}$ and that $ u \in H^1(B_r)$ satisfy
 \begin{equation}
-\nabla \cdot ( \a \nabla u) =f \textrm{ in } B_r. 
 \end{equation}
 Then there exists $ \delta(p,d, \Lambda) > 0$ and $ C (p,d, \Lambda) <\infty $ such that 
 \begin{align*}
 \|  \nabla u \|_{ \underline{L}^{ (2+\delta) \wedge p} (B_{r/2})}
 \leq C \left(  \|  \nabla u \|_{ \underline{L}^{ 2} (B_{r})} +
 \|  f \|_{ \underline{W}^{ -1, (2+\delta) \wedge p} (B_{r})}
   \right)
 \end{align*}
 Furthermore, the following global version holds: if $u$ solves 
  \begin{equation}
-\nabla \cdot ( \a \nabla u) =f \textrm{ in } U, 
 \end{equation}
 where $U $ is a Lipschitz domain, and $ u \in g +H^{1}_0(U)$, with $ g \in W^{1,p}(U)$,then 
  \begin{align*}
 \|  \nabla u \|_{ \underline{L}^{ (2+\delta) \wedge p} (U)}
 \leq C \left(  \|  \nabla g  \|_{ \underline{L}^{ 2} (U)} +
 \|  f \|_{ \underline{W}^{ -1, (2+\delta) \wedge p} (U)}
   \right)
 \end{align*}
 \end{lemma}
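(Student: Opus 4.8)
The final statement is the Meyers lemma (Lemma~\ref{l.Meyers}), so I'll propose a proof of it.

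\medskip

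\noindent\textbf{Proof proposal.}
The plan is to deduce both the interior and global statements from the standard self-improving property of the reverse H\"older inequality, following the classical Gehring--Giaquinta--Modica scheme; the reference \cite{HKM} uses exactly this route, so I would reconstruct it. First I would reduce the interior case to the global case applied on concentric balls, using a cutoff, so let me focus on the core estimate. Write $u \in g + H^1_0(U)$ solving $-\nabla\cdot(\a\nabla u) = f = -\nabla\cdot F + h$ with $F \in L^{p}(U;\R^d)$, $h \in L^{p_*}(U)$ (or simply absorb everything into a single vector field when $q > d/2$ via Sobolev embedding, which is the form needed for the applications in the paper). The key step is the \emph{Caccioppoli-type reverse H\"older inequality}: for every ball $B_\rho(x_0)$ with $B_{2\rho}(x_0) \subset U$, testing the equation with $\eta^2(u - (u)_{B_{2\rho}})$ for a standard cutoff $\eta$ and using ellipticity~\eqref{e.ellipticity} together with Young's inequality gives
\begin{equation*}
\fint_{B_\rho(x_0)} |\nabla u|^2 \leq C \Bigl( \fint_{B_{2\rho}(x_0)} |\nabla u|^{2_*} \Bigr)^{2/2_*} + C \fint_{B_{2\rho}(x_0)} |F|^2 ,
\end{equation*}
where $2_* = \tfrac{2d}{d+2}$ is the Sobolev conjugate, via Sobolev--Poincar\'e. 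Near the boundary one uses instead $\eta^2(u-g)$ and the zero trace of $u-g$, which produces the $\|\nabla g\|$ term; this is where the Lipschitz regularity of $U$ enters, to have a Sobolev--Poincar\'e inequality up to the boundary (extend $u - g$ by zero).

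\medskip

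The second step is to invoke Gehring's lemma (the Giaquinta--Modica form with a right-hand side term): an $L^1$ function $\cu \mapsto |\nabla u|^2$ satisfying a reverse H\"older inequality with the lower exponent $2_* < 2$ and inhomogeneity $|F|^2 + |\nabla g|^2$ is automatically in $L^{q}_{loc}$ for some $q = 2 + \delta > 2$, quantitatively, with
\begin{equation*}
\Bigl( \fint_{B_{\rho}} |\nabla u|^{2+\delta} \Bigr)^{1/(2+\delta)} \leq C \Bigl( \fint_{B_{2\rho}} |\nabla u|^{2} \Bigr)^{1/2} + C \Bigl( \fint_{B_{2\rho}} |F|^{2+\delta} \Bigr)^{1/(2+\delta)} ,
\end{equation*}
and $\delta$ depending only on $(d,\Lambda)$. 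Then one takes $\delta$ smaller if necessary so that $2+\delta \leq p$, rewrites $F$ in terms of the $\underline{W}^{-1,p}$-norm of $f$ (decomposing $f = -\nabla\cdot F$ with $\|F\|_{\underline L^p} \lesssim \|f\|_{\underline W^{-1,p}}$, which is precisely the definition of the negative Sobolev norm modulo a harmless lower-order term), and obtains the stated bounds; the global version follows by covering $U$ with a bounded-overlap family of balls (and boundary half-balls) and summing, again using the Lipschitz character of $U$.

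\medskip

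The main obstacle I anticipate is purely bookkeeping rather than conceptual: carefully formulating Gehring's lemma with the inhomogeneous right-hand side and with averaged norms on cubes versus balls, tracking that $\delta$ is universal, and handling the boundary reverse H\"older inequality on a Lipschitz domain (the covering argument near $\partial U$, flattening the boundary or using a Sobolev extension of $u-g$). Since this is a classical result quoted from \cite{HKM} and used only as a black box in the sequel, I would keep the write-up to a sketch: state the reverse H\"older inequality, cite Gehring's lemma in the form of \cite[Chapter~V]{Giaquinta} or \cite[Theorem~6.38]{HKM}, and indicate the boundary modification, rather than grinding through the cutoff computations and the iteration in Gehring's lemma.
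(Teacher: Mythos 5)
The paper gives no proof of this lemma, simply quoting it as classical and citing \cite{HKM}; your sketch follows exactly the standard Gehring--Giaquinta--Modica route (Caccioppoli plus Sobolev--Poincar\'e to obtain a reverse H\"older inequality with inhomogeneity, then Gehring's lemma, with a boundary covering and extension argument for the global statement on a Lipschitz domain), which is the argument one finds in \cite{HKM}. The sketch is correct, and keeping it at the level of a pointer to the classical literature is appropriate here, since the lemma is used in the manuscript only as a black box.
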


\smallskip

 We shall often refer to the following De Giorgi-Nash-Moser theorem.
 \begin{theorem} \label{t.DGNM} (De Giorgi-Nash-Moser, Theorem 8.22 in \cite{GTbook})
 Let~$p >\tfrac{d}{2}$. 
 Let~$U$ be an open set,~$f\in L^p(U)$ and let~$ u \in H^{1}(U)$ solve 
 \begin{equation} \notag
 \nabla \cdot (\a \nabla u)= f\quad \textrm{in } U,
 \end{equation}
 where $\a$ satisfies the uniform ellipticity assumption~\eqref{e.ellipticity}. Then $ u $ is locally a H\"older continuous function in $U$. In particular, there exist constants $C(p, r_0,d, \Lambda) \in [1,\infty)$ and $\alpha=\alpha(p,d,\Lambda) \in (0,1)$ such that if $B_{2r}(x) \subseteq U$ for some~$r>0$ and~$x\in U$, then, for every $t \in (0,r)$, 
 \begin{equation}  \label{e.DGNM}
 \| u \|_{L^\infty(B_r(x)) }
+
 \Bigl( \frac{r}{t}  \Bigr)^\alpha  \| u - u(x) \|_{L^\infty(B_t(x)) } \leq C \left(  \| u \|_{\underline{L}^2(B_{2r}(x)) } + r^2 \| f \|_{\underline{L}^p(B_{2r}(x))}\right).
 \end{equation}
 \end{theorem}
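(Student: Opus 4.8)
The plan is to deduce the precise quantitative inequality~\eqref{e.DGNM} from the classical interior estimates for linear elliptic equations in divergence form, as found in \cite{GTbook}. By a translation and a dilation one reduces to $x=0$ and $r=1$: replacing $u$ by $u(r\,\cdot)$ and $f$ by $r^2 f(r\,\cdot)$ preserves the equation with a rescaled, still uniformly elliptic coefficient field with the same $\Lambda$, and since $p>d/2$ the quantity $r^2\|f\|_{\underline{L}^p(B_{2r}(x))}$ is exactly the scale-invariant object that must appear on the right-hand side; in particular the estimate is scale invariant, so that the constant may in fact be taken independent of $r_0$. It then suffices to establish, on $B_1$, the two bounds
\begin{equation*}
\|u\|_{L^\infty(B_{1/2})} \leq C\bigl(\|u\|_{\underline{L}^2(B_1)} + \|f\|_{\underline{L}^p(B_1)}\bigr)
\end{equation*}
and
\begin{equation*}
\osc_{B_t} u \leq C\, t^{\alpha}\bigl(\|u\|_{L^\infty(B_{1/2})} + \|f\|_{\underline{L}^p(B_1)}\bigr), \qquad t\in(0,\tfrac12),
\end{equation*}
and then to combine them via the trivial inequality $\|u-u(0)\|_{L^\infty(B_t)} \leq \osc_{B_t} u$.

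For the first bound I would invoke the local maximum principle \cite[Theorem 8.17]{GTbook}, applied to $u$ and to $-u$: its proof rests on the Caccioppoli inequality together with a Moser (or De Giorgi) iteration, the right-hand side $f$ entering through the Sobolev embedding precisely in the norm $\|f\|_{L^p}$ with $p>d/2$. For the second bound I would invoke the interior H\"older estimate \cite[Theorem 8.22]{GTbook}, whose proof is based on the weak Harnack inequality for nonnegative supersolutions and the consequent oscillation-decay lemma
\begin{equation*}
\osc_{B_\rho} u \leq \theta\, \osc_{B_{2\rho}} u + C\rho^{2-d/p}\|f\|_{L^p(B_{2\rho})}, \qquad \theta=\theta(d,\Lambda)\in(0,1);
\end{equation*}
iterating this geometric inequality over dyadic scales yields an exponent $\alpha=\alpha(p,d,\Lambda)\in(0,1)$ together with the displayed H\"older bound. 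Assembling the two estimates on $B_1$ and undoing the rescaling gives~\eqref{e.DGNM}.

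The only point demanding a little care is the bookkeeping of the $f$-term under dilation and the conversion between $L^p$ and averaged $\underline{L}^p$ norms, together with checking that the constant in the combined estimate is independent of the inner radius $t$ and of the centre $x$; this is routine, and since both constituent results are entirely classical no genuine obstacle arises. (Should one prefer a self-contained treatment, the same two steps can be carried out directly from the Caccioppoli inequality, a De Giorgi truncation lemma for local boundedness, and an isoperimetric measure-shrinking argument for the oscillation decay, exactly as in the original arguments of De Giorgi, Nash and Moser.)
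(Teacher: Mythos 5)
Your proposal is correct and takes essentially the same route the paper does: the paper itself offers no independent argument for Theorem~\ref{t.DGNM}, but simply points to Gilbarg--Trudinger (Theorem~8.22), which is exactly what you do, supplying in addition the routine scaling reduction to $r=1$, the appeal to the local maximum principle (Theorem~8.17 in the same book) for the $L^\infty$ bound, and the iterated oscillation-decay estimate from the weak Harnack inequality for the H\"older modulus. The two small bookkeeping points you flag are genuine but harmless: to get $\|u\|_{L^\infty(B_1)}$ (rather than $B_{1/2}$) on the left and to cover $t\in[\tfrac12,1)$ one applies Theorem~8.17 on $B_1\subset B_2$ (using $\underline{L}^2(B_2)$ on the right, which rescales to $\underline{L}^2(B_{2r})$ as required) and absorbs the range $t\geq \tfrac12$ into the $L^\infty$ term with a factor $2^\alpha$. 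You are also right that the displayed estimate is scale invariant, so the constant should not actually depend on the parameter $r_0$ appearing in the paper's statement; that dependence is spurious (presumably a leftover from an earlier, non-scale-invariant formulation), and your version with $C=C(p,d,\Lambda)$ is the correct one.
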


\smallskip

\subsection{Homogenization theory, assumptions and known estimates}
\label{s.homogenization}
Before proceeding to the discussion of our problem and main results obtained in the manuscript, let us 
give some basic notations, and a short discussion of  key ideas and concepts in the homogenization theory. A very thorough discussion on many aspects of homogenization as well as  rather comprehensive bibliographical references can be found in the monograph of Jikov, Kozlov \& Ole{\u\i}nik~\cite{JKO}.

\subsubsection{Periodic homogenization}

Possibly the most studied branch of homogenization, so far, is the case of deterministic, periodic coefficients, that is, $\a(x + z) = \a(x)$ for every $z \in \Z^d$ and for almost every $x \in \R^d$. Higher dimensional works start from the  works of De Giorgi \& Spagnolo~\cite{DGS} using $\Gamma$-convergence, followed by direct methods of Tartar, see~\cite{Tartar} and the references therein, and two-scale analysis of Bensoussan \& Lions \& Papanicolaou~\cite{BLP}. For the contemporary analysis we refer to an excellent monograph of Shen~\cite{ShenBook}. In the periodic case, we find~$\Z^d$-periodic correctors~$\phi_{e_k}$ and~$\mathbf{S}^{(e_k)}$ solving~\eqref{e.corrector.intro} and~\eqref{e.fluxcorrector.intro} with $m = 0$, respectively, so that~$\phi_{e_k,m}$ and~$\mathbf{S}_m^{(e_k)}$ are simply periodic extensions of these. In particular, there exists a constant~$C(d,\Lambda) <\infty$ such that, for every~$\ep \in (0,1]$, 
\begin{align}  \label{e.error.periodic}
\mathcal{E}(\ep) = C \ep .
\end{align}
Homogenized matrix can be defined using the flux of the corrector, that is, for every~$e \in \R^d$, 
\begin{align}  \label{e.ahom.periodic}
\ahom e = \int_{\cu_0} \a(x) (e + \nabla \phi_e(x)) \, dx ,
\end{align}
and  we may set $\ahom_m = \ahom$.

\subsubsection{Almost periodic homogenization}

Homogenization of elliptic equations with almost-periodic coefficients was studied first by Kozlov~\cite{K0}. The quantified theory was initialized in~\cite{Sh1} and later on with large-scale regularity and more quantified rates in~\cite{ASh}. The necessary condition for algebraic rate~\eqref{e.algebraic}  is that there exists a constant~$\beta \in (0,1)$ such that, for $R \gg 1$, 
\begin{align*} 
 \sup_{y \in \R^d} \inf_{z \in B_R } \| \a(\cdot+y) - \a(\cdot+z)\|_{L^\infty(\R^d)} 
 \leq  
 R^{-\beta}
\end{align*}
Under stronger quantitative assumptions on $\a$ it is also possible to prove~\eqref{e.error.periodic}, see~\cite{AGK}.

\subsubsection{Stochastic homogenization}

Stochastic homogenization has been under intensive research for the last 10-15 years. Early contributions in qualitative theory were made already earlier, in 1980s, by Yuriinski~\cite{Y0}, Kozlov~\cite{K1} and Papanicolaou \& Varadhan~\cite{PV1}. Let us also mention here the variational methods developed by Dal Maso \& Modica~\cite{DM1,DM2}. The starting point for the quantitative theory, which is also the backbone of this paper, was the unpublished manuscript of Naddaf \& Spencer. The main tools in this paper were  concentration inequalities, such as the Efron-Stein or logarithmic Sobolev inequalities. These ideas were pushed forward by Gloria \& Otto, starting from~\cite{GO1}. They, together with Fischer \& Neukamm and many others~\cite{GNO,FO}, developed  the theory of the quantitative homogenization, see references  in~\cite{GNO2}. Later, a different approach was taken by Armstrong \& Smart~\cite{AS},  pushing much further the variational approach of Dal Maso \& Modica. The important contribution of this paper was that it developed the large-scale regularity following Avellaneda \& Lin in the context of stochastic homogenization. Moreover, these techniques were developed further by Armstrong \& Mourrat~\cite{AKM1,AKM2} and the second author of this paper, and much of the theory is summarized in the monograph~\cite{AKMBook}. See also~\cite{GO6}. 

\smallskip

Let~$U \subset \Rd$ be a Lipschitz domain, and~$p\subset \Rd$, and~$\ell_p(x):=p\cdot x$.
Denote by 
\begin{align}
\mu(U,p):= \inf_{v \in \ell_p+H_0^1(U)} \fint _U \frac{1}{2} \nabla v \cdot \a \nabla v, 
\end{align}
and by~$v(\cdot, U,p)$ the unique minimizer of this energy. Then 
\begin{align}
\notag
-\nabla \cdot ( \a \nabla v(\cdot,U,p))=0 \textrm{ in } U, \quad v = \ell_p \textrm{ on } \partial U.
\end{align}
It is easy to show that the mapping~$p\mapsto \mu(U,p)$ is a positive quadratic form and subadditive with respect to partitions of~$U$. We denote by $\a_m$ the matrix having components 
\begin{equation} \label{e.ahom}
(\a_m)_{ij} 
= 
\fint _{\cu_m} \frac{1}{2} \nabla v(\cdot,\cu_m,e_i) \cdot \a \nabla v(\cdot,\cu_m,e_j)
.
\end{equation}
Under some mild ergodicity assumption, the limit $\ahom = \lim_{m \to \infty} \a_m$ exists almost surely. The matrix~$\ahom~$satisfies the same ellipticity conditions as~$\a$, and it is called as the homogenized matrix.

\smallskip

For every~$m \in \N$ and~$e \in \R^d$, we set
\begin{equation} \label{e.corrector.m}
\phi_{m,e}(x):= v(x,\cu_m,e)-e\cdot x.
\end{equation}
Observe that~$\phi_{m,e}$ satisfies the equation~\eqref{e.corrector.intro} in~$\cu_m$. Having defined the finite volume corrector~$\phi_{m,e}$ and extending it periodically to the whole~$\R^d$, we define the finite volume flux corrector as a~$3^m \Z^d$-periodic function~$\mathbf{S}_m^{e}$ such that its components are, for every~$i,j \in \{1,\ldots,d\}$, unique solutions of the equations
\begin{align} \label{e.fluxcorrector}
\Delta \mathbf{S}_{m,ij}^{e} = \partial_{x_j} \big( \a ( e+ \nabla \phi_{m,e})\big)_i - \partial_{x_i} \big( \a ( e+ \nabla \phi_{m,e})\big)_j, \quad
\mathbf{S}_{m,ij}^{e} \in H_{\mathrm{per}}^1(\cu_m)
\end{align}
Indeed, we see by taking divergence that~$\sum_{j=1}^d \partial_{x_j} \mathbf{S}_{m,ij}^{e} - \big( \a ( e+ \nabla \phi_{m,e})\big)_i$ is a periodic harmonic function in~$\R^d$, and as such it is a constant. Since the equation is invariant under constant addition, we may take that constant to be~$-(\ahom e)_i$. Moreover, we have the following estimate for~$\mathbf{S}_{m,ij}^{e}$, for every~$i,j \in \{1,\ldots,d\}$ and~$e \in B_1$, 
\begin{align} \label{e.fluxcorrector.est1}
\left\| \mathbf{S}_{m,ij}^{e} \right\|_{\underline{L}^2(\cu_m)} \leq C 
\left\| \a ( e+ \nabla \phi_{m,e}) - \ahom_m e \right\|_{\hat{\underline{H}}^{-1}(\cu_m)} 
,
\end{align}
where the weak norm on the right is defined as
\begin{align*} 
\|  f \|_{\hat{\underline{H}}^{-1}(\cu_m)} 
:=
\sup \left\{\fint_{\cu_m} f g \, : \, g \in H^1(\cu_m), \; 3^m \| g \|_{\underline{L}^2(\cu_m) } + \| \nabla g \|_{\underline{L}^2(\cu_m) } \leq 1  \right\}
.
\end{align*}

\smallskip

We next give an example of assumptions in stochastic homogenization, namely the case of finite range dependence. 
Endow~$\Omega$, the set of symmetric matrices with eigenvalues on the interval~$[1,\Lambda]$, with a family of~$\sigma$-algebras~$\left\{ \F_U \right\}$ indexed by the family of Borel subsets~$U\subseteq \Rd$, defined by
\begin{equation}
 \label{e.FU.def}
\begin{aligned}
\F_U & :=  \mbox{the~$\sigma$-algebra generated by the following family:}  \\
& \quad \quad \left\{ \a \mapsto \int_{U} \a_{ij} (x) \varphi(x)\,dx \, : \, \varphi \in C^\infty_c(U), \ i,j\in \{1,\ldots,d\} \right\}.
\end{aligned}
\end{equation}
The largest of these~$\sigma$-algebras is denoted~$\F:= \F_{\Rd}$. For each~$y\in\Rd$, we let~$T_y:\Omega \to\Omega$ be the action of translation by~$y$,
\begin{equation} 
\label{e.Ty}
\left( T_y\a\right)(x):= \a(x+y),
\end{equation}
and extend this to elements of~$\F$ by defining~$T_y E:= \left\{ T_y\a\,:\, \a\in E \right\}$. Assume that the probability measure~$\P$ on the measurable space~$(\Omega,\F)$ satisfy the following two assumptions. 

\smallskip

\begin{itemize} 

\item \emph{Stationarity with respect to~$\Zd$-translations:}
\begin{equation} 
\label{e.ass.stationarity}
\P \circ T_z = \P \quad \mbox{for every} \ z\in\Zd. 
\end{equation}

\item \emph{Unit range of dependence}. 
\begin{equation}
\label{e.ass.independence}
\begin{aligned}
& \F_U \ \mbox{and} \ \F_V \ \  \mbox{are~$\P$-independent for every pair~$U,V\subseteq \Rd$}  
\\
& \mbox{of Borel subsets satisfying~$\dist(U,V)\geq 1$.}
\end{aligned}
\end{equation}

\end{itemize}

We denote the expectation with respect to~$\P$ by~$\E$. That is, if~$\X:\Omega \to \R$ is an~$\F$-measurable random variable, we write
\begin{equation} 
\label{e.E.def}
\E \left[ \X \right] := \int_\Omega \X(\a) \, d\P(\a). 
\end{equation}
While most of the objects in our study are functions of~$\a\in\Omega$, we do not typically display this dependence explicitly in our notations. 
For a random variable~$\X$ and parameters~$s, \theta \in  (0, \infty)$, we use the abbreviation 
\begin{equation}
\X \leq \O_s(\theta) 
\qquad 
\Longleftrightarrow
\qquad 
\E \left[ \exp(\theta^{-1} \X_+)^s \right] \leq 2 .
\end{equation}
Now, following~\cite{AKMBook}, one can show that under these assumptions, for $s \in (0,d)$, there is a constant $C(s,d,\Lambda)$, a random variable $\X$ satisfying $\X \leq \O_s(C)$ and an exponent $\gamma(d,\Lambda) \in (0,1]$ such that
\begin{align*} 
\mathcal{E}(\ep) \leq ( \X \ep )^\gamma    .
\end{align*}
There are of course other mixing conditions allowing the above estimate for the sublinearity of the correctors, see for example~\cite{GO6}.

\smallskip

We will conclude the discussion about homogenization with an important remark. In the random setting, $\mathcal{E}(\ep)$ itself is a random object. Under stationarity assumptions, it has its stationary extension. Notice that our results are all centered at the origin. When considering the translations of these results, which are of course trivial in the case of Laplacian or periodic coefficients, one has to be careful in the random setting; the minimal scale and $\mathcal{E}(\ep)$ will change, in general, from point to point, see~\eqref{e.error.translation}.


\subsection{The obstacle problem}
\label{s.classics}
First we give a brief summary of the classical obstacle problem, following  L. A. Caffarelli (\cite{Caf77,Caf98}).
The main purpose here is to describe the main tools and techniques in investigation of the regularity of the free boundary.

\smallskip
As before~$U \subset \mathbb{R}^d$ is  a given Lipschitz domain  and~$\varphi \in H^{1}(U)$ is the obstacle.  Then the minimizer to the following functional 
\begin{equation} \label{J}
 J[u]= \int_\Omega | \nabla u(x) |^2 dx,
\end{equation}
over all functions~$u=g~$,~$\varphi < g$ on~$\partial U$,   such that~$u\geq \varphi$ in~$U$,  is called the solution to the 
obstacle problem with obstacle~$\varphi$. The solution
satisfies the following variational
inequalities
\begin{equation} \label{varineq}
\Delta u\leq 0, ~ u \geq \varphi, ~ \Delta u \cdot (u-\varphi)=0.
\end{equation}
The following set~$\varOmega(u):= U \cap \{ u>\varphi\}$ is called the non-coincidence set, and 
$\varGamma(u):=\partial \Omega(u) \cap U$ is called the free boundary. 

It is well known that the solution to the obstacle problem is as regular as the obstacle up to~$C^{1,1}$, \cite{Frehse}.The solution~$u$ cannot obtain a better regularity, since~$\Delta u$ is discontinuous on~$\varGamma(u)$. 
 Let~$v:= u-\varphi  \geq 0$, then $\Delta v =- \Delta \varphi \chi_{ \{ v >0 \}}$. Denoting by $ f:=- \Delta \varphi \geq 0$, we derive the 
 normalized obstacle problem
\begin{equation}
\Delta v =f  \chi_{ \{ v >0 \}}.
\end{equation}

It is clear that the properties of the normalized obstacle problem above can be easily adapted  for the following obstacle problem,  $\nabla (\ahom \nabla \bar{u} )=f \chi_{\{\bar{ u} >0\}} $, with the homogenized matrix $ \ahom$.

\begin{theorem} \label{t.Frehse} [Frehse,\cite{Frehse}]
Let $ \bar{u} \geq 0$ be the solution to the obstacle problem $  \nabla (\ahom \nabla \bar{u} )=f \chi_{\{\bar{ u} >0\}}$ in a domain $ U$.
Assume that $B_{2r}(z) \subset U$ and there exists a
$ \psi \in C^{1,1}$, such that $ \nabla (\ahom \nabla \psi )=f$, then 
\begin{align} \label{e.optreg}
\| D^2 \bar u \|_{L^\infty(B_{r})} \leq C(d, \Lambda) \left( \frac{\| \bar u \|_{L^\infty(B_{2r})}}{r^2} + \| D^2 \psi \|_{L^\infty (B_{2r})}\right), 
\end{align}
In particular,  if $f \in C^{0,\beta}(B_{2r}(z))$, then there exists a constant $C(\beta,d,\Lambda)< \infty$ such that
\begin{align} \label{e.optregH}
\| D^2 \bar u \|_{L^\infty(B_{r})} \leq C(\beta, d, \Lambda) \left( \frac{\| \bar u \|_{L^\infty(B_{2r})}}{r^2} +\| f \|_{L^\infty(B_{2r})} + r^\beta [ f ]_{C^{0, \beta}(B_{2r})}\right).
\end{align}

\end{theorem}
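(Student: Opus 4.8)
The plan is to reduce to the Laplacian, then to a quadratic growth estimate at free boundary points, and prove that last step by a compactness/blow-up argument. Since $\ahom$ is a constant symmetric matrix with spectrum in $[1,\Lambda]$, the affine change of variables $y\mapsto\ahom^{1/2}y$ turns $\nabla\cdot(\ahom\nabla\cdot)$ into $\Delta$, preserves the structure of $u$ and $\psi$, and distorts balls and second-derivative norms only by factors controlled by $\Lambda$; so it suffices to prove \eqref{e.optreg} for $\Delta u=f\chi_{\{u>0\}}$ and $\Delta\psi=f$. The estimate is local and invariant under $u\mapsto r^{-2}u(r\,\cdot)$, $\psi\mapsto r^{-2}\psi(r\,\cdot)$, so after translating $z$ to the origin and rescaling I may assume $r=1$, $B_2\subset U$, $\|u\|_{L^\infty(B_2)}+\|D^2\psi\|_{L^\infty(B_2)}\le 1$, and the goal becomes $\|D^2u\|_{L^\infty(B_1)}\le C(d,\Lambda)$. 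The $C^{0,\beta}$ version \eqref{e.optregH} then follows from the general one by taking $\psi$ to solve $\nabla\cdot(\ahom\nabla\psi)=f$ in $B_{2r}(z)$ with zero boundary data, using classical interior Schauder estimates $\|D^2\psi\|_{L^\infty(B_{3r/2})}\le C(\|f\|_{L^\infty(B_{2r})}+r^\beta[f]_{C^{0,\beta}(B_{2r})})$ together with the De Giorgi--Nash--Moser bound on $\|\psi\|_{L^\infty}$ from Theorem~\ref{t.DGNM}.

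The core reduction is to quadratic growth of $u$ at free boundary points:
\begin{equation*}
\sup_{B_\rho(x_0)}u\le C\rho^2 \quad\text{whenever } x_0\in\varGamma(u),\ B_{8\rho}(x_0)\subset B_2.
\tag{$\star$}
\end{equation*}
Granting $(\star)$, one recovers the full $D^2$-bound by a patching argument. First, $u\in W^{2,p}_{\mathrm{loc}}$ for every $p<\infty$ by Calder\'on--Zygmund theory since $\Delta u=f\chi_{\{u>0\}}\in L^\infty$, and $u$ is continuous by Theorem~\ref{t.DGNM}; hence $D^2u=0$ a.e.\ on the coincidence set $\{u=0\}$ (a general property of $W^{2,1}$ functions). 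For a.e.\ $y\in B_1$ with $u(y)>0$ put $d:=\dist(y,\varGamma(u))$. If $d$ is comparable to $1$, compare $u$ on a fixed ball around $y$ inside $\{u>0\}$ with the second-order Taylor polynomial of $\psi$ at $y$; if $d$ is small, do the same on $B_d(y)$ using the Taylor polynomial of $\psi$ at the nearest free boundary point $x_0'$. In either case $w:=u-(\psi-\text{that polynomial})$ is $\ahom$-harmonic on that ball, so interior derivative estimates for constant-coefficient elliptic equations give $|D^2w(y)|\le C\rho^{-2}\osc_{B_\rho(y)}w$, and $\osc_{B_\rho(y)}w$ is bounded by $(\star)$ and by $\rho^2\|D^2\psi\|_{L^\infty}$; adding back $D^2\psi$ yields $|D^2u(y)|\le C(d,\Lambda)$, which is \eqref{e.optreg}.

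It remains to prove $(\star)$, and this is where I expect the real work. I would argue by compactness and contradiction. If $(\star)$ fails, there are solutions $u_k\ge0$ of $\Delta u_k=f_k\chi_{\{u_k>0\}}$ in $B_1$ with $\|u_k\|_{L^\infty(B_1)}\le1$, $\|f_k\|_{L^\infty(B_1)}\le1$, $0\in\varGamma(u_k)$, and radii $r_k\to0$ with $r_k^{-2}\sup_{B_{r_k}}u_k\to\infty$. The device that breaks the apparent circularity in the normalization is to pass to the scale $\rho_k\in[r_k,\tfrac12]$ at which $\rho\mapsto\rho^{-2}\sup_{B_\rho}u_k$ is \emph{maximal}; setting $m_k:=\rho_k^{-2}\sup_{B_{\rho_k}}u_k$ one has $m_k\to\infty$ and $\rho_k\to0$ (otherwise $m_k$ would stay bounded). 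Then the rescalings $v_k(x):=u_k(\rho_kx)/\sup_{B_{\rho_k}}u_k$ satisfy $v_k\ge0$, $v_k(0)=0$, $\sup_{B_1}v_k=1$, $\Delta v_k=m_k^{-1}f_k(\rho_k\,\cdot)\chi_{\{v_k>0\}}$, and, crucially by maximality of $\rho_k$, $\sup_{B_R}v_k\le R^2$ for every $R\le\tfrac1{2\rho_k}$. These uniform local bounds together with $\|\Delta v_k\|_{L^\infty_{\mathrm{loc}}}\le m_k^{-1}\to0$ give uniform $W^{2,p}_{\mathrm{loc}}$, hence $C^{1,\alpha}_{\mathrm{loc}}$, bounds, so a subsequence converges in $C^1_{\mathrm{loc}}$ to some $v_\infty\ge0$ that is harmonic (the right-hand side disappears in the limit), vanishes at $0$, and has $\sup_{B_1}v_\infty=1$; the strong maximum principle then forces $v_\infty\equiv0$, a contradiction. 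The main obstacle is precisely setting up this blow-up cleanly --- selecting the maximal scale so that the rescaled functions are a priori locally bounded, which is what makes the limiting harmonic function well defined and the contradiction possible. If explicit constants are wanted, $(\star)$ can instead be obtained by a dyadic dichotomy for $\sup_{B_{2^{-j}}(x_0)}u$ based on comparison with the $\ahom$-harmonic replacement on each dyadic annulus, but the compactness route is shorter to describe.
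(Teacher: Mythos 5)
The paper does not actually prove this statement --- it cites Frehse and refers to \cite{PSU}, noting that \eqref{e.optregH} follows from \eqref{e.optreg} by Schauder estimates --- so your blind proposal is supplying an argument the paper omits. Your overall strategy (reduce to the Laplacian by the affine change $\ahom^{1/2}$, rescale to $r=1$, derive the interior $D^2$-bound from the quadratic-growth estimate $(\star)$ at free boundary points plus $D^2u=0$ a.e.\ on $\{u=0\}$, and prove $(\star)$ by a ``maximal-scale'' blow-up/compactness argument that produces a nontrivial entire nonnegative harmonic function vanishing at the origin) is a correct and standard route. The maximal-scale selection is exactly the right device for making the rescaled sequence locally bounded, and the contradiction with the strong minimum principle goes through once one notes $\sup_{\overline{B_1}}v_\infty=1$ follows from local uniform convergence. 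The derivation of \eqref{e.optregH} from \eqref{e.optreg} via a local solution $\psi$ plus interior Schauder is the same observation the paper makes. Compared to the constructive dyadic argument in \cite{PSU}, your compactness proof is shorter but does not give an explicit constant; you correctly flag this trade-off.

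One genuine slip, though a local and fixable one: in the patching step you set $w:=u-(\psi-P)$ with $P$ the \emph{second-order} Taylor polynomial of $\psi$ at $y$ (or at $x_0'$) and claim $w$ is $\ahom$-harmonic. This is false in general: since $\nabla\cdot(\ahom\nabla u)=f=\nabla\cdot(\ahom\nabla\psi)$ in $\{u>0\}$, one gets $\nabla\cdot(\ahom\nabla w)=\nabla\cdot(\ahom\nabla P)=\tr(\ahom D^2\psi(y))=f(y)$, a nonzero constant. Either take $P$ to be the \emph{first-order} (affine) Taylor polynomial of $\psi$, so that $w=u-\psi+P$ is genuinely $\ahom$-harmonic, and control $\osc_{B_\rho(y)}(\psi-P)\le C\rho^2\|D^2\psi\|_{L^\infty}$ using only $\psi\in C^{1,1}$; or keep $P$ of second order but then use the interior estimate for constant-Laplacian functions, $|D^2w(y)|\le C\rho^{-2}\osc_{B_\rho(y)}w+C|\nabla\cdot(\ahom\nabla w)|$, and absorb the extra constant into $\|D^2\psi\|_{L^\infty}$. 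A second, smaller point: $D^2\psi$ is defined only a.e.\ for a $C^{1,1}$ function, so the Taylor polynomial at a generic point must be taken at a.e.\ $y$, or one should just use the affine polynomial to avoid the issue entirely. Neither remark undermines the proof; your conclusion \eqref{e.optreg} follows once these are adjusted, and \eqref{e.optregH} follows as you describe.
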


The proof of~\eqref{e.optreg} can be easily found, for instance in \cite{PSU}, and~\eqref{e.optregH} follows by the Schauder estimates.

\smallskip
The regularity of the free boundary~$\varGamma(u)$  has been well studied in the literature \cite{Caf77, Caf98}. It is customary to take~$f \equiv 1$, since the more general case~$f \geq c>0$, under regularity assumptions on~$f$, can be investigated following similar techniques, \cite{B01}.
Fix~$x_0 \in  \varGamma(u)$, then the solution 
has a quadratic decay at~$x_0$
\begin{equation} \label{quadratic}
\sup_{B_r(x_0)} u\leq C r^2.
\end{equation}
On the other hand for any~$x_0 \in \overline{ \{ u>0 \} } \cap U$, and any~$B_r(x_0) \subset U$,
\begin{equation} \label{nondeg}
\sup_{B_r(x_0)} u\geq c r^2,
\end{equation}
which is the non-degeneracy property.

Denote by 
\begin{equation*}
u_{r,x_0}:=\frac{u(x_0 +rx )}{r^2},
\end{equation*}
then~$u_{r_j, x_0} \to u_0$ up to a subsequence~$r_j \to 0+$. The limit function~$u_0$ is a global solution to the obstacle problem~$\Delta u_0=\chi_{\{ u_0>0\}}$, and is called a \emph{blow-up solution}. We can understand the structure of the free boundary at~$x_0$ by analyzing the blow-up limits at that point.

\smallskip
\begin{theorem}\label{t.Caffarelli.dichotomy} [Caffarelli~\cite{Caf77,Caf98}]
 Let $ \bar{u}$  be the solution to the obstacle problem $  \nabla (\ahom \nabla \bar{u} )= \chi_{\{\bar{ u} >0\}}$, and $ x_0$ be a free boundary point. Then the blow-up of~$\bar{u}$ at~$x_0$, $u_0$, is unique, and there are two possibilities. Either~$u_0$ is a half-space solution, that is, 
 \begin{equation} \label{regularpoint}
  u_0(x)=\frac{(x\cdot e )_+^2}{2 e \cdot \ahom e}, \textrm{ where } e \in \mathbb{R}^d \textrm{ is a unit vector},
 \end{equation}
 and~$x_0$ is a called a regular free boundary point. In a neighborhood of each regular free boundary point~$\varGamma(u)$ is an analytic surface.
 Otherwise,~$u_0$ is a second order nonnegative polynomial,
 \begin{equation}
  u_0(x)=\frac{(x\cdot A x )}{2 \tr(\ahom A)}, \mbox{ where 
  $A \in \R^{d \times d}$ is a nonnegative symmetric matrix.} 
 \end{equation}
In this case,~$x_0$ is called a singular free boundary point, and all such points are lying on a lower dimensional~$C^1$-manifold.
  \end{theorem}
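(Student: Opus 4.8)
The plan is to follow Caffarelli's classical strategy, reducing first to the model problem and then reading off the structure of the free boundary from the blow-up limits at a free boundary point $x_0$. Setting $w(y):=\bar u(\ahom^{1/2}y)$ one checks that $\Delta w=\chi_{\{w>0\}}$, so $w$ solves the normalized obstacle problem for the Laplacian; moreover the two candidate profiles in the statement are exactly the push-forwards under $x\mapsto\ahom^{1/2}y$ of the half-space solution $\tfrac12((y\cdot\nu)_+)^2$ and of a nonnegative quadratic polynomial $\tfrac12\,y\cdot By$ with $B\ge0$ symmetric and $\tr B=1$, the normalization constants $2\,e\cdot\ahom e$ and $2\,\tr(\ahom A)$ appearing precisely from this linear change of variables. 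It therefore suffices to analyze $\Delta w=\chi_{\{w>0\}}$ and transport the conclusions back at the end.

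The next step is to record the standard ingredients and build the blow-ups. By Theorem~\ref{t.Frehse}, $w\in C^{1,1}_{\mathrm{loc}}$, which gives the quadratic growth \eqref{quadratic}; nondegeneracy \eqref{nondeg} follows by comparing $w$ with $x\mapsto\tfrac1{2d}|x-z|^2$ on the non-coincidence set, where $\Delta w=1$. Hence the rescalings $w_{r,x_0}$ are bounded in $C^{1,1}_{\mathrm{loc}}(\R^d)$ and, along any sequence $r_j\to0^+$, converge in $C^1_{\mathrm{loc}}$ (up to a subsequence) to a global solution $w_0\ge0$ of $\Delta w_0=\chi_{\{w_0>0\}}$ with $w_0(0)=0$, $\nabla w_0(0)=0$ and the same growth and nondegeneracy. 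I would then invoke the Weiss monotonicity formula: for $x_0\in\varGamma(w)$ the quantity
\[
  W(r):=\frac1{r^{d+2}}\int_{B_r(x_0)}\bigl(|\nabla w|^2+2w\bigr)-\frac2{r^{d+3}}\int_{\partial B_r(x_0)}w^2
\]
is nondecreasing in $r$ and is constant on an interval if and only if $w$ is $2$-homogeneous about $x_0$; passing to the limit this forces every blow-up $w_0$ to be $2$-homogeneous. A separate classification lemma then shows that a $2$-homogeneous nonnegative global solution is either $\tfrac12((x\cdot\nu)_+)^2$ for a unit vector $\nu$, or a nonnegative quadratic polynomial $\tfrac12\,x\cdot Ax$ with $A\ge0$ and $\tr A=1$, the two cases being distinguished according to whether the coincidence set $\{w_0=0\}$ has nonempty interior, with convexity of the coincidence set together with nondegeneracy used to rule out every intermediate configuration.

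To get the dichotomy I would note that $W(0^+):=\lim_{r\to0^+}W(r)$ equals the Weiss energy of any blow-up and takes a fixed value $\omega_d$ on half-space profiles, strictly larger than the value on any polynomial profile; thus the profile type at $x_0$ does not depend on the subsequence. When $W(0^+)=\omega_d$ (the regular case), the uniqueness of the blow-up and the regularity of $\varGamma(w)$ near $x_0$ come from Caffarelli's directional-monotonicity argument: if one blow-up equals $\tfrac12((x\cdot e_0)_+)^2$, then $\partial_\tau w\ge0$ near $x_0$ at sufficiently small scales for all $\tau$ in a spherical cap about $e_0$, which forces $\{w=0\}$ to be locally the subgraph of a Lipschitz function in the direction $e_0$; the blow-up is then unique, $\varGamma(w)$ is a Lipschitz graph, and a boundary-Harnack bootstrap followed by the partial hodograph--Legendre transform and elliptic (Schauder, then analytic) regularity upgrades it to an analytic surface. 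When $W(0^+)<\omega_d$ (the singular case) I would use Monneau's monotonicity formula to obtain uniqueness of the polynomial blow-up $\tfrac12\,x\cdot Ax$; the integer $k:=\dim\ker A\in\{0,\dots,d-1\}$ is upper semicontinuous along $\varGamma(w)$, and a Whitney extension / implicit function argument shows that the singular points with a given value of $k$ lie on a $k$-dimensional $C^1$ manifold, so all singular points are contained in a countable union of $C^1$ manifolds of dimension at most $d-1$. Undoing the change of variables $y\mapsto\ahom^{1/2}y$ then yields the theorem.

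The main obstacles are the two non-soft steps: Caffarelli's directional-monotonicity argument, which is what actually converts the qualitative information ``some blow-up is a half-space solution'' into Lipschitz (hence $C^1$, hence analytic) regularity of the free boundary at regular points, and the uniqueness of the polynomial blow-up at singular points via the Monneau formula together with the Whitney-type stratification. By contrast, the Weiss monotonicity formula, the $C^{1,1}$ compactness of blow-ups, and the classification of $2$-homogeneous global solutions are comparatively routine.
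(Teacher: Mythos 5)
The paper does not prove this theorem: it is stated in Section~2 as background material, attributed to Caffarelli~\cite{Caf77,Caf98}, and used as a black box in the subsequent analysis. There is therefore no internal proof to compare your sketch against. Your outline is a correct and standard \emph{modern} account of the dichotomy, built on the Weiss and Monneau monotonicity formulas; these postdate the cited references, so you are already taking a different route than Caffarelli's original arguments (which relied on direct comparison arguments, the ACF monotonicity formula, and convexity of the coincidence set rather than on homogeneity forced by Weiss's formula). The reduction to the Laplacian by $w(y)=\bar u(\ahom^{1/2}y)$ and the pushforward computation identifying the two model profiles and their normalization constants $e\cdot\ahom e$ and $\tr(\ahom A)$ are exactly right, as is the overall structure (directional monotonicity and hodograph--Legendre transform for regular points, Monneau plus Whitney extension for the stratification of the singular set).

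One factual slip should be corrected. You assert that the limiting Weiss density $W(0^+)$ takes a value $\omega_d$ on half-space profiles that is \emph{strictly larger} than its value on polynomial profiles; it is the other way around. For a $2$-homogeneous global solution $u_0$ one has, integrating $|\nabla u_0|^2$ by parts and using $x\cdot\nabla u_0=2u_0$ on $\partial B_1$,
\begin{equation*}
W[u_0] \;=\; \int_{B_1}|\nabla u_0|^2 + 2\int_{B_1}u_0 - 2\int_{\partial B_1}u_0^2 \;=\; \int_{B_1} u_0 \,.
\end{equation*}
This gives $\alpha_d:=\tfrac12\int_{B_1}(x_d)_+^2$ for the half-space profile and $\tfrac12\int_{B_1}x\cdot Ax = 2\alpha_d$ for any nonnegative quadratic $\tfrac12\,x\cdot Ax$ with $\tr A=1$. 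So regular points correspond to the \emph{smaller} Weiss density $\alpha_d$ and singular points to $2\alpha_d$. The logic of your dichotomy step -- that $W(0^+)$ is a well-defined quantity separating the two blow-up types, so the profile type does not depend on the extracted subsequence -- is unaffected, but the stated inequality should be reversed.
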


 \smallskip
 
\subsubsection{The divergence form obstacle problem  with measurable coefficients}
Now we consider the following obstacle problem 
\begin{equation}
\label{e.hetereqagain}
\nabla \cdot (\b \nabla u)= f \chi_{ \{ u>0\}}, \qquad  f \geq \lambda > 0,
\end{equation}
where~$\b$ is an elliptic ($I_d \leq \b \leq \Lambda I_d$) and symmetric matrix, with measurable coefficients, and~$f$ is a measurable function satisfying the lower bound in~\eqref{e.hetereqagain}.

\begin{lemma} \label{l.basicregularity}
Fix  $q \in (\frac d2,\infty)$ and $\delta \in (0,1)$. 
Suppose that~$g \in W^{1,2+\delta}(U)$ and~$f \in L^q(U)$ for $q \in (\frac d2,\infty)$

\begin{itemize}
\item Global estimates. There exists constants $C(U,d,\Lambda)< \infty$ and $\alpha(\delta,d,\Lambda)\in (0,1)$ such that 
\begin{align} \label{e.Meyers}
\left\| \nabla u^\ep  \right\|_{L^{2+\alpha} \left(U \right)} 
+
\left\| \nabla u  \right\|_{L^{2+\alpha} \left(U \right)} 
\leq 
C \big( \| \nabla g \|_{L^{2+\delta}(U)}+\|f\|_{L^{2+\delta}(U)} \big).
\end{align}
\item Local estimates. There exists a constant $C(d,\Lambda)< \infty$ and $\alpha(d,\Lambda) \in (0,1)$ such that if $B_{2r}(z) \subset U$, then
\begin{multline}  \label{e.holder}
\left\| u^\ep  \right\|_{L^{\infty} \left(B_{r}(z) \right)} 
+
r^\alpha \left[ u^\ep  \right]_{C^{0,\alpha} \left(B_{r}(z) \right)} 
+ r \left\| \nabla u^\ep  \right\|_{L^{2+\alpha} \left(B_{r}(z) \right)} 
\\ 
\leq 
C \big( \left\| u^\ep - (u^\ep)_{B_{2r}(z)}  \right\|_{\underline{L}^{2} \left(B_{2r}(z) \right)} + r^2 \|f\|_{\underline{L}^{q}(B_{2r}(z))} \big)
.
\end{multline}
\end{itemize}

\end{lemma}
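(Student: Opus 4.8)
The plan is to derive both the Meyers-type gradient bound and the local Hölder/gradient bound from the standard theory already recalled in the excerpt, treating the obstacle term $f\chi_{\{u>0\}}$ simply as a measurable right-hand side bounded by $|f|$ in the relevant norms. The key observation is that, although the free boundary is not known a priori, the right-hand side $f\chi_{\{u^\ep>0\}}$ satisfies $\|f\chi_{\{u^\ep>0\}}\|_{L^p}\le\|f\|_{L^p}$ for every $p$ and likewise for negative Sobolev norms, so $u^\ep$ (and $u$) is a weak solution of an equation $\nabla\cdot(\b\nabla u^\ep)=F$ with $F\in L^q(U)$, $\|F\|_{L^q}\le\|f\|_{L^q}$, and $\b$ uniformly elliptic and symmetric. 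From here all the estimates are classical.

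First I would prove the global estimate \eqref{e.Meyers}. Since $q>d/2$, if $q\ge \frac{2d}{d+2}$ (the Sobolev conjugate exponent of $2$) we may write $F\in L^q(U)\hookrightarrow W^{-1,2+\delta}(U)$ for $\delta$ small depending on $(\delta,d)$ via the Sobolev embedding, absorbing the $U$-dependence into the constant; if $q$ is below that threshold one still has $L^q\hookrightarrow W^{-1,r}$ for some $r>2$ by the same embedding since $q>d/2$. Then the global version of Meyers' lemma (Lemma~\ref{l.Meyers}), applied with the boundary datum $g\in W^{1,2+\delta}(U)$, yields
\begin{equation*}
\|\nabla u^\ep\|_{\underline{L}^{(2+\delta')\wedge r}(U)}\le C\big(\|\nabla g\|_{\underline{L}^2(U)}+\|F\|_{\underline{W}^{-1,(2+\delta')\wedge r}(U)}\big)\le C\big(\|\nabla g\|_{L^{2+\delta}(U)}+\|f\|_{L^{2+\delta}(U)}\big),
\end{equation*}
and setting $\alpha:=(\delta'\wedge(r-2))$ gives \eqref{e.Meyers} for $u^\ep$; the identical argument with $\ahom$ in place of $\a^\ep$ gives it for $u$, since $\ahom$ satisfies the same ellipticity bounds.

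For the local estimates \eqref{e.holder}, the Hölder bound and the $L^\infty$ bound are exactly the De Giorgi--Nash--Moser estimate \eqref{e.DGNM} of Theorem~\ref{t.DGNM}, applied on $B_{2r}(z)$ with right-hand side $F=f\chi_{\{u^\ep>0\}}$, $\|F\|_{\underline L^q(B_{2r}(z))}\le\|f\|_{\underline L^q(B_{2r}(z))}$, after subtracting the constant $(u^\ep)_{B_{2r}(z)}$ (the equation is invariant under constants and $\{u^\ep>0\}$ is replaced by a translate, which does not affect the bound on $F$). This directly produces the first two terms on the left of \eqref{e.holder} controlled by the right-hand side of \eqref{e.holder}. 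The local gradient term $r\|\nabla u^\ep\|_{L^{2+\alpha}(B_r(z))}$ then comes from the interior (first) version of Meyers' lemma, Lemma~\ref{l.Meyers}, applied on, say, $B_{3r/2}(z)$: it gives $\|\nabla u^\ep\|_{\underline L^{2+\alpha}(B_r(z))}\le C(\|\nabla u^\ep\|_{\underline L^2(B_{3r/2}(z))}+\|F\|_{\underline W^{-1,2+\alpha}(B_{3r/2}(z))})$, and then the Caccioppoli inequality on the annulus bounds $\|\nabla u^\ep\|_{\underline L^2(B_{3r/2}(z))}$ by $r^{-1}\|u^\ep-(u^\ep)_{B_{2r}(z)}\|_{\underline L^2(B_{2r}(z))}+r\|f\|_{\underline L^q(B_{2r}(z))}$ (using $L^q\hookrightarrow H^{-1}$ locally, with scaling accounted for by the $\underline W^{-1,\cdot}$-normalization), after which multiplying by $r$ and readjusting the exponent $\alpha$ to the minimum of the two Meyers exponents gives the claim.

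The main (and really only) subtlety is bookkeeping the exponents and scalings: one must pick a single $\alpha=\alpha(\delta,d,\Lambda)$ (resp. $\alpha(d,\Lambda)$) that works simultaneously for the Meyers improvement and for the Sobolev embedding $L^q\hookrightarrow W^{-1,2+\alpha}$, and one must keep track of the correct powers of $r$ so that the $\underline W^{-1,\cdot}$ and $\underline L^q$ norms combine into the $r^2\|f\|_{\underline L^q}$ term with the right exponent; since $q>d/2$ this scaling works out exactly as in the DGNM estimate \eqref{e.DGNM}. Everything else is a direct citation of Lemma~\ref{l.Meyers} and Theorem~\ref{t.DGNM}, which is why the section is flagged as skippable for experts.
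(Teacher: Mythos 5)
Your proof is correct and takes the same route the paper itself indicates: the paper ``proves'' this lemma in a two-sentence remark (treat $f\chi_{\{u^\ep>0\}}$ as a measurable right-hand side, then invoke Meyers and De~Giorgi--Nash--Moser), and you simply spell out the details by applying Lemma~\ref{l.Meyers} globally and locally, Theorem~\ref{t.DGNM} for the $L^\infty$/H\"older bound, and a Caccioppoli estimate to close the local gradient bound. Your parenthetical about subtracting the mean $(u^\ep)_{B_{2r}(z)}$ is in fact needed for \eqref{e.holder} to hold as a constant-invariant statement and is the right reading of the lemma.
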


Here~\eqref{e.Meyers} follows by treating $f \chi_{\{u^\ep > 0\}}$ as  bounded right hand side. The H\"older regularity estimate in the scaling invariant form is also classical, see~\cite{HKM}.

\smallskip

Next we would like to state the properties of the obstacle with rough coefficients, following  Ivan Blank and collaborators, \cite{ BZ1, BZ2, BK}.

\begin{lemma}[Blank \& Hao~\cite{BZ1}]
\label{l.BlankHao}
Let $ I_d \leq  \a \leq \Lambda I_d $, 
be a symmetric matrix, with bounded measurable coefficients and  $ f \geq \lambda >0  $
be a bounded measurable function. There exist constants~ $0 < c <C< \infty $, both depending only on the given data  $(\lambda,d,\Lambda)$, such that for any  $ u \geq 0 $ solution of the divergence-form obstacle problem 
\begin{equation}
\nabla \cdot(\a \nabla u)=f \chi_{\{ u>0\}}, \textrm{ in } U
\end{equation}
the following statements hold:\\
 \textit{Quadratic decay (\cite[Theorem 3.2]{BZ1}):} If $ x_0 \in \varGamma(u)$ is  a fixed free boundary point, then 
 \begin{equation} \label{e.quadratic}
\sup_{B_r(x_0)} u \leq C\| f  \|_{L^\infty(U)} r^2.
\end{equation}
  \textit{Non-degeneracy (\cite[Theorem 3.9]{BZ1}):}
If $ x_0\in \overline{\{ u >0 \} } \cap U $, then 
    \begin{equation} \label{e.nondeg}
 \sup_{B_r(x_0)}  u \geq c r^2,
\end{equation}
provided $ B_{2r}(x_0)\subset U$.
\end{lemma}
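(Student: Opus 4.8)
Both assertions are local and purely deterministic; homogenization plays no role. The only real difficulty, relative to Caffarelli's treatment of the constant-coefficient obstacle problem, is that the rescaled coefficients $\a(x_0+r\,\cdot)$ do not converge, so blow-up/compactness arguments are unavailable and one must argue by direct comparison with explicit barriers. Two preliminary facts will be used throughout: since $f\chi_{\{u>0\}}\in L^\infty(U)\subset L^q(U)$ with $q>d/2$, Theorem~\ref{t.DGNM} gives that $u$ is continuous, and since $\nabla\cdot(\a\nabla u)=f\chi_{\{u>0\}}\geq 0$ in $U$, the function $u$ is a nonnegative $\a$-subsolution in $U$.

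For the \emph{quadratic decay}~\eqref{e.quadratic} the plan is to use harmonic replacement. Fix $x_0\in\varGamma(u)$ with $B_{2r}(x_0)\subset U$, and let $h\in u+H^1_0(B_{2r}(x_0))$ be the $\a$-harmonic function with boundary data $u$ on $\partial B_{2r}(x_0)$. Since $u$ is a nonnegative $\a$-subsolution, the comparison principle gives $0\leq u\leq h$, and the minimum principle gives $h\geq 0$. Then $w:=h-u\geq 0$ solves $-\nabla\cdot(\a\nabla w)=f\chi_{\{u>0\}}$ in $B_{2r}(x_0)$ with $w=0$ on $\partial B_{2r}(x_0)$ and $0\leq f\chi_{\{u>0\}}\leq\|f\|_{L^\infty(B_{2r}(x_0))}$; comparing $w$ with the solution of the Dirichlet problem on $B_{2r}(x_0)$ having constant right-hand side $\|f\|_{L^\infty(B_{2r}(x_0))}$ and zero boundary data, and using the standard global bound for such problems (a consequence of Theorem~\ref{t.DGNM}), one obtains $0\leq w\leq C(d,\Lambda)\|f\|_{L^\infty(B_{2r}(x_0))}r^2$ on $B_{2r}(x_0)$. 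Because $u(x_0)=0$ this yields $h(x_0)=w(x_0)\leq C\|f\|_{L^\infty(B_{2r}(x_0))}r^2$, and then the Harnack inequality applied to the nonnegative $\a$-harmonic function $h$ gives $\sup_{B_r(x_0)}u\leq\sup_{B_r(x_0)}h\leq C h(x_0)\leq C\|f\|_{L^\infty(U)}r^2$, which is~\eqref{e.quadratic}.

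For the \emph{non-degeneracy}~\eqref{e.nondeg} the plan is to follow Caffarelli's comparison scheme, with the explicit quadratic $\tfrac{\lambda}{2d}|x-x_0|^2$ (available when $\a\equiv I_d$) replaced by a barrier built from a constant-right-hand-side Dirichlet problem. By continuity it suffices to treat $x_0$ with $u(x_0)>0$. Two structural ingredients enter. First, $\{u>0\}$ has no connected component $D$ compactly contained in $U$: on such a $D$ one would have $\nabla\cdot(\a\nabla u)=f\geq\lambda>0$ with $u=0$ on $\partial D$, and testing against $u$ forces $u\equiv 0$ in $D$, a contradiction; hence the component of $\{u>0\}$ through $x_0$ must cross $\partial B_r(x_0)$ whenever $B_{2r}(x_0)\subset U$. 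Second, for the barrier $\Phi$ solving $-\nabla\cdot(\a\nabla\Phi)=\lambda$ in $B_r(x_0)$ with $\Phi=0$ on $\partial B_r(x_0)$ one has the two-sided bound $c(d,\Lambda)\lambda r^2\leq\Phi(x_0)$ and $\|\Phi\|_{L^\infty(B_r(x_0))}\leq C(d,\Lambda)\lambda r^2$, the upper bound being routine (Theorem~\ref{t.DGNM}) and the lower bound a consequence of classical Green's function lower bounds for uniformly elliptic operators (Littman--Stampacchia--Weinberger). One then compares $u$ with a shifted barrier $P:=u(x_0)+\Phi(x_0)-\Phi$ on $\{u>0\}\cap B_r(x_0)$: there $u-P$ is an $\a$-subsolution ($\nabla\cdot(\a\nabla(u-P))=f-\lambda\geq 0$), it vanishes at $x_0$, it is negative on the part of $\partial\{u>0\}$ meeting $B_r(x_0)$, and on $\partial B_r(x_0)$ it equals $u-u(x_0)-\Phi(x_0)$; the maximum principle, combined with the first ingredient which ensures the $\partial B_r(x_0)$-part of the boundary is genuinely present, then forces $\sup_{\partial B_r(x_0)}u\geq u(x_0)+\Phi(x_0)\geq c\lambda r^2$.

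The step I expect to be the main obstacle is making this last comparison robust: unlike the explicit quadratic, the barrier $\Phi$ is only controlled from below near the center and from above everywhere, with a gap between the two constants, so the shifted barrier $P$ need not stay positive up to the free boundary when $u(x_0)$ is small compared with $\lambda r^2$ (the regime in which the trivial bound $\sup_{B_r(x_0)}u\geq u(x_0)$ is useless). Overcoming this requires choosing the barrier at a scale adapted to the value $u(x_0)$ and iterating over dyadic radii, which is exactly where the merely measurable nature of $\a$ is felt and where the mean-value-set technology of Blank--Hao is invoked; the details can be found in~\cite{BZ1}.
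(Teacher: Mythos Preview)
The paper does not give its own proof of this lemma; it is stated as a result of Blank and Hao, with pointers to \cite[Theorems~3.2 and~3.9]{BZ1}, and is used thereafter as a black box. Your write-up therefore goes beyond what the paper supplies, and your final deferral to \cite{BZ1} is consistent with the paper's treatment.

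Your harmonic-replacement argument for the quadratic decay~\eqref{e.quadratic} is correct and is the standard one: bound the defect $w=h-u$ by $C\|f\|_{L^\infty}r^2$ via comparison with the constant-right-hand-side Dirichlet problem, evaluate at $x_0\in\varGamma(u)$ to get $h(x_0)\leq C\|f\|_{L^\infty}r^2$, and then use Harnack for the nonnegative $\a$-harmonic function $h$ to pass to $\sup_{B_r(x_0)}u\leq\sup_{B_r(x_0)}h$. This is essentially how \cite{BZ1} proceeds.

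For the non-degeneracy~\eqref{e.nondeg} your self-diagnosis is accurate. Caffarelli's comparison rests on the fact that the explicit quadratic $q(x)=\tfrac{\lambda}{2d}|x-x_0|^2$ satisfies $\Delta q=\lambda$ \emph{and} is minimized at $x_0$, so that $u(x_0)-q$ dominates $u-q$ on the free boundary piece of $\partial(\{u>0\}\cap B_r(x_0))$. For merely measurable $\a$ the divergence $\nabla\cdot(\a\nabla|x|^2)$ is uncontrolled in the weak sense, and your substitute $\Phi$ solving $-\nabla\cdot(\a\nabla\Phi)=\lambda$ with zero boundary data only gives $\Phi(x_0)\geq c\lambda r^2$ and $\sup_{B_r(x_0)}\Phi\leq C\lambda r^2$ with $c<C$; hence $P=u(x_0)+\Phi(x_0)-\Phi$ can become negative along $\partial\{u>0\}\cap B_r(x_0)$ when $u(x_0)\ll\lambda r^2$, and the maximum principle step fails exactly where you say it does. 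The remedy in \cite{BZ1} is the mean-value-set construction: one replaces the Euclidean ball by the superlevel sets $D_R(x_0)$ of the Green's function, engineered so that the corresponding barrier attains its maximum at $x_0$, which restores the missing inequality and lets Caffarelli's scheme run verbatim. Your identification of this as the crux, and your pointer to \cite{BZ1} for the resolution, are both correct.
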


If $ \a =I_d$, then  in \cite{B01} Blank gives an example, showing that the blow-ups are not unique if $ f $ is merely continuous. If $ a_{ij}$ and $ f $ are merely bounded and measurable, we cannot even define the blow-up solutions, see Section 5. 
There is a counterexample in~\cite{BZ1} showing  that if~$\a$ is discontinuous then~$\lim_{r\to 0+} u_r$ does not exists, even when imposing a  uniform positive density assumption on the coincidence set $ \{u \equiv 0\}$ at free boundary points.  In \cite{BZ2}, see also \cite{BK}, the  authors  show the existence of blow-ups assuming that $ \a_{ij}, f \in VMO$, they also show that in general the blow-ups are not unique by constructing a counterexample.


\smallskip

\section{Homogenization of the obstacle problem }

In this section we will prove the necessary homogenization result for the obstacle problem. 
Let  $U$ be a Lipschitz domain,~$g \in W^{1,2+\delta}(U)$ and~$f \in L^\infty(U)$,~$f > 0$~a.e.. 
We consider the nonnegative solutions $ u^\ep$ and $ u$ to the following obstacle problems respectively 
\begin{align}  \label{e.homog.eqs}
\left\{
\begin{aligned}
& \nabla\cdot \a^\ep \nabla u^\ep = f \chi_{\{ u^\ep > 0 \}} & & 
\mbox{ in } U,
\\
& u = g & & 
\mbox{ on } \partial U,
\end{aligned}
\right.
\qquad 
\left\{
\begin{aligned}
& \nabla\cdot \ahom \nabla u = f \chi_{\{ u > 0 \}} & & 
\mbox{ in } U,
\\
& u = g & & 
\mbox{ on } \partial U
.
\end{aligned}
\right.
\end{align}
We will derive the homogenization result for the obstacle problem in two steps. First, we compare the solution of the obstacle problem with the solution of the penalized problem, for which merely measurable coefficients suffice. Second, we prove the actual homogenization estimates for the penalized problem, and after that tie these together simply by the triangle inequality. 

\smallskip 

The main theorem of this section reads as follows. 
\begin{theorem} \label{t.homogenization}
Let~$U$ be a bounded Lipschitz domain in~$\cu_0$. Let~$\delta \in (0,\infty]$. There exist constants~$C(U,\delta,d,\Lambda)<\infty$ and~$\alpha(\delta,d,\Lambda)  \in (0,\tfrac12)$ such that the following statement is valid. Suppose that~$g \in W^{1,2+\delta}(U)$ and~$f \in L^\infty(U)$ are such that~$g \geq 0$ and~$f>0$ almost everywhere in~$U$. 
Let~$u^\ep \geq 0$ and~$u \geq 0$ solve the obstacle problems~\eqref{e.homog.eqs} respectively.
Then we have the estimate
\begin{multline} \label{e.homogenization}
\| u^{\varepsilon} - u \|_{L^2(U)} 
+\| \nabla u^{\varepsilon} - \nabla u \|_{H^{-1}(U)} 
+\| \a(\tfrac{\cdot}{\ep})\nabla u^{\varepsilon} - \ahom \nabla u \|_{H^{-1}(U)} 
\\
\leq C \big( \mathcal{E} (\ep)\big)^\alpha ( \| \nabla g \|_{L^{2+\delta}(U)}+\|f\|_{L^\infty(U)}).
\end{multline}
\end{theorem}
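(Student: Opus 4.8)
The plan is to follow the two-step strategy announced just before the theorem: first replace both obstacle problems by their \emph{penalized} (regularized) counterparts with a common penalization parameter, and then run a two-scale expansion argument on the penalized heterogeneous problem, which now has a genuinely right-hand side in $H^{-1}$ that can be controlled through the first-order correctors. Concretely, for a penalization parameter $\delta_0>0$ pick a smooth nondecreasing $\beta_{\delta_0}:\R\to\R$ with $\beta_{\delta_0}(t)=0$ for $t\geq 0$ and $\beta_{\delta_0}(t)\to -\infty$ as $t\to-\infty$ appropriately scaled, and let $u^\ep_{\delta_0}$ solve $\nabla\cdot(\a^\ep\nabla u^\ep_{\delta_0}) = f + \beta_{\delta_0}(u^\ep_{\delta_0})$ in $U$ with boundary data $g$, and similarly $u_{\delta_0}$ with $\ahom$. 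Since only ellipticity is used, the energy comparison between $u^\ep$ and $u^\ep_{\delta_0}$ (and between $u$ and $u_{\delta_0}$) is standard: testing the difference of the equations and using monotonicity of $\beta_{\delta_0}$ together with the fact that $f\chi_{\{u^\ep>0\}}$ and $f+\beta_{\delta_0}(u^\ep_{\delta_0})$ are close yields $\|u^\ep-u^\ep_{\delta_0}\|_{H^1(U)} + \|u-u_{\delta_0}\|_{H^1(U)}\leq C\omega(\delta_0)$ for some modulus $\omega(\delta_0)\to 0$, with constants depending only on $\Lambda$, $U$ and $(\|\nabla g\|_{L^{2+\delta}},\|f\|_{L^\infty})$. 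One uses here the Meyers estimate (Lemma~\ref{l.Meyers}) to get the $L^{2+\alpha}$ integrability that upgrades the $H^{-1}$ control of the right-hand side discrepancy.

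\textbf{Homogenization of the penalized problem.} With $\delta_0$ fixed, the penalized heterogeneous equation is a semilinear divergence-form PDE with a \emph{monotone} zeroth-order term and right-hand side $f\in L^\infty\subset H^{-1}$. I would freeze the nonlinearity: write $F^\ep := f+\beta_{\delta_0}(u^\ep_{\delta_0})$, so $u^\ep_{\delta_0}$ is $\a^\ep$-harmonic with source $F^\ep$, and $u_{\delta_0}$ is $\ahom$-harmonic with source $F:=f+\beta_{\delta_0}(u_{\delta_0})$. The two-scale expansion then reads $w^\ep := u_{\delta_0} + \sum_{k=1}^d \ep\,\phi_{m,e_k}(\tfrac\cdot\ep)\,\partial_{x_k}u_{\delta_0}$ (using the coarsened correctors $\phi_{m,e_k}$ from~\eqref{e.corrector.m} with $m=\lceil-\log_3\ep\rceil$), and the flux correctors $\mathbf{S}_m^{(e_k)}$ from~\eqref{e.fluxcorrector} are used to write the error $\a^\ep\nabla w^\ep - \ahom\nabla u_{\delta_0}$ as a divergence of something controlled by $3^{-m}(\||\mathbf{S}_m|+|\phi_m|\|_{\underline L^2})$ plus a term involving $|\ahom_m-\ahom|$, both absorbed into $\mathcal E(\ep)$ via~\eqref{e.error.intro}. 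Testing the equation for $u^\ep_{\delta_0}-w^\ep$ against itself, one gets $\|\nabla(u^\ep_{\delta_0}-w^\ep)\|_{L^2}\leq C(\mathcal E(\ep))^\alpha(\|\nabla g\|_{L^{2+\delta}}+\|f\|_{L^\infty})$ after using the monotonicity of $\beta_{\delta_0}$ to discard the lower-order difference with a favorable sign, plus the standard boundary-layer correction near $\partial U$ (this is where the Lipschitz-domain hypothesis and the extra Meyers integrability $\delta>0$ enter, giving the loss of exponent from $\ep$ to $(\mathcal E(\ep))^\alpha$). Since $\|u^\ep_{\delta_0}-u_{\delta_0}\|_{L^2}\leq\|u^\ep_{\delta_0}-w^\ep\|_{L^2}+\|w^\ep-u_{\delta_0}\|_{L^2}$ and the last term is $O(\ep\|\nabla u_{\delta_0}\|_{L^2})$, this controls all three norms on the left of~\eqref{e.homogenization} for the penalized pair: the $L^2$ difference directly, the $H^{-1}$ gradient difference by duality, and the flux difference by the flux-corrector identity.

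\textbf{Combining and the main obstacle.} Finally one chooses $\delta_0=\delta_0(\ep)$ to balance the penalization error $\omega(\delta_0)$ against $(\mathcal E(\ep))^\alpha$: since $\omega(\delta_0)\to 0$ as $\delta_0\to 0$ with a rate depending on $(U,d,\Lambda)$ and the data, picking $\delta_0$ a suitable power of $\mathcal E(\ep)$ makes both contributions comparable, and the triangle inequality $\|u^\ep-u\|\leq\|u^\ep-u^\ep_{\delta_0}\|+\|u^\ep_{\delta_0}-u_{\delta_0}\|+\|u_{\delta_0}-u\|$ yields~\eqref{e.homogenization} (possibly with $\alpha$ shrunk). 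The main obstacle I anticipate is making the penalization-error modulus $\omega(\delta_0)$ \emph{quantitative and uniform in $\ep$}: one must show that the $H^1$-distance between the obstacle solution and the penalized solution is bounded by an explicit power of $\delta_0$ with a constant independent of $\ep$, which requires the non-degeneracy/quadratic-growth bounds (Lemma~\ref{l.BlankHao}) to control the measure of the ``almost-contact'' region $\{0<u^\ep_{\delta_0}<\delta_0\}$ where the penalized and obstacle right-hand sides differ, together with the uniform Meyers integrability to convert that measure bound into an $H^{-1}$ estimate. The second delicate point is the boundary layer in the two-scale expansion; one handles it in the usual way by cutting off the corrector near $\partial U$ and estimating the resulting commutator on a shell of width $\sim\ep$, using $g\in W^{1,2+\delta}$ to beat the extra power of $|\partial U|$-shell via Hölder.
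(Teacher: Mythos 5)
Your overall plan — approximate both obstacle problems by penalized PDEs with a common small parameter, homogenize the penalized problem by a two-scale expansion with flux correctors, and optimize over the penalization parameter at the end — is exactly the structure the paper uses (\lref{penalization} and \pref{homogenization}, combined by optimization in $s$). The genuine gap is in how you propose to control the penalization error. You rightly flag it as the main obstacle, but then suggest it requires the non-degeneracy/quadratic-growth machinery of \lref{BlankHao} to estimate the measure of an ``almost-contact'' region; it does not, and the reason is the specific form of the penalization.

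The paper's penalized equation is $\nabla\cdot(\b\nabla u^s) = f\,\beta_s(u^s)$ with $\beta_s(t)=\beta(t/s)$, where $\beta:\R\to[-1,1]$ is the truncated identity; i.e.\ $\beta_s$ is a \emph{bounded}, monotone regularization of $\mathrm{sign}(\cdot)$, and it \emph{multiplies} $f$ rather than being added to it. This buys a clean pointwise estimate at scale $s$: \lref{penalization} shows $0\leq u^s-u\leq s$ in $U$, and then $\|\nabla(u^s-u)\|_{L^2(U)}^2\leq \|f\|_{L^1(U)}\,s$ follows by testing the difference of the two equations and inserting the pointwise bound. Both facts are proved purely by strong maximum/comparison principle arguments, uniformly in $\ep$ because they use only ellipticity of $\b$. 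There is no appeal to non-degeneracy, quadratic growth, or Meyers in that step. Your penalization — an unbounded penalty $\beta_{\delta_0}(t)\to-\infty$ as $t\to-\infty$ \emph{added} to $f$ — does not obviously yield a one-sided, scale-$\delta_0$ pointwise comparison between $u^\ep_{\delta_0}$ and $u^\ep$ (the penalized solution may dip below zero, and the $H^{-1}$ discrepancy between $f\chi_{\{u^\ep>0\}}$ and $f+\beta_{\delta_0}(u^\ep_{\delta_0})$ concentrates on a set whose measure you would indeed need to estimate by a geometric argument). That difficulty is real, but it is self-inflicted and vanishes with the bounded multiplicative penalization.

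One further point you should be aware of before claiming the final optimization is routine: the penalization parameter is not free in the homogenization step either. Because $\beta_s$ has Lipschitz constant $1/s$, the proof of \pref{homogenization} produces an extra error term of order $C(\mathcal E(\ep))^{1-\alpha}\,s^{-1}\,\|\nabla g\|_{L^2(U)}\|f\|_{L^q(U)}$ (Step 4 there). The final balance is therefore between a penalization error of order $s^{1/2}$ and a homogenization error containing a $1/s$ factor, which is why the exponent $\alpha$ in~\eqref{e.homogenization} ends up strictly less than what the two-scale expansion alone would give.
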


\subsection{Closeness of the two obstacle problems}
We claim  that the homogenization  of the solution to the normalized obstacle problem imply homogenization of the solution to the original obstacle problem, i.e. with given obstacle $\varphi \in H^1(U)$. 
For this, consider the problems 
\begin{align} \label{e.varhpieqs}
\left\{
\begin{aligned}
 & \nabla \cdot \a^\ep \nabla w^\ep =  \nabla \cdot \ahom \nabla w
&& 
  \mbox{in } U , 
\\
& w^\ep =  w
&& 
  \mbox{on } \partial U 
\end{aligned}
\right.
\end{align}
with $w\in W^{1,2+\beta} (U)  \cap W_{\mathrm{loc}}^{2,\infty} (U)$, $\beta \in (0,\infty]$.

\begin{lemma} 
\label{l.wepandw.close}
Fix $\beta\in (0,1)$ and let $U \subseteq B_1$ be a Lipschitz domain. Then there exist constants $\delta(\beta,d,\Lambda) \in (0,\beta]$ and $C(U,\beta,d,\Lambda)< \infty$ such that the following holds.  If~$w^\ep \in W^{1,2+\beta} (U)$ 
and $w \in W^{1,2+\beta} (U)  \cap W_{\mathrm{loc}}^{2,\infty} (U)$ solve~\eqref{e.varhpieqs}, then, for every~$r \in (0,1)$, 
\begin{align} \label{e.varhpisclose}
\| w^\ep - w\|_{L^{2} (U)} 
& 
\leq 
C \big(\mathcal{E}(\ep)  +  r^{\frac{\delta}{2+\delta} } \big) \| \nabla w \|_{L^{2+\delta} (U)}
\\ \notag
& \quad
+ 
C  \mathcal{E}(\ep) 
\big(
r^{-1} \| \nabla w \|_{L^{\infty} (U_r)}   +  \| \nabla^2 w \|_{L^{\infty} (U_r)}  
\big) 
,
\end{align}
where $U_r  = \{ x\in U \, : \, \dist(x,\partial U) > r \}$. 
\end{lemma}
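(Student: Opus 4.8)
The plan is to run the standard two-scale expansion argument for the Dirichlet problem~\eqref{e.varhpieqs}, but being careful that $w$ has only $W^{1,2+\beta}$ regularity up to $\partial U$ and $W^{2,\infty}$ regularity only in the interior. Accordingly I would first split $w$ into a ``good'' interior part, where the correctors can be applied, and a boundary-layer part, whose contribution is controlled crudely using the Meyers estimate (Lemma~\ref{l.Meyers}) and the size of the boundary strip $U \setminus U_r$. Concretely, fix $r \in (0,1)$, take a cutoff $\eta \in C_c^\infty(U_{r/2})$ with $\eta \equiv 1$ on $U_r$, $|\nabla \eta| \lesssim r^{-1}$, and write the comparison as $\| w^\ep - w\|_{L^2(U)} \le \| w^\ep - w - F^\ep \|_{L^2(U)} + \| F^\ep\|_{L^2(U)}$, where $F^\ep$ is the (corrected) two-scale corrector built from $\eta w$: schematically, using the finite-volume correctors $\phi_{m,e}$ of~\eqref{e.corrector.m} with $m = \lceil -\log_3 \ep\rceil$, one sets $F^\ep(x) \approx \ep \sum_k \phi_{m,e_k}(x/\ep)\, \partial_{x_k}(\eta w)(x)$ and a flux-corrector correction involving $\mathbf{S}_m$. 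The $L^2$ norm of $F^\ep$ itself is then bounded by $\ep \| \nabla\phi_{m}(\cdot/\ep)+\ |\mathbf{S}_m(\cdot/\ep)|\|_{\underline{L}^2} \|\nabla(\eta w)\|_{L^\infty}$, which after rescaling is $\lesssim \mathcal{E}(\ep)\,( r^{-1}\|\nabla w\|_{L^\infty(U_r)} + \|\nabla^2 w\|_{L^\infty(U_r)})$ by~\eqref{e.error.intro}; this is exactly the second line of~\eqref{e.varhpisclose}.

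Next I would estimate $z^\ep := w^\ep - w - F^\ep$. One computes $-\nabla\cdot(\a^\ep \nabla z^\ep)$ and organizes the right-hand side into three types of terms: (i) the genuine homogenization error in the interior, which is $-\nabla\cdot$ of a quantity that is small in $H^{-1}$ because of the defining properties of $\phi_m$, $\mathbf S_m$, and $\ahom_m e = \fint \a(e+\nabla\phi_{m,e})$ — the discrepancy $\ahom_m - \ahom$ contributes the $|\ahom_m - \ahom|$ part of $\mathcal{E}(\ep)$, while the oscillatory terms contribute the corrector-size parts, all multiplied by second derivatives of $w$ on $U_r$; (ii) terms supported in the strip $U \setminus U_r$, coming from $\nabla\eta$ and from the fact that $w$ is not corrected there, which are estimated by $\| \nabla w\|_{L^{2+\delta}(U\setminus U_r)}$ times $|U\setminus U_r|^{\delta/(2(2+\delta))} \lesssim r^{\delta/(2+\delta)}$ via Hölder and Meyers; and (iii) lower-order terms with one factor of $\ep$ that are absorbed into $\mathcal{E}(\ep)$. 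Since $z^\ep$ has boundary data $-F^\ep$ on $\partial U$ (which is supported away from $\partial U$, so in fact $z^\ep \in H^1_0$ of a slightly larger set, or one absorbs a further small boundary term), applying the energy estimate / Lemma~\ref{l.Meyers} to $z^\ep$ and then Poincaré gives $\|z^\ep\|_{L^2(U)}$ bounded by the $H^{-1}$ norm of the right-hand side plus the trace contribution, which yields precisely the first line of~\eqref{e.varhpisclose} (the $\mathcal{E}(\ep)\|\nabla w\|_{L^{2+\delta}}$ term from the genuine error and the $r^{\delta/(2+\delta)}\|\nabla w\|_{L^{2+\delta}}$ term from the boundary strip).

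The main obstacle, and the reason for the two-parameter statement, is the mismatch between the regularity available for $w$ ($W^{2,\infty}$ only locally) and what the two-scale expansion wants ($W^{2,\infty}$ globally): one cannot correct $w$ near $\partial U$, so the boundary layer must be handled by brute force, and the parameter $r$ is exactly the trade-off knob between the boundary-strip error $r^{\delta/(2+\delta)}$ and the blow-up $r^{-1}$ of the interior $W^{2,\infty}$ bound as $r\to 0$. A secondary technical point is that the finite-volume correctors $\phi_{m,e}$, $\mathbf{S}_m^e$ live on $\cu_m$ with $3^m \approx \ep^{-1}$ rather than on all of $\R^d$, so one must localize the two-scale computation to that single cube (the relevant scale since $U \subseteq B_1 \subseteq \cu_0$) and use the $\hat{\underline H}^{-1}$-type bound~\eqref{e.fluxcorrector.est1}; this is routine given the setup in Subsection~\ref{s.homogenization} but needs the periodicity of the extended correctors to push the error terms to $H^{-1}$. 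Finally one should check that $\delta$ can be taken in $(0,\beta]$ so that the Meyers exponent $2+\delta$ matches the integrability of $\nabla w$ hypothesized in the statement; this follows from Lemma~\ref{l.Meyers} with a possibly smaller $\delta$.
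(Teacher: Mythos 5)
Your proposal follows essentially the same route as the paper: define the two-scale expansion $\theta^\ep = w + \ep\,\eta\sum_k \partial_{x_k}w\,\phi_{m,e_k}(\cdot/\ep)$ with an interior cutoff $\eta$ equal to $1$ on $U_r$ and vanishing near $\partial U$, use the energy estimate (and the fact that $w^\ep = \theta^\ep$ on $\partial U$) to reduce the claim to an $L^2$ bound on the commutator error $\g^\ep$, and split $\g^\ep$ into interior corrector/flux-corrector terms controlled by $\mathcal{E}(\ep)$ times $\|\nabla w\|_{L^\infty(U_r)}$, $\|\nabla^2 w\|_{L^\infty(U_r)}$, and boundary-strip terms controlled by a small power of $r$ times $\|\nabla w\|_{L^{2+\delta}(U)}$ via H\"older, with $r$ serving as the trade-off knob between the two. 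One minor slip: the chain $|U\setminus U_r|^{\delta/(2(2+\delta))}\lesssim r^{\delta/(2+\delta)}$ points the wrong way for $r<1$; the honest H\"older estimate on the strip gives $r^{\delta/(2(2+\delta))}$, a slightly weaker exponent, which still suffices since downstream only some positive power of $r$ is used.
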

\begin{proof}
First, define the two-scale expansion of $\varphi$ as
\begin{align*} 
\theta^\ep(x) = w(x) + \ep \eta \sum_{k=1}^d \partial_{x_k} w(x) \phi_{e_k}\big(\tfrac{x}{\ep} \big)
\end{align*}
with $\eta \in C_0^\infty(U)$ being a cutoff function such that $\indc_{U_r} \leq \eta \leq \indc_{U}$.  By a direct computation (see~\cite[Chapters 1 \& 6]{AKMBook} or the computation in the proof of Proposition~\ref{p.homogenization})
\begin{align*} 
\nabla \cdot \a^\ep \nabla \theta^\ep = \nabla \cdot \ahom_r \nabla w + \nabla \cdot \g_\ep
\,,
\end{align*}
where 
\begin{align*} 
\g_i^\ep =  \sum_{j=1}^d  (1-\eta)(\a_{ij}^\ep - \ahom_{r,ij})
\partial_{x_j} w +  \sum_{j,k=1}^d \partial_{x_j} (\eta \partial_{x_k} w) (- \mathbf{S}_{m,ij}^{\ep,e_k} + \a_{ij}^\ep \phi_{e_k}^\ep ).
\end{align*}
By equations of $w^\ep$ and $w$, the Poincar\'e inequality, and the fact that $w^\ep = \theta^\ep$ on $\partial U$, we obtain that
\begin{align*} 
\| w^\ep-  \theta^\ep  \|_{H^{1}(U)} 
\leq 
C \| \g^\ep \|_{L^{2}(U)} \,. 
\end{align*}
Therefore, by the triangle inequality and~\eqref{e.error.intro},  we arrive at 
\begin{align*} 
\| w^\ep-  \theta^\ep  \|_{H^{1}(U)}  
\leq 
C \big(\mathcal{E}(\ep)  {+}  r^{\frac{\delta}{2+\delta} } \big)  \| \nabla w \|_{L^{2+\delta} (U)}
+
C  \mathcal{E}(\ep) 
\big(
r^{-1} \| \nabla w \|_{L^{\infty} (U_r)}   {+}  \| \nabla^2 w \|_{L^{\infty} (U_r)}  
\big) 
.
\end{align*}
Applying then once more the bound~\eqref{e.error.intro} for $\phi_{e_k}$ completes the proof.
\end{proof}

We also record the comparison principle for the solutions. 

\begin{lemma}[Comparison principle] \label{l.comparison}
Let $U$ be a Lipschitz domain and $g\in H^1(U)$.  Suppose that $\varphi$ and $\tilde \varphi$ are two continuous obstacles in $H^1(U)$ and let $u$ and $\tilde u$ be  solutions to the obstacle problem with boundary values $g$ on $\partial U$ and  continuous up to the boundary. Then
\begin{align*} 
\| u - \tilde u \|_{L^\infty(U)} \leq \| \varphi - \tilde \varphi \|_{L^\infty(U)} .
\end{align*}
\end{lemma}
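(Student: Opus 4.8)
The plan is to run the classical truncation (or ``energy reshuffling'') argument. Both $u$ and $\tilde u$ minimize the \emph{same} strictly convex energy functional $J$ defining the obstacle problem — same coefficient field $\a$ and same lower-order data — over the respective admissible classes $\mathcal{K}_\varphi := \{ w \in g + H^1_0(U) : w \geq \varphi \text{ a.e.}\}$ and $\mathcal{K}_{\tilde\varphi}$. Set $M := \| \varphi - \tilde\varphi \|_{L^\infty(U)}$ and $\zeta := (u - \tilde u - M)_+ \ge 0$, and introduce the competitors $v := u - \zeta = \min(u, \tilde u + M)$ and $\tilde v := \tilde u + \zeta = \max(\tilde u, u - M)$, which lie in $H^1(U)$ by the lattice property of Sobolev spaces.

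First I would check admissibility. Pointwise a.e.\ $\varphi \le \tilde\varphi + M$ by the definition of $M$, so $\tilde u + M \ge \tilde\varphi + M \ge \varphi$ while also $u \ge \varphi$; hence $v \ge \varphi$. Since $u$ and $\tilde u$ share the boundary trace $g$, the difference $u - \tilde u$ lies in $H^1_0(U)$, so $\zeta$ has zero trace and $v \in g + H^1_0(U)$; thus $v \in \mathcal{K}_\varphi$. Symmetrically $\tilde v \ge \tilde u \ge \tilde\varphi$ and $\tilde v \in g + H^1_0(U)$, so $\tilde v \in \mathcal{K}_{\tilde\varphi}$.

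Next comes the identity that makes everything work. From $v + \tilde v = u + \tilde u$ pointwise and the Stampacchia formula $\nabla \zeta = (\nabla u - \nabla \tilde u)\1_{\{u > \tilde u + M\}}$ a.e., one sees that at a.e.\ point the unordered pair $\{\nabla v, \nabla \tilde v\}$ coincides with $\{\nabla u, \nabla \tilde u\}$. Consequently $\a \nabla v \cdot \nabla v + \a \nabla \tilde v \cdot \nabla \tilde v = \a \nabla u \cdot \nabla u + \a \nabla \tilde u \cdot \nabla \tilde u$ a.e.\ (same $\a$ for both), and any linear term in $J$ only sees $v + \tilde v = u + \tilde u$, so $J[v] + J[\tilde v] = J[u] + J[\tilde u]$. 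Adding the minimality inequalities $J[u] \le J[v]$ and $J[\tilde u] \le J[\tilde v]$ forces equality in both; by strict convexity (uniqueness of the minimizer over the convex set), $v = u$, i.e.\ $\zeta = 0$ a.e., i.e.\ $u \le \tilde u + M$ a.e. Interchanging the roles of $(u,\varphi)$ and $(\tilde u, \tilde\varphi)$ gives $\tilde u \le u + M$ a.e., and since $u, \tilde u$ are continuous this yields $\| u - \tilde u \|_{L^\infty(U)} \le M = \| \varphi - \tilde\varphi\|_{L^\infty(U)}$.

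The argument has no deep obstacle; the care is entirely in the Sobolev bookkeeping — that $\min$ and $\max$ of $H^1$ functions lie in $H^1$ with the Stampacchia gradients, that truncation preserves both the boundary trace and the obstacle constraint, and that strict convexity produces a unique minimizer. The one point worth highlighting is the ``energy-neutral swap'': recognizing that replacing $(u,\tilde u)$ by $(v,\tilde v)$ leaves the total energy unchanged is what allows minimality to be played off against itself, after which strict convexity closes the proof immediately.
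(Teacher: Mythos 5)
Your proof is correct but takes a genuinely different route from the paper's. The paper argues by contradiction through the maximum principle: if $u(x) > \tilde u(x) + \delta$ at some $x$, continuity up to the boundary produces a subdomain $V \Subset U$ with $u > \tilde u + \delta$ in $V$ and $u = \tilde u + \delta$ on $\partial V$; inside $V$ one has $u > \tilde u + \delta \geq \tilde\varphi + \delta \geq \varphi$, so $u$ is actually $\a$-harmonic there while $\tilde u + \delta$ remains a supersolution with matching boundary data, and the minimum principle gives $\tilde u + \delta \geq u$ in $V$ -- a contradiction. Your argument instead avoids PDE machinery entirely: the energy-neutral swap $v = \min(u,\tilde u + M)$, $\tilde v = \max(\tilde u, u - M)$, the pointwise identity $\{\nabla v, \nabla\tilde v\} = \{\nabla u, \nabla\tilde u\}$, and then minimality plus strict convexity. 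The trade-off is that the paper's version is shorter but leans on the continuity hypothesis (which is why it appears in the lemma's statement) and on the strong minimum/comparison principle for supersolutions; yours is longer in bookkeeping but is purely variational, only needs $H^1$ regularity and the lattice/Stampacchia facts, and extends verbatim to energies with lower-order terms since $v + \tilde v = u + \tilde u$ handles any linear part. Both are standard and both are sound; the key step you correctly isolated is that the min/max truncation leaves the total Dirichlet energy unchanged while preserving both the obstacle constraints and the boundary trace.
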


\begin{proof}

Let $\delta = \| \varphi - \tilde \varphi \|_{L^\infty(U)}$ and suppose that $\delta > 0$. Assume, on the contrary to the claim, that there is a point $x \in U$ such that $  u(x) > \tilde u(x) + \delta$. Then, by continuity, there is a domain $V$ in the interior of~$U$ such that $u > \tilde u + \delta$ in~$V$ and $u = \tilde u + \delta$ on~$\partial V$. Indeed, $u = \tilde u$ on $\partial U$, and hence~$\partial V$ cannot touch $\partial U$. Now, in $V$ we have that $u$ is above $\varphi$ and hence it is a weak solution in $V$. On the other hand, $\tilde u + \delta$ is a weak supersolution in $V$, and since $\tilde u + \delta = u$ on $\partial V$, the minimum principle implies that $  \tilde u + \delta \geq u$ in $V$; a contradiction. Thus the claim follows. 
\end{proof}

\subsection{Penalized obstacle problem}

The  penalization method is a well-known argument, which is widely used in the regularity theory of  variational problems, see for example \cite{Fribook}. The core idea of the method is the following: Given a solution $u$ to a partial differential  equation with a discontinuous right hand side, we consider the solution of the corresponding partial differential  equation with a regularized right hand side, with the same boundary values as the given function $u$. Showing that  the given solution $ u$ is approximated by a more regular function,  we then  obtain regularity estimates for the given function $u$. 

In this manuscript we use the penalization method in order to obtain quantitative homogenization  for the obstacle problem. Below we describe the method for the obstacle problem, and derive the approximation lemma, even though we believe it can be found in the literature.

\smallskip

Let~$U \subset \mathbb{R}^d$ be a given open bounded domain, with Lipschitz boundary, and~$\b$ be a given uniformly elliptic matrix, with measurable coefficients in~$U$, satisfying the ellipticity condition~$|\xi|^2 \leq \b(\cdot) \xi \cdot \xi \leq \Lambda |\xi|^2$. Let~$f \in L^\infty(U)$ be a given bounded function satisfying~$f >0$ a.e.. Denote by~$u$ the unique minimizer of the following functional
\begin{equation}
J[v, \b]=\int_U \frac{1}{2}\nabla v \cdot (\b \nabla v ) +f v \, dx 
\end{equation}
over~$v \in W^{1,2}(U)$,~$v \geq 0$, satisfying the boundary condition~$v=g>0$ on~$\partial U$.
Then~$u~$satisfies the following variational inequality in a weak sense
\begin{equation} \label{e.equationA}
\nabla \cdot (\b \nabla u )=f\chi_{\{u>0\}}, ~u \geq 0 \textrm{ in } U,~ u=g \geq 0 \textrm{ on } \partial U.
\end{equation}
We see that the right hand side in the partial differential equation in~\eqref{e.equationA} is a discontinuous function. The penalization method is studying an approximating problem for~$~\eqref{e.equationA}$, with regularized right hand side. For this, let~$\beta :(-\infty, +\infty)\to [-1,1]$ be an approximation of the function~$\chi_{\{v>0\}}$, for simplicity, we take ~$\beta$ to be the following Lipschitz continuous function,
\begin{equation} \label{e.beta}
\begin{aligned}
\beta(t)=t~ \textrm{ for }~ |t| \leq 1 
\quad \textrm{ and } \quad 
\beta({t})= t/|t| ~ \textrm{ for } ~ | t|  > 1,
\end{aligned}
\end{equation}
and denote by~$\gamma \in C^{1,1}$ its integral function~$\gamma(t) := \int_0^t \beta(t') \, dt'$. 
Let
\begin{align}
\beta_s(t):=\beta(\tfrac{t}{s}) \quad \textrm{ and } \quad \gamma_s(t)=s\gamma(\tfrac{t}{s})
\quad \textrm{ so that } \quad 
\gamma_s^\prime(t)= \beta_s(t) .
\end{align}

\smallskip

Denote by~$u^s$ the unique minimizer of the following energy
\begin{equation}
J_s[v, \b]=\int_U \tfrac{1}{2}\nabla v \cdot (\b \nabla v ) +f\gamma_s(v)dx 
\end{equation}
over~$v \in H^{1}(U)$,  satisfying the boundary condition~$v=g \geq 0$ on~$\partial U$, or equivalently 
\begin{equation}
 \nabla \cdot (\b \nabla u^s) =f \beta_s( u^s), ~u^s =g \geq 0\textrm{ on } \partial U.
\end{equation}

\smallskip

\begin{lemma} \label{l.penalization}
Let~$U$ be a Lipschitz domain. Let~$q  \in ( \frac d2, \infty]$ and let~$f \in L^q(U)$ be a given function such that~$f$ is positive almost everywhere in~$U$ and let~$g \in H^1(U)$ be a nonnegative function in~$U$. Let~$\b$ be a symmetric matrix satisfying the ellipticity condition~$|\xi|^2 \leq \b(\cdot) \xi \cdot \xi \leq \Lambda |\xi|^2$.
If~$u$ is the solution to the obstacle problem
\begin{align}
\notag
 \nabla \cdot( \b \nabla u )=f\chi_{\{ u>0 \}},
 \notag 
 \quad &
   u - g \in H_0^1(U)
 \end{align}
 and~$u^s$ solves the penalized equation 
\begin{equation}
 \nabla \cdot (\b \nabla u^s) =f \beta_s( u^s), ~ u^s - g \in H_0^1(U),
\end{equation}
then~$u^s$ is a nonnegative continuous functions and 
\begin{equation}\label{e.approx}
 0 \leq  \nabla \cdot (\b \nabla u^s) \leq f \textrm{ in a weak sense}.
\end{equation}
Furthermore,~$u^s \to u$ as~$s\to 0+$ with the following quantitative estimates:
\begin{equation}\label{e.penalized}
0  \leq u^s-u\leq  s  \quad \textrm{in } U
\end{equation}
and 
\begin{equation}\label{e.penalizedgradient}
\| \nabla  u^s-\nabla u\|^2_{L^2(U)}\leq    \| f\|_{L^1(U)} s.
\end{equation}
\end{lemma}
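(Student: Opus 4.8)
The plan is to prove the four assertions in turn; each reduces to a short testing argument with a well-chosen test function, and the only points that need care are the admissibility of these test functions in $H^1_0(U)$ and the bookkeeping of signs.

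\emph{Nonnegativity and continuity of $u^s$, and \eqref{e.approx}.} First I would observe that, since $\beta$ from \eqref{e.beta} is odd, its primitive $\gamma$ is even, so $\gamma_s(|v|)=\gamma_s(v)$; moreover $\nabla|v|\cdot\b\nabla|v|=\nabla v\cdot\b\nabla v$ almost everywhere because $\b$ is symmetric, and $|v|$ has the same trace as $v$ on $\partial U$ since $g\ge 0$. Hence $J_s[|v|,\b]=J_s[v,\b]$ for every admissible competitor $v$, so by uniqueness of the minimizer $u^s=|u^s|\ge 0$. In particular $\beta_s(u^s)=\beta(u^s/s)\in[0,1]$ pointwise, and since $f>0$ almost everywhere this yields $0\le f\beta_s(u^s)\le f$, which is \eqref{e.approx}. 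Finally, the right-hand side $f\beta_s(u^s)$ is dominated by $|f|\in L^q(U)$ with $q>d/2$, so Theorem~\ref{t.DGNM} shows that $u^s$ is locally H\"older continuous, hence continuous in $U$.

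\emph{The sandwich \eqref{e.penalized}.} Using that $u^s$ solves $\nabla\cdot(\b\nabla u^s)=f\beta_s(u^s)$ and $u$ solves $\nabla\cdot(\b\nabla u)=f\chi_{\{u>0\}}$ weakly, and that $u^s-u\in H^1_0(U)$, I would subtract the two weak formulations to get
\begin{equation*}
\int_U \b\nabla(u^s-u)\cdot\nabla\varphi = \int_U f\,\bigl(\chi_{\{u>0\}}-\beta_s(u^s)\bigr)\,\varphi, \qquad \varphi\in H^1_0(U).
\end{equation*}
To obtain $u\le u^s$ I would take $\varphi=(u-u^s)_+\in H^1_0(U)$: the left-hand side is then $-\int_{\{u>u^s\}}\b\nabla(u-u^s)\cdot\nabla(u-u^s)\le -\|\nabla(u-u^s)_+\|_{L^2(U)}^2$, while on $\{u>u^s\}$ one has $u>u^s\ge 0$, hence $\chi_{\{u>0\}}=1$ and $\beta_s(u^s)\le 1$, so the right-hand side is $\ge 0$; thus $(u-u^s)_+\equiv 0$. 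To obtain $u^s\le u+s$ I would take $\varphi=(u^s-u-s)_+$, which lies in $H^1_0(U)$ because $u^s-u$ has zero trace and $s>0$: on $\{u^s-u>s\}$ one has $u^s>u+s\ge s$, hence $\beta_s(u^s)=1$ while $\chi_{\{u>0\}}\le 1$, so the right-hand side is $\le 0$, whereas the left-hand side is $\ge\|\nabla(u^s-u-s)_+\|_{L^2(U)}^2$; thus $(u^s-u-s)_+\equiv 0$. Together these give \eqref{e.penalized}.

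\emph{The gradient estimate \eqref{e.penalizedgradient}, and the main point.} Here I would test the displayed identity with $\varphi=u^s-u\in H^1_0(U)$: the left-hand side is bounded below by $\|\nabla u^s-\nabla u\|_{L^2(U)}^2$ by ellipticity, so it suffices to prove the pointwise inequality $f\,\bigl(\chi_{\{u>0\}}-\beta_s(u^s)\bigr)\,(u^s-u)\le f\,s$ and integrate. On $\{u>0\}$ the factor $\chi_{\{u>0\}}-\beta_s(u^s)=1-\beta_s(u^s)$ lies in $[0,1]$ and, by \eqref{e.penalized}, $0\le u^s-u\le s$, so the product lies in $[0,s]$; on $\{u=0\}$ the factor equals $-\beta_s(u^s)\le 0$ while $u^s-u\ge 0$, so the product is $\le 0\le s$; multiplying by $f>0$ and integrating gives the right-hand side $\le s\int_U f=s\|f\|_{L^1(U)}$, which is \eqref{e.penalizedgradient}. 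I do not expect a serious obstacle anywhere; the whole argument hinges on the elementary observation that on $\{u^s>s\}$ the nonlinearity $\beta_s(u^s)$ equals its maximal value $1$, on $\{u=0\}$ one has $\chi_{\{u>0\}}=0\le\beta_s(u^s)$, and everywhere $0\le\beta_s(u^s)\le 1$ since $u^s\ge 0$ — precisely the sign information needed in each of the three tests — the only mildly delicate technical point being the verification that $(u^s-u-s)_+\in H^1_0(U)$.
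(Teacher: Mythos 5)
Your proof is correct, but it takes a genuinely different (and arguably cleaner) route on the qualitative parts than the paper does. For nonnegativity, the paper argues by contradiction using the strong minimum principle at an interior minimum of $u^s$: if $u^s<0$ near some $x_0$ then $\nabla\cdot(\b\nabla u^s)=f\beta_s(u^s)<0$ there, so $u^s$ is a supersolution with an interior minimum, forcing it to be a negative constant on a ball and contradicting the sign of the right-hand side. You instead exploit the variational structure directly: since $\gamma_s$ is even, $J_s[|v|,\b]=J_s[v,\b]$ for any admissible $v$ (with the trace issue handled by $g\ge 0$), so strict convexity of $J_s$ forces $u^s=|u^s|\ge 0$. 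For the sandwich $0\le u^s-u\le s$, the paper localizes: on $E=\{u^s>s\}$ it shows $w^s=u^s-u$ is a continuous subsolution and invokes the maximum principle, and on $V=\{u>0\}$ it shows $w^s$ is a continuous supersolution and invokes the minimum principle. You obtain the same two inequalities by a single subtracted weak formulation tested against $(u-u^s)_+$ and $(u^s-u-s)_+$, respectively, and both tests hinge on exactly the same sign observations the paper uses ($\beta_s(u^s)=1$ on $\{u^s>s\}$; $\chi_{\{u>0\}}=1$ and $\beta_s\le 1$ on $\{u>u^s\}$). The energy-test route avoids the need for continuity and the strong minimum principle at this stage, so it is a bit more economical, while the paper's barrier-type argument is perhaps more geometric. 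The gradient estimate is, in both proofs, the same test against $u^s-u$; your pointwise case analysis is a slightly sharper version of the paper's bound $|\beta_s(u^s)-\chi_{\{u>0\}}|\le 1$. The only point worth spelling out a bit more is the admissibility of $|v|$ as a competitor, i.e., that $|v|-g\in H^1_0(U)$: this follows, for instance, from $\mathrm{tr}(|v|)=|\mathrm{tr}(v)|=|g|=g$ together with the trace characterization of $H^1_0$.
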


\begin{proof}
\emph{Step 1.}
We first argue that both~$u$ and~$u^s$ are H\"older continuous functions for every~$s>0$. This is implied by the De Giorgi-Nash-Moser theory since both~$f \chi_{\{u>0\}}$ and~$f \beta_s( u^s)$ belong to~$L^q(U)$ with~$q>\frac d2$. Notice also that~$x \mapsto  \beta_s( u^s(x))$ is H\"older continuous as well.

\smallskip

\emph{Step 2.} We next show that~$u^s$ is a nonnegative function, giving also~\eqref{e.approx}. To this end, let~$x_0~$ be a point of minimum 
for~$u^s$ and suppose, on the contrary, that~$u^s(x_0) < 0$. This implies that~$x_0$ is an interior point of~$U$ since~$g \geq 0$. By H\"older regularity of~$u$ and non-negativity of the boundary values, there exists positive~$r=r(s, x_0)$ such that~$u^s<0$ in~$B_r(x_0)$. It thus follows that 
\begin{equation*}
 \nabla \cdot (\b \nabla u^s)  =f \beta_s(u^s) <0 ~\textrm{ a.e. in } ~ B_r(x_0), 
 \end{equation*}
which means that~$u^s$ is a continuous weak supersolution in~$B_r(x_0)$ with an interior minimum point. By the strong minimum principle, implied by the weak Harnack inequality, we hence obtain that~$u^s$ has to be  a negative constant in~$B_r(x_0)$, which violates the fact that~$f \beta_s(u^s)<0$ a.e. through the equation; a contradiction. Therefore~$u^s$ is nonnegative in~$U$ and, consequently,~\eqref{e.approx} is valid. 

\smallskip

\emph{Step 3.}
We then show that~\eqref{e.penalized} holds. Let~$w^s:= u^s-u$. 
First we show that~$w^s \leq s$. For this, let~$E:= \{ u^s> s \} \cap U$ and observe that if~$u^s(x) \leq s$ for some~$x\in U$, then~$w^s(x) \leq s$ as well since~$u \geq 0$. 
Thus~$w_s \leq s$ on~$U \setminus E$. Moreover,~$w^s \leq s$ on~$\partial E$ and 
\begin{equation*}
 \nabla \cdot(\b \nabla  w^s)=f( 1- \chi_{\{u>0\}} )\geq 0~ \textrm{ in }~ E. 
 \end{equation*}
Therefore~$w_s$ is a continuous subsolution in an open set~$E$ and it attains its maximum on the boundary of~$E$, which implies~$w^s \leq s$ in the whole~$E$. In conclusion,~\eqref{e.penalized} is valid. 

\smallskip

\emph{Step 4.}
We prove that~$w^s = u^s-u \geq 0$. If~$u(x)=0$ for some~$x\in U$, then~$w^s(x) \geq 0$ by Step 1. We thus need to show that~$w^s \geq 0$ in the open  set 
$ V=\{ u> 0  \} \cap U$. Obviously,~$w^s \geq 0$ on~$\partial V$, and 
\begin{equation*}
 \nabla \cdot (\b \nabla  w^s) =f( \beta_s(u^s) -1) \leq 0 ~ \textrm{ in } V,
\end{equation*}
 which means that~$w^s$ is a weak supersolution in the set~$V$ and therefore it attains its minimum on the boundary of~$V$ and, therefore,~$w^s \geq 0$.
 
\smallskip

\emph{Step 5.}
It remains to prove~\eqref{e.penalizedgradient}. This follows by testing the subtracted equations of~$u^s$ and~$u$, and from~\eqref{e.penalized},
\begin{align}
\| \nabla  u^s-\nabla u\|^2_{L^2(U)} 
 & 
\leq \int_U (\nabla  u^s-\nabla u) \cdot \b (\nabla  u^s-\nabla u) \, dx 
\\ \notag  
& 
\leq \int_U f \left| \beta_s( u^s) -\chi_{\{u>0\}}\right| (  u^s- u)  \, dx 
\\ \notag 
&  
\leq
\| f\|_{L^1(U)} \| u-u^s\|_{L^{\infty}(U)} \leq \| f\|_{L^1(U)}  s.
\end{align}
The proof is complete.
\end{proof}

\subsection{Homogenization of  the penalized obstacle problem}

We now prove the homogenization result for the penalized problem.

\begin{proposition} \label{p.homogenization}
Let~$U$ be a bounded Lipschitz domain. Let~$q \in (d,\infty)$ and~$\delta \in (0,\infty]$. There exist constants~$C(U,d,\Lambda)<\infty$ and~$\alpha(\delta,q,d,\Lambda) \in (0,\tfrac12)$ such that the following statement is valid. Let~$f \in L^q(U)$ be a given function such that~$f > 0$ and~$g \in W^{1,2+\delta}(U)$,~$g \geq 0$. 
Let~$\varepsilon \in (0,1]$ and~$u^{s,\varepsilon}, u^s \in g+ W_0^{1,2}(U)$ solve
\begin{equation} \label{OPBepsilon}
\nabla \cdot\left( \a\left( \tfrac{\cdot}{\varepsilon}\right) \nabla u^{s,\varepsilon} \right)
=f\beta_s(u^{s,\varepsilon}), \textrm{ in } U, 
\end{equation}
and 
\begin{equation} \label{OPB}
\nabla \cdot\left( \ahom \nabla u^s \right)
=f\beta_s({u^s}), \textrm{ in } U, 
\end{equation}
respectively. Then we have that
\begin{align} \label{e.homog1}
\lefteqn{\left\| u^s-u^{s,\ep} \right\|_{L^2(U)} +\left\| \nabla u^s-\nabla u^{s,\ep} \right\|_{H^{-1}(U)} 
+\left\| \ahom \nabla u^s- \a^\ep \nabla u^{s,\ep} \right\|_{H^{-1}(U)}
} \qquad & 
\\ \notag 
& 
\leq
C \big( \mathcal{E} (\ep)\big)^\alpha \left( \| \nabla g \|_{L^{2+\delta}(U)} + \| f \|_{L^q(U)} \right) 
+ C \big( \mathcal{E} (\ep)\big)^{1-\alpha} \frac{\| \nabla g  \|_{L^2(U)}}{s} \| f \|_{L^q(U)} .
\end{align}
\end{proposition}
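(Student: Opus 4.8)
\emph{Strategy.} The plan is to compare $u^{s,\ep}$ not directly with $u^{s}$ but with a two–scale expansion of the homogenized penalized solution. Fix $r\in(0,1)$ and a cutoff $\eta\in C_0^\infty(U)$ with $\indc_{U_r}\le\eta\le\indc_U$, where $U_r:=\{x\in U:\dist(x,\partial U)>r\}$, and set
\[
\theta^\ep(x):=u^{s}(x)+\ep\,\eta(x)\sum_{k=1}^d\partial_{x_k}u^{s}(x)\,\phi_{e_k}\!\big(\tfrac{x}{\ep}\big),
\]
where $\phi_{e_k}$, $\mathbf{S}^{(e_k)}$, $\ahom_r$ are the coarsened correctors, flux correctors and coefficients at scale $m=\lceil-\log_3\ep\rceil$ from \eqref{e.corrector.intro}--\eqref{e.abar.intro}. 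The very computation carried out in the proof of \lref{wepandw.close} (see also \cite[Ch.~6]{AKMBook}) gives $\nabla\cdot(\a^\ep\nabla\theta^\ep)=\nabla\cdot(\ahom_r\nabla u^{s})+\nabla\cdot\g^\ep$, so, using $\nabla\cdot(\ahom\nabla u^{s})=f\beta_s(u^{s})$, the function $w^\ep:=u^{s,\ep}-\theta^\ep\in H_0^1(U)$ solves
\[
-\nabla\cdot(\a^\ep\nabla w^\ep)=-f\big(\beta_s(u^{s,\ep})-\beta_s(u^{s})\big)+\nabla\cdot\big[(\ahom_r-\ahom)\nabla u^{s}+\g^\ep\big].
\]

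\emph{The nonlinear forcing.} I would test this identity with $w^\ep$ and use ellipticity. The crucial point is that $\beta_s$ is nondecreasing and $\tfrac1s$-Lipschitz: writing
\[
\beta_s(u^{s,\ep})-\beta_s(u^{s})=\big[\beta_s(\theta^\ep+w^\ep)-\beta_s(\theta^\ep)\big]+\big[\beta_s(\theta^\ep)-\beta_s(u^{s})\big],
\]
the first bracket has the same sign as $w^\ep$, so (since $f>0$) it contributes with the favourable sign and may simply be discarded; only $\int_U f\,[\beta_s(u^{s})-\beta_s(\theta^\ep)]\,w^\ep$ survives, and it is controlled by $\tfrac1s\int_U|f|\,|u^{s}-\theta^\ep|\,|w^\ep|$. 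Since $u^{s}-\theta^\ep=\ep\,\eta\sum_k\partial_{x_k}u^{s}\,\phi_{e_k}(\cdot/\ep)$ with $\ep\|\phi_{e_k}(\cdot/\ep)\|_{\underline{L}^2(\cu_0)}\lesssim\mathcal{E}(\ep)$ by \eqref{e.error.intro} (exactly as in \lref{wepandw.close}), an application of Hölder's inequality (using $f\in L^q$, $q>d$), Sobolev's inequality $\|w^\ep\|_{L^{2^*}}\lesssim\|\nabla w^\ep\|_{L^2}$, the Meyers-improved integrability of the correctors, and the energy bound $\|\nabla u^{s}\|_{L^2}\lesssim\|\nabla g\|_{L^2}+\|f\|_{L^q}$ bounds this term, after a Young inequality absorbing a fraction of $\|\nabla w^\ep\|_{L^2}^2$, by $Cs^{-1}\mathcal{E}(\ep)\|f\|_{L^q}\big(\|\nabla g\|_{L^2}+\|f\|_{L^q}\big)$; trading a power of $\mathcal{E}(\ep)$ against the crude a priori estimate for $w^\ep$ then recovers the form $Cs^{-1}(\mathcal{E}(\ep))^{1-\alpha}\|\nabla g\|_{L^2}\|f\|_{L^q}$ appearing in \eqref{e.homog1}.

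\emph{The homogenization residual.} The remaining terms are handled by the standard two-scale analysis. One has $\|(\ahom_r-\ahom)\nabla u^{s}\|_{L^2}\lesssim\mathcal{E}(\ep)\|\nabla u^{s}\|_{L^2}$ by \eqref{e.error.intro}, while $\g^\ep$ decomposes into a boundary-layer part supported in $U\setminus U_r$, estimated via Hölder by $r^{\delta/(2+\delta)}\|\nabla u^{s}\|_{L^{2+\delta}(U)}$ (finite by \lref{Meyers}), and an interior part of the shape $(|\phi^\ep|+|\mathbf{S}^\ep|)\nabla^2u^{s}$ on $U_r$, estimated by $\mathcal{E}(\ep)$ times $\|\nabla^2u^{s}\|_{L^q(U_r)}$ (finite since $f\in L^q$, $q>d$, by interior Calder\'on--Zygmund) using again the improved integrability of $\phi^\ep,\mathbf{S}^\ep$. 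Optimising $r$ between these two contributions yields the first term $C(\mathcal{E}(\ep))^\alpha(\|\nabla g\|_{L^{2+\delta}}+\|f\|_{L^q})$ with some $\alpha=\alpha(\delta,q,d,\Lambda)\in(0,\tfrac12)$. Combining the resulting $H^1$-control of $w^\ep$ with the elementary bound $\|\theta^\ep-u^{s}\|_{L^2}\lesssim\mathcal{E}(\ep)\|\nabla u^{s}\|_{L^2}$ (triangle inequality for $\|u^{s}-u^{s,\ep}\|_{L^2}$) and with the standard $H^{-1}$-smallness of the purely oscillatory quantities $\eta\,\partial_{x_k}u^{s}\,(\nabla\phi_{e_k})(\cdot/\ep)$ and $\eta\,\partial_{x_k}u^{s}\,(\nabla\cdot\mathbf{S}^{(e_k)})(\cdot/\ep)$ --- obtained by integrating by parts against $\phi^\ep$ and $\mathbf{S}^\ep$ --- produces the claimed bounds for $\|\nabla u^{s}-\nabla u^{s,\ep}\|_{H^{-1}}$ and $\|\ahom\nabla u^{s}-\a^\ep\nabla u^{s,\ep}\|_{H^{-1}}$.

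\emph{Main obstacle.} The principal difficulty is precisely the discontinuous, solution-dependent right-hand side $f\beta_s(u^{s,\ep})$: one cannot two-scale expand directly because the forcing depends on the unknown. Monotonicity of $\beta_s$ is exactly what removes the self-interaction term with the correct sign, reducing matters to a linear homogenization estimate plus a correction of controlled size $\sim s^{-1}\mathcal{E}(\ep)$. The only other delicate point is the competition, inside the residual $\g^\ep$, between the $\ep$-boundary layer (where only $W^{1,2+\delta}$-regularity of $u^{s}$ is available, via Meyers) and the interior $W^{2,q}$-regularity of $u^{s}$ (via Calder\'on--Zygmund), which is what forces the homogenization exponent $\alpha$ to be strictly below $\tfrac12$.
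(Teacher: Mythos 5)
Your strategy is essentially the one the paper follows: build the two-scale expansion of $u^s$, test the difference with $u^{s,\ep}$ in the equation, use monotonicity of $\beta_s$ to discard the self-interaction term with the good sign, and then split the residual into a boundary-layer part near $\partial U$ and an interior part controlled through second-derivative bounds on $u^s$; the $H^{-1}$ estimates then follow by the same bookkeeping. The one genuine deviation is that you omit the mollification step: the paper actually expands around $\zeta_h \ast u^s$ rather than $u^s$, precisely so that the quantities multiplying $\ep\phi^\ep$ and $\mathbf{S}^\ep$ (namely $\nabla(\eta_r\,\partial_{x_k}\zeta_h\ast u^s)$) are in $L^\infty$, which pairs cleanly with the $\underline{L}^2$-control of the correctors that the hypothesis~\eqref{e.error.intro} provides, at the cost of a convolution error $h^{1-d/q}$ to be optimised alongside $r$. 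Your route instead relies on ``Meyers-improved integrability'' of the correctors to pair the $\underline{L}^2$-small $\phi^\ep,\mathbf{S}^\ep$ against $\nabla^2u^s\in L^q$; that is not literally what the assumptions give you (they only make $\phi^\ep,\mathbf{S}^\ep$ small in $L^2$, not in $L^p$ for $p>2$), and as written your bound $\mathcal{E}(\ep)\,\|\nabla^2u^s\|_{L^q(U_r)}$ is too good. The fix is to interpolate: the uniform $H^1$-bound on the correctors gives $L^{2^*}$-boundedness, so $L^p$-smallness with a degraded power $\mathcal{E}(\ep)^\theta$, $\theta\in(0,1)$, follows for the needed $p=2q/(q-2)<2^*$ (here $q>d$ is exactly what makes this exponent admissible). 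Since the Proposition only claims some $\alpha<\tfrac12$, the degradation is harmless and your proof can be closed this way; but the interpolation step should be made explicit rather than invoked as ``improved integrability,'' and one should note that the paper's mollification avoids the issue entirely, which is why it only ever needs $\underline{L}^2$-smallness of the correctors.
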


\begin{proof}
Let~$m = \lceil - \log_3 \ep \rceil$. The idea of the proof is to compare~$u^{s,\ep}$ to the following function, so-called \emph{two-scale expansion} of~$u^{s}$, defined as
\begin{equation}
w^{s,\ep} (x):=u^{s}(x)+\varepsilon \eta_r(x) \sum_{k=1}^{d}\partial_{x_k}(\zeta_h \ast u^{s} )(x) \phi_{m,e_k}\left(\tfrac{x}{\varepsilon}\right).
\end{equation}
Here~$\phi_{m,e}$ is the finite volume corrector corresponding to~$\cu_m$ and~$e \in \R^d$ defined in~\eqref{e.corrector.m}. 
Moreover,~$\zeta_h$ is a standard smooth mollifier supported in~$B_h$ satisfying
\begin{align*}
\int_{B_h} \zeta_h = 1, ~ \|  \nabla^k \zeta_h\| \leq C(k,d) h^{-k-d},~ k \in \N_0,
\end{align*}
and~$\eta_r\in C_c^\infty (U)$ is a smooth cutoff function satisfying
\begin{align*}
0\leq \eta_r\leq1,~ \eta_r=1 \textrm{ in } U_{2r},~ \eta_r \equiv 0 \textrm{ in } U \setminus U_r, \quad
| \nabla^k \eta_r| \leq C(k,d,U) r^{-k},
\end{align*}
and~$U_r:=\{x \in U; ~\dist(x,\partial U)>r\}$. The parameters~$h,r> 0$ will be chosen later on.

\smallskip

\emph{Step 1.} Bounds for~$u_s$ and for its convolution. First, since~$f\beta_s({u^s}) \in L^q(U)$, we have by the standard Calder\'on-Zygmund theory that there exists~$C(U,d,\Lambda) <\infty$ such that, for every~$r >0$, 
\begin{align} \label{e.reg0}
\| \nabla^2 u_s \|_{L^q(U_{r/2})} \leq C r^{-d/2} \left( r^{-1} \| \nabla g  \|_{L^2(U)} + \| f \|_{L^q(U)} \right)
.
\end{align}
Using Young's inequality for convolutions, we have that, for every~$h \in (0,r/2\wedge 1)$, 
\begin{align} \label{e.reg1}
\left\| \nabla \big( \eta_r \nabla (\zeta_h  \ast u_s) \big) \right\|_{L^\infty(U)} 
& 
\leq 
C r^{-d/2} \left( r^{-1} \| \nabla g  \|_{L^2(U)} + \| f \|_{L^q(U)} \right)
\\  \notag 
& 
\leq
C h^{-d/q} r^{-d/2} \left( r^{-1} \| \nabla g  \|_{L^2(U)} + \| f \|_{L^q(U)} \right) 
.
\end{align}
Moreover, using an elementary computation (see~\cite[Lemma 6.8]{AKMBook}) we see that 
\begin{align} \label{e.reg2}
\left\| \eta_r \nabla ( u^s - \zeta_h \ast u^s) \right\|_{L^2(U)} 
& \leq
C |U|^{1/2-1/q} h \|\nabla^2 u  \|_{L^q(U_{r/2})} 
\\ \notag 
& \leq C h^{1-d/q} r^{-d/2} \left( r^{-1} \| \nabla g  \|_{L^2(U)} + \| f \|_{L^q(U)} \right) 
.
\end{align}
Finally, by the Meyers' estimate, see Lemma \ref{l.basicregularity}, we have higher global integrability for the gradient, namely that there exist constants~$C(U,d,\Lambda)<\infty$ and~$\sigma(d,\Lambda) \in (0,\delta]$ such that 
\begin{align*} 
\left\| \nabla u^s \right\|_{L^{2+\sigma}(U)} 
\leq 
C \left( \| g  \|_{W^{1,2+\delta}(U)} + \| f \|_{L^q(U)} \right)
.
\end{align*}
which gives us, by H\"older's inequality, that
\begin{align} \label{e.reg3}
\left\| \nabla u^s \right\|_{L^{2}(U \setminus U_{2r})} 
\leq 
C r^{\frac{\sigma}{2+\sigma}} \left( \| g  \|_{W^{1,2+\delta}(U)} + \| f \|_{L^q(U)} \right)
.
\end{align}

\smallskip

\emph{Step 2.}
We derive the following estimate
\begin{equation} \label{e.betasnorm}
\| \nabla \cdot \left( \a(\tfrac{\cdot}{\varepsilon}) \nabla w^{s,\varepsilon}\right) -f\beta_s(u^s) \|_{H^{-1}(U)}
\leq 
C \big( \mathcal{E}(\ep) \big)^{\alpha} \left( \| g  \|_{W^{1,2+\delta}(U)} + \| f \|_{L^q(U)} \right)
\end{equation}
with constants~$C(U,d,\Lambda)<\infty$ and~$\alpha(\delta,d,\Lambda) \in (0,1)$. 
By a direct computation, we get
\begin{align}
\notag
\nabla w^{s,\varepsilon} 
& 
= \nabla (u^s - \zeta_h \ast u^s) + (1-\eta_r) \nabla \zeta_h \ast u^s 
\\ \notag 
&
\quad
+
\eta_r \sum_{k=1}^d \big(e_k + \nabla \phi_{e_k}(\tfrac{\cdot}{\varepsilon}) \big) \partial_{x_k} \zeta_h \ast u^s
+ 
\varepsilon \sum_{k=1}^d \phi_{e_k}(\tfrac{\cdot}{\varepsilon})\nabla \left( \eta_r \partial_{x_k}\zeta_h \ast u^s \right)
.
\end{align}
Since~$\phi_{m,e_k}$ solves the equation~$\nabla \cdot \a(\tfrac{\cdot}{\varepsilon}) \big(e_k + \nabla \phi_{m,e_k}(\tfrac{\cdot}{\varepsilon}) \big) = 0$, we obtain, by rearranging the terms, that
\begin{align}
\nabla \cdot \left(\a(\tfrac{\cdot}{\varepsilon}) \nabla w^{s,\varepsilon} \right) 
& 
=
\sum_{k=1}^d \nabla ( \eta_r \partial_{x_k} \zeta_h \ast u^s) \cdot \a(\tfrac{\cdot}{\varepsilon}) 
(e_k+ \nabla \phi_{m,e_k}(\tfrac{\cdot}{\varepsilon}) ) 
\\ \notag
& \qquad
+ 
\nabla \cdot \a(\tfrac{\cdot}{\varepsilon}) \left( (1-\eta_r) \nabla u^s 
+
\eta_r \nabla (u^s - \zeta_h \ast u^s) \right) 
\\ \notag
& \qquad 
+
\varepsilon \sum_{k=1}^d \nabla \cdot \a(\tfrac{\cdot}{\varepsilon}) \left( \phi_{m,e_k}(\tfrac{\cdot}{\varepsilon}) \nabla ( \eta_r \partial_{x_k} \zeta_h \ast u^s) \right) .
\end{align}
On the other hand, using the fact that~$\ahom$ has constant coefficients together with the equation for~$u^s$, we deduce that 
\begin{align}
\notag
\sum_{k=1}^d \nabla (\eta_r \partial_{x_k} \zeta_h \ast u^s) \cdot \ahom e_k
& 
=
\nabla \cdot (\eta_r \ahom \nabla \zeta_h \ast u^s )
\\\notag 
&
=
f\beta_s(u^s) - \nabla \cdot (\ahom \eta_r \nabla ( u^s - \zeta_h \ast u^s )) 
- \nabla \cdot ( (1-\eta_r) \ahom \nabla u^s ) 
.
\end{align}
Therefore, we get
\begin{align*} 
\nabla \cdot \left(\a(\tfrac{\cdot}{\varepsilon}) \nabla w^{s,\varepsilon } \right) 
- f \beta_s(u^s)
& = 
\sum_{k=1}^d \nabla(\eta_r \partial_{x_k}u^s) \cdot \left( \a(\tfrac{\cdot}{\varepsilon}) (e_k +\nabla \phi_{m,e_k}(\tfrac{\cdot}{\varepsilon}))-\ahom e_k\right)
\\ \notag
& \quad 
+\nabla \cdot \left( (\a(\tfrac{\cdot}{\varepsilon})-\ahom) \big( (1-\eta_r )\nabla u^s + \eta_r \nabla ( u^s - \zeta_h \ast u^s ) \big) \right) 
\\ \notag
& \quad 
+
\varepsilon \sum_{k=1}^d \nabla \cdot \left( \phi_{m,e_k}(\tfrac{\cdot}{\varepsilon}) 
\a(\tfrac{\cdot}{\varepsilon}) \nabla(\eta_r \partial_{x_k}u^s)\right) .
\end{align*}
Furthermore, we write~$\a(\tfrac{\cdot}{\varepsilon}) (e_k +\nabla \phi_{e_k}(\tfrac{\cdot}{\varepsilon}))-\ahom e_k$ by means of the finite volume skew-symmetric flux corrector~$\mathbf{S}_{ij}^{e_k,m}$ defined in~\eqref{e.fluxcorrector} so that, for~$i \in \{1,\ldots,d\}$, 
\begin{align*} 
\ep \sum_{j=1}^d \partial_j \mathbf{S}_{m,ij}^{(e_k)}(\tfrac{\cdot}{\varepsilon}) = \left( \a(\tfrac{\cdot}{\varepsilon}) (e_k +\nabla \phi_{m,e_k}(\tfrac{\cdot}{\varepsilon}))-\ahom e_k \right)_i .
\end{align*}
By skew-symmetry of~$\mathbf{S}_m^{(e_k)}$, we obtain
\begin{align*} 
\sum_{k=1}^d \nabla(\eta_r \partial_{x_k}u^s) \cdot \left( \a(\tfrac{\cdot}{\varepsilon}) (e_k +\nabla \phi_{m,e_k}(\tfrac{\cdot}{\varepsilon}))-\ahom e_k\right)
= 
\ep \sum_{k=1}^d \nabla \cdot \left( \mathbf{S}_m^{(e_k)} \nabla(\eta_r \partial_{x_k}u^s) \right)
.
\end{align*}
It follows that
\begin{align*} 
\lefteqn{
\| \nabla \cdot \left( \a(\tfrac{\cdot}{\varepsilon}) \nabla w^{s,\varepsilon} \right) -f\beta_s( u^s ) \|_{H^{-1}(U)} 
} \qquad &
\\ \notag
& \leq
\left\| (\a(\tfrac{\cdot}{\varepsilon})-\ahom) \big( (1-\eta_r )\nabla u^s + \eta_r \nabla ( u^s - \zeta_h \ast u^s ) \big) \right\|_{L^2(U)} 
\\ \notag
& 
+ \varepsilon 
\sum_{k=1}^d 
\left\| \left( \a \phi_{m,e_k} + \mathbf{S}_m^{(e_k)} \right)(\tfrac{\cdot}{\varepsilon}) 
\nabla(\eta_r \partial_{x_k}u^s) 
\right\|_{L^2(U)}.
\end{align*}
Employing the definition of~$\mathcal{E}(\ep)$ in , we finally derive
\begin{align} \notag 
\lefteqn{
\| \nabla \cdot \left( \a(\tfrac{\cdot}{\varepsilon}) \nabla w^{s,\varepsilon} \right) -f\beta_s( u^s ) \|_{H^{-1}(U)}
} \quad 
&
\\ \notag 
&
\leq 
C \left( 
\mathcal{E}(\ep) \left\| \nabla(\eta_r \partial_{x_k}u^s) \right\|_{L^\infty(U)}
+ \left\| \nabla u^s \right\|_{L^{2}(U \setminus U_{2r})} + \left\| \eta_r \nabla ( u^s - \zeta_h \ast u^s) \right\|_{L^2(U)} 
\right)
\\ \notag 
&
\leq 
C \left(  r^{-d/2-1} \big( \mathcal{E}(\ep) h^{-d/q} + h^{1-d/q} \big) + r^{\frac{\sigma}{2+\sigma}} \right) 
\left( \| g  \|_{W^{1,2+\delta}(U)} + \| f \|_{L^q(U)} \right)
.
\end{align}
Optimizing in~$h$ and~$r$, that is choosing~$h = \mathcal{E}(\ep) \wedge \frac12$ and~$r = h^{\beta}$ with the exponent~$\beta = \frac{q-d}{q} \big( 1 + \frac d2 + \frac{\sigma}{2+\sigma} \big)$, we obtain the desired inequality~\eqref{e.betasnorm} with~$\alpha = \frac{\beta \sigma}{2+\sigma}$. 

\smallskip

\emph{Step 3.} We show that 
\begin{equation} \label{e.h1norm}
\| \nabla u^{s,\varepsilon} - \nabla w^{s,\varepsilon} \|^2_{L^2(U)} 
\leq 
C 
\| \nabla \cdot \a^\ep \nabla w^{s,\varepsilon}-f \beta_s( w^{s,\varepsilon} )\|_{H^{-1}(U)}
\end{equation}
By the ellipticity assumption and the fact that~$u^{s,\varepsilon} - w^{s,\varepsilon} \in H_0^1(U)$, we get
\begin{align}
\notag
\lefteqn{
\| \nabla u^{s,\varepsilon} - \nabla w^{s,\varepsilon} \|^2_{L^2(U)} 
} 
\qquad &
\\ \notag
& 
\leq 
\int_U \left( \nabla u^{s,\varepsilon} - \nabla w^{s,\varepsilon} \right) \cdot \a\left(\tfrac{\cdot}{\varepsilon}\right)
\left( \nabla u^{s,\varepsilon} - \nabla w^{s,\varepsilon} \right)
\\ \notag
& 
\leq 
- \int_U \left( u^{s,\varepsilon} - w^{s,\varepsilon} \right) f ( \beta_s( u^{s,\varepsilon}) - \beta_s( w^{s,\varepsilon} ) )
\\ \notag
& 
+
\| \nabla \cdot \left( \a(\tfrac{\cdot}{\varepsilon}) \nabla w^{s,\varepsilon}\right) - f \beta_s( w^{s,\varepsilon} ) \|_{H^{-1}(U)}
\| \nabla u^{s,\varepsilon} - \nabla w^{s,\varepsilon} \|_{L^{2}(U)}
.
\end{align}
Since the second term on the right is negative by the monotonicity of~$\beta_s(\cdot)$, we obtain~\eqref{e.h1norm}.

\smallskip

\emph{Step 4.}
It follows from~\eqref{e.betasnorm} and~\eqref{e.h1norm}, by the triangle inequality, that
\begin{align}
\notag
\lefteqn{
\| \nabla u^{s,\varepsilon} - \nabla w^{s,\varepsilon} \|^2_{L^2(U)} 
} \qquad & 
\\ \notag
& 
\leq 
C \| \nabla \cdot \left( \a(\tfrac{\cdot}{\varepsilon}) \nabla w^{s,\varepsilon}\right) -f\beta_s(u^s) \|_{H^{-1}(U)} 
+
C \| f \beta_s( w^{s,\varepsilon}) -f \beta_s( u^{s} )\|_{H^{-1}(U)} .
\end{align}
We have, by H\"older's inequality and the Sobolev inequality, that, for every~$g \in H_0^1(U)$ with~$\|  \nabla g \|_{L^2(U)} \leq 1$,
\begin{align*} 
\int_U ( f \beta_s( w^{s,\varepsilon}) - f \beta_s( u^{s} )) g
& 
\leq 
\| f \|_{L^{d}(U)} \| \beta_s( w^{s,\varepsilon}) - \beta_s( u^{s} )\|_{L^{2}(U)} \| g \|_{L^{\frac{2d}{d-2}}(U)}
\\  \notag
& 
\leq 
\frac Cs \| f \|_{L^{d}(U)} \| w^{s,\varepsilon} - u^{s} \|_{L^{2}(U)} \| \nabla g \|_{L^{2}(U)}
\\ \notag
& 
\leq 
\frac Cs \| f \|_{L^{q}(U)} \| w^{s,\varepsilon} - u^{s} \|_{L^{2}(U)} 
.
\end{align*}
Furthermore, we have by the definitions of~$w^{s,\varepsilon}~$and~$\mathcal{E}(\ep)$, together with~\eqref{e.reg1}, that 
\begin{align*} 
\| w^{s,\varepsilon} - u^{s} \|_{L^{2}(U)} 
\leq 
C \mathcal{E}(\ep) \| \eta_r \nabla \zeta_h \ast u^{s} \|_{L^{\infty}(U)} 
\leq 
C \mathcal{E}(\ep) r^{-d/2} \| \nabla g  \|_{L^2(U)} 
.
\end{align*}
The proof is complete.
\end{proof}

We are ready to prove Theorem~\ref{t.homogenization}.

\begin{proof}[Proof of Theorem~\ref{t.homogenization}]
Combine Lemma~\ref{l.penalization} with Proposition~\ref{p.homogenization} by optimizing in~$s$. 
\end{proof}

\bigskip

\section{Large-scale $C^{1,1}$-regularity of the solution}

In this section we prove Theorem~\ref{t.C11}, which says that the solution to normalized obstacle problem satisfies $C^{1,1}$-estimate on large-scales. Let $ u^\ep \geq 0$ solve the variational inequality:
\begin{align*}
\nabla \cdot (\a^\ep \nabla u^\ep) =f \chi_{ \{ u^\ep>0\} } \textrm{ in } U.
\end{align*}
The idea of the proof is to combine boundedness of solutions close to the free boundary, provided by~\eqref{e.quadratic}, together with the regularity of the equation $\nabla \cdot (\a^\ep \nabla u^\ep) =f$, proved in Lemma~\ref{l.auxiliary} below. Indeed, if $ x_0\in \varGamma({u^\ep})$, then, by~\eqref{e.quadratic}, 
\begin{equation} \label{e.epquadratic}
\sup_{B_r(x_0)} u^\ep \leq C \| f \|_{L^\infty}  r^2
\end{equation}
provided that $ B_{2r} (x_0) \subset U$. Here the constant $C>0$ depends on $\Lambda, d$. It is known that the quadratic decay bound above does not yet imply $ C^{1,1}$-regularity of solutions, in general, if~$f$ belongs merely to $L^\infty(B_1)$. In fact, even in the case of smooth coefficients, we need to impose further assumption on~$f$ in order to obtain~$C^{1,1}$-regularity. Thus, we assume that~$f$ is H\"older continuous, which together with the homogenization then yields the large-scale $C^{1,1}$-regularity for $ u^\ep$. The terminology ``large-scale" stems from the rescaling of the problem. If we set~$R = \ep^{-1}$, we may rescale~$u^\ep$ to be the solution in a very large ball $B_{2R}$:
\begin{align*} 
\nabla \cdot (\a \nabla u) = f(\ep \cdot) \chi_{\{u >0 \}}  \quad \mbox{ in } B_{2R}. 
 \end{align*}

\smallskip

Next, recall definition of the linear space of $\ahom$-harmonic functions with controlled growth,  \eqref{e.Ak.bar},
\begin{equation*} 
\overline{\mathcal{A}}_k:=\{ u \in H^1_{loc}( \R^d)
\, : \,
-\nabla \cdot (\ahom \nabla u )=0, ~ \lim_{r\to \infty} r^{-(k+1)} \|u \| _{{\underline{L}^2(B_{r})}}=0
\}.
\end{equation*}
By the Liouville theorem,~$\overline{\mathcal{A}}_k$ consists only of~$\ahom$-harmonic polynomials of degree at most~$k$. Recall also the definition in~\eqref{e.Ak}
\begin{align*} 
{\mathcal{A}}_k
:=
\bigl\{ u \in H^1_{loc}( \R^d)
\, : \,
-\nabla \cdot (\a \nabla u )=0, ~ \lim_{r\to \infty} r^{-k-1} \| u \| _{{\underline{L}^2(B_{r})}}=0
\bigr
\}.
\end{align*}
As before, we use the notation $\psi^\ep = \psi(\tfrac \cdot\ep)$ for the rescaled members of $ \mathcal{A}_k$.

Denote the minimal scale by
\begin{align}  \label{e.minimalscale}
R_{\sigma,\alpha} := \inf \bigl\{ r \in (0,\infty) \, : \, 
\bigl(  \mathcal{E}\bigl( \tfrac{\ep}{r} \bigr)\bigr)^\alpha \leq  \sigma.
\bigr\}
\end{align}

The following proposition is essentially~\cite[Theorem 3.8]{AKMBook}. Minor modifications are needed for its proof, but the changes are almost entirely typographical. If the reader wants to go through the proof very carefully, notice that~\eqref{e.algebraic} implies that, for every $\alpha \in (0,1)$, 
\begin{align}  \label{e.haar.silly}
\int_{r}^\infty \bigl( \mathcal{E}\bigl( \tfrac{\ep}{s} \bigr)\bigr)^\alpha \, \frac{ds}{s}  
\leq 
\frac{1}{\alpha \gamma} \bigl( \mathcal{E}\bigl( \tfrac{\ep}{r} \bigr)\bigr)^\alpha
\,.
\end{align}
We omit the rest of the details of the proof. 
\begin{proposition} \label{p.regularity.homog}
Let $\gamma \in (0,1]$.  Assume that $\mathcal{E}$ satisfies~\eqref{e.algebraic} and let $R_{\sigma,\alpha}$ be defined in~\eqref{e.minimalscale}. There exist constants $\alpha,\sigma \in (0,1]$ and $C< \infty$, all depending only on $(\gamma,d,\Lambda)$, such that the following claims are valid. 
\begin{itemize}
\item 
Let $k\in\N$ and $p \in \Ahom_k$. Then there exists $\psi \in \A_k$ such that, for every $r \in [ R_{\sigma,\alpha},\infty)$,
\begin{align}  \label{e.pvspsi}
\left\| p - \psi^\ep \right\|_{\underline{L}^2 \left( B_{r} \right)}
\leq 
C \bigl( \mathcal{E}\bigl( \tfrac{\ep}{r} \bigr)\bigr)^\alpha \left\| p \right\|_{\underline{L}^2 \left( B_{r} \right)}
\end{align}

\item Let $k\in\N$ and $ \psi \in \A_k$. Then there exists $p \in \Ahom_k$ such that~\eqref{e.pvspsi} holds for every $r \in [ R_{\sigma,\alpha},\infty)$. 

\item 
If $u^\ep$ solves $-\nabla \cdot \a^\ep \nabla u^\ep = 0$ in $B_R$ with $R \geq R_{\sigma,\alpha}$, then, for every $r \in [ R_{\sigma,\alpha},R)$ and $k \in \N$, there exists $\psi \in \A_k$ such that
\begin{align}  \label{e.regularity.homog}
\left\| u^\ep - \psi^\ep \right\|_{\underline{L}^2 \left( B_{r} \right)} \leq C\left(  \frac{r}{R}  \right)^{k+1}
\left\| u^\ep  \right\|_{\underline{L}^2 \left( B_{R} \right)}
\end{align}

\end{itemize}
\end{proposition}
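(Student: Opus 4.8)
The plan is to run the large-scale (Avellaneda--Lin $C^{k,1}$) regularity scheme in the quantitative form of~\cite[Chapter~3]{AKMBook}: the three bullets are proved together, by induction on $k$, and the only ingredients special to the present setting are that the Dirichlet homogenization error is measured by $\mathcal{E}$ and that, by~\eqref{e.algebraic} (whose exponent is the $\gamma$ of the statement), the quantity $\bigl(\mathcal{E}(\tfrac{\ep}{s})\bigr)^{\alpha}$ is summable over geometric sequences of scales, with sum controlled at the bottom scale, cf.~\eqref{e.haar.silly}. The structural inputs are Lemma~\ref{l.wepandw.close} (quantitative homogenization of the Dirichlet problem, applied with $w^\ep$ an $\a^\ep$-harmonic function and $w$ the $\ahom$-harmonic function having the same boundary data), the Caccioppoli inequality, and the smoothness/interior Cauchy estimates for $\ahom$-harmonic functions.

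\emph{One-step improvement and the third bullet.} I would first establish that there is a fixed $\theta=\theta(d,\Lambda)\in(0,\tfrac12)$ such that, for $\a^\ep$-harmonic $u^\ep$ in $B_r$ with $r\geq R_{\sigma,\alpha}$,
\begin{align*}
\inf_{\psi\in\A_k}\| u^\ep-\psi^\ep\|_{\underline{L}^2(B_{\theta r})}
\leq
\tfrac12\,\theta^{k+1}\,\inf_{\psi\in\A_k}\| u^\ep-\psi^\ep\|_{\underline{L}^2(B_{r})}
+
C\bigl(\mathcal{E}(\tfrac{\ep}{r})\bigr)^{\alpha}\| u^\ep\|_{\underline{L}^2(B_{r})}
\,.
\end{align*}
One subtracts a near-optimal $\A_k$-approximant $\psi_0^\ep$, homogenizes the $\a^\ep$-harmonic remainder via Lemma~\ref{l.wepandw.close} (its right-hand side handled with Caccioppoli and the interior $C^2$-bound for the $\ahom$-harmonic replacement, after optimizing the truncation radius there), and replaces the $\ahom$-harmonic remainder by its degree-$k$ Taylor polynomial, which costs the factor $\theta^{k+1}$; that polynomial is transported back into $\A_k$ using the inductive correspondence for its lower-order part together with the top-order heterogeneous polynomial built at this stage. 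Iterating over the radii $\theta^{-1}r,\theta^{-2}r,\dots$ up to $R$ and summing the errors by~\eqref{e.haar.silly} gives, after taking $\sigma$ small, the estimate~\eqref{e.regularity.homog}; that a \emph{single} $\psi\in\A_k$ works for all $r\in[R_{\sigma,\alpha},R)$ follows because the near-optimal approximants at consecutive scales are Cauchy in the finite-dimensional space $\A_k$ (in any fixed norm), hence convergent, and this argument simultaneously yields $\dim\A_k=\dim\Ahom_k$.

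\emph{The correspondence $\Ahom_k\leftrightarrow\A_k$.} For the first bullet, given $p\in\Ahom_k$ let $v_R$ be the $\a^\ep$-harmonic function in $B_R$ with $v_R=p$ on $\partial B_R$; since $p$ is itself the $\ahom$-harmonic function with that boundary data, Lemma~\ref{l.wepandw.close} gives $\|v_R-p\|_{\underline{L}^2(B_R)}\leq C\bigl(\mathcal{E}(\tfrac{\ep}{R})\bigr)^{\alpha}\|p\|_{\underline{L}^2(B_R)}$, and applying~\eqref{e.regularity.homog} to $v_R$ produces $\psi_R\in\A_k$ with $\|v_R-\psi_R^\ep\|_{\underline{L}^2(B_r)}\leq C(r/R)^{k+1}\|p\|_{\underline{L}^2(B_R)}$ for $r\in[R_{\sigma,\alpha},R)$; letting $R\to\infty$, compactness in the finite-dimensional $\A_k$ extracts a limit $\psi$, and combining the two bounds at a fixed scale $r$ gives~\eqref{e.pvspsi}. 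The second bullet is symmetric: for $\psi\in\A_k$ the function $\psi^\ep$ solves $-\nabla\cdot\a^\ep\nabla\psi^\ep=0$ in $B_R$, so Lemma~\ref{l.wepandw.close} compares it with the $\ahom$-harmonic $\bar v_R$ having boundary data $\psi^\ep$ on $\partial B_R$, and the classical constant-coefficient Campanato estimate for $\bar v_R$ extracts $p_R\in\Ahom_k$; passing to $R\to\infty$ yields the required $p\in\Ahom_k$. Mutual inversion of these maps up to the stated error reconfirms $\dim\A_k=\dim\Ahom_k$ and closes the induction.

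\emph{Expected main obstacle.} The delicate part is the bookkeeping of the interlocking induction: the correspondence at order $k$ is needed to carry out the one-step improvement at order $k$, and conversely, so the three bullets must be threaded together at each step (the lowest-order case being where one first identifies $\A_1$ as the span of the constants and the whole-space correctors, obtained as $\mathcal{E}$-controlled limits of the $\phi_{m,e}$). Within this, the point requiring genuine care is the uniformity of $\sigma$ and $C$ in $k$: it hinges on the Taylor-approximation gain $\theta^{k+1}$ dominating the $k$-growing constant in the interior Cauchy estimates for $\ahom$-harmonic functions for all $k$ at once, which forces $\theta$ to be chosen small depending only on $(d,\Lambda)$; together with the Cauchy-in-scale selection of $\psi$, which is also what produces finite-dimensionality of $\A_k$. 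The remaining work---controlling the lower-order terms of Lemma~\ref{l.wepandw.close} (those with $r^{-1}\|\nabla w\|_{L^\infty(U_r)}$ and $\|\nabla^2 w\|_{L^\infty(U_r)}$) by interior estimates with an appropriate truncation radius, and verifying~\eqref{e.haar.silly} from~\eqref{e.algebraic}---is routine and is precisely the ``typographical'' modification of~\cite[Chapter~3]{AKMBook} mentioned in the text.
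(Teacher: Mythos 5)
Your proposal follows the route the paper itself indicates. The paper gives no proof of Proposition~\ref{p.regularity.homog}, citing instead~\cite[Theorem 3.8]{AKMBook} and noting only that the changes are ``almost entirely typographical'' and that~\eqref{e.algebraic} yields the summability bound~\eqref{e.haar.silly}; your sketch is a careful and correct fill-in of precisely that Avellaneda--Lin/AKM large-scale regularity scheme, namely the one-step harmonic approximation via Lemma~\ref{l.wepandw.close}, the excess decay with the $\theta^{k+1}$ gain, summation over scales by~\eqref{e.haar.silly}, and the interlocking $\Ahom_k\leftrightarrow\A_k$ correspondence with the Cauchy-in-scale selection of a single $\psi\in\A_k$.
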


The next statement is the content of a much more general statement given in~\cite[Exercise 3.12]{AKMBook} with $k = 1$. Since the proof is not entirely trivial, we present the proof for the sake of completeness. 
\begin{lemma} \label{l.auxiliary}
Under the assumptions of Proposition~\ref{p.regularity.homog}, there exists a finite constant~$C(\gamma,d,\Lambda)$ such that if $R \in [ R_{\sigma,\alpha},\infty)$ and $u^\ep$ solves $\nabla\cdot \a \nabla u^\ep = f(\ep \cdot) \geq 0 $, in $B_R$, then there exists $\psi \in \A_1$ such that, for every $r \in [ R_{\sigma,\alpha},R]$, 
\begin{align*} 
\left\| u^\ep - \psi^\ep \right\|_{\underline{L}^2 \left( B_{r} \right)} 
\leq  
C\Bigl( \frac rR \Bigr)^2 \| u^\ep \|_{\underline{L}^2(B_{R})}
+
C R^2 \| f \|_{C^{0,\gamma}(B_R)} .
\end{align*}

\end{lemma}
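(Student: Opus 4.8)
The plan is to split $u^\ep$ into its $\a^\ep$-harmonic part and a correction that absorbs the right-hand side, and then apply Proposition~\ref{p.regularity.homog} to the harmonic part together with a simple Caccioppoli/De Giorgi-type estimate for the correction. First I would let $v^\ep \in u^\ep + H_0^1(B_R)$ solve the homogeneous equation $\nabla \cdot \a \nabla v^\ep = 0$ in $B_R$ with $v^\ep = u^\ep$ on $\partial B_R$. Then $w^\ep := u^\ep - v^\ep \in H_0^1(B_R)$ solves $\nabla \cdot \a \nabla w^\ep = f(\ep \cdot)$ in $B_R$, so testing with $w^\ep$ itself, using ellipticity and Poincar\'e, gives $\|\nabla w^\ep\|_{\underline L^2(B_R)} \leq C R \|f(\ep\cdot)\|_{\underline L^\infty(B_R)} \leq C R \|f\|_{C^{0,\gamma}(B_R)}$, whence $\|w^\ep\|_{\underline L^2(B_R)} \leq C R^2 \|f\|_{C^{0,\gamma}(B_R)}$ by Poincar\'e again. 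One also needs $\|v^\ep\|_{\underline L^2(B_R)} \leq \|u^\ep\|_{\underline L^2(B_R)} + C R^2 \|f\|_{C^{0,\gamma}(B_R)}$, which follows from the triangle inequality.

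Next I would like to apply~\eqref{e.regularity.homog} of Proposition~\ref{p.regularity.homog} to $v^\ep$ with $k=1$: there exists $\psi \in \A_1$ such that, for every $r \in [R_{\sigma,\alpha},R)$,
\begin{align*}
\left\| v^\ep - \psi^\ep \right\|_{\underline L^2(B_r)} \leq C \Bigl( \frac rR \Bigr)^2 \left\| v^\ep \right\|_{\underline L^2(B_R)} \leq C \Bigl( \frac rR \Bigr)^2 \Bigl( \|u^\ep\|_{\underline L^2(B_R)} + C R^2 \|f\|_{C^{0,\gamma}(B_R)} \Bigr).
\end{align*}
Then by the triangle inequality,
\begin{align*}
\left\| u^\ep - \psi^\ep \right\|_{\underline L^2(B_r)} \leq \left\| v^\ep - \psi^\ep \right\|_{\underline L^2(B_r)} + \left\| w^\ep \right\|_{\underline L^2(B_r)},
\end{align*}
and since $\|w^\ep\|_{\underline L^2(B_r)} \leq C (R/r)^{d/2} \|w^\ep\|_{\underline L^2(B_R)} \leq C (R/r)^{d/2} R^2 \|f\|_{C^{0,\gamma}(B_R)}$, this last term is not yet in the desired form; so instead I would control $w^\ep$ directly on $B_r$. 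The cleanest route is to note $(r/R)^2 \leq 1$ and $\|w^\ep\|_{\underline L^2(B_r)} \leq C\|w^\ep\|_{L^\infty(B_r)}$, and then bound $\|w^\ep\|_{L^\infty(B_{R/2})}$ by De Giorgi–Nash–Moser (Theorem~\ref{t.DGNM}) applied to the equation $\nabla\cdot\a\nabla w^\ep = f(\ep\cdot)$, giving $\|w^\ep\|_{L^\infty(B_{R/2})} \leq C(\|w^\ep\|_{\underline L^2(B_R)} + R^2 \|f(\ep\cdot)\|_{\underline L^\infty(B_R)}) \leq C R^2 \|f\|_{C^{0,\gamma}(B_R)}$; for $r \in (R/2, R]$ one uses the crude $L^2$ bound with the constant absorbed. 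Assembling these pieces yields the claimed inequality. The main subtlety will be the bookkeeping near $r \sim R$ and making sure that the ball-comparison constants for $w^\ep$ are controlled purely in terms of $\|f\|_{C^{0,\gamma}(B_R)}$; this is why passing through the $L^\infty$ bound (rather than a raw $L^2$-on-$B_R$ estimate) is preferable, since $L^\infty$ is scale-consistent across $B_r \subset B_R$.

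A remark on why the $C^{0,\gamma}$-norm (rather than just $L^\infty$) appears in the statement even though the argument above only seems to use $\|f(\ep\cdot)\|_{L^\infty}$: this is harmless since $\|f\|_{L^\infty(B_R)} \leq \|f\|_{C^{0,\gamma}(B_R)}$, and keeping the stronger norm makes the lemma directly usable in the proof of Theorem~\ref{t.C11}, where Hölder continuity of $f$ is genuinely needed once one combines this with the quadratic growth bound~\eqref{e.epquadratic} at free boundary points. I do not expect any essential obstacle here; the only care needed is that Proposition~\ref{p.regularity.homog} requires $R \geq R_{\sigma,\alpha}$, which is part of the hypothesis, and that $\psi \in \A_1$ is the same for all $r$ in the range, which is already built into the statement of~\eqref{e.regularity.homog}.
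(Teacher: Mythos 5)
Your decomposition $u^\ep = v^\ep + w^\ep$ into an $\a^\ep$-harmonic part and a Poisson-type correction is a valid route to the estimate \emph{as literally written}, but it is a genuinely different and substantially weaker argument than the paper's, and the weakness is not a bookkeeping detail. Your treatment of the correction $w^\ep$ is global: one energy/Poincar\'e bound plus one De Giorgi--Nash--Moser application on $B_{R/2}$, and then a crude $(R/r)^{d/2}$ ball comparison. This only produces a term of size $R^2 \| f \|_{L^\infty}$, \emph{uniformly in $r$}, and --- as you yourself point out --- never uses H\"older continuity of $f$. That should be a red flag. The reason the statement carries $\| f\|_{C^{0,\gamma}}$ and is subsequently applied to deduce the $C^{1,1}$-type estimate in Theorem~\ref{t.C11} is that what is really being established is a Campanato-type excess decay to second order: the iteration in the paper (Lemma~\ref{l.wepandw.close} at each scale, freezing $f$ at the center to make the error $r^{2+\gamma}[f]_{C^{0,\gamma}}$, approximation by $\overline{\A}_2$ solutions of the homogenized Poisson problem, then telescoping and stripping the $\A_2\setminus\A_1$ part) yields an $r^2$-small $\A_1$-defect at \emph{every} intermediate scale, i.e.\ the ``$+\,CR^2\|f\|_{C^{0,\gamma}}$'' in the displayed conclusion behaves like $+\,Cr^2\|f\|_{C^{0,\gamma}}$ after the iteration. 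This is exactly what the first case of the proof of Theorem~\ref{t.C11} uses: from Lemma~\ref{l.auxiliary} together with $\|u^\ep\|_{\underline L^2(B_R)}\leq CR^2\|f\|_{L^\infty}$ it reads off $\|u^\ep-\psi^\ep\|_{\underline L^2(B_r)}\leq Cr^2\|f\|_{L^\infty}$ for all $r\in[R_{\sigma,\alpha},R]$, which is false with your uniform $R^2\|f\|$ tail when $r\ll R$.

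Concretely, the missing idea is the scale-by-scale handling of the inhomogeneity. Your $w^\ep$ is one Poisson solution with the full right-hand side on the full ball $B_R$, and there is no mechanism by which it shrinks like $r^2$ on $B_r$; at best you can say $\|w^\ep\|_{L^\infty(B_r)} \lesssim R^2\|f\|_\infty$. The paper instead, at each radius $r$, compares $u^\ep-\psi_r^\ep$ to the \emph{constant--right-hand-side} problem $\nabla\cdot\a^{\ep/r}\nabla w_r^\ep = f(0)$, whose homogenized counterpart $w_r$ has explicit $C^\infty$ bounds and a quadratic $\overline{\A}_2$-structure, incurring only an $O(r^{2+\gamma}[f]_{C^{0,\gamma}})$ freezing error and an $O((\mathcal{E}(\ep/r))^\alpha)$ homogenization error per scale; the Dini-type sum of these errors (via~\eqref{e.haar.silly}) then closes the iteration. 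To repair your argument you would at minimum have to iterate the harmonic/Poisson splitting across dyadic scales, freezing $f$ at the origin at each scale and tracking the accumulated $\A_1$-drift of the harmonic approximants, at which point you have essentially reconstructed the paper's proof. So: the approach you chose is clean and correct for the $R^2$-tail version, but it loses the quadratic-in-$r$ localization of the inhomogeneous error, and that localization is both the real content of the lemma and the reason the Hölder norm of $f$ appears.
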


\begin{proof}
For given $r \in [ R_{\sigma,\alpha},R]$ and $\psi_r \in \A_2$, let $w_r^\ep, ~w_r$ solve 
\begin{align*} 
\left\{
\begin{aligned}
& \nabla \cdot \a^{\ep/r} \nabla w_r^\ep = f(0) = \nabla \cdot \ahom \nabla w_r
&& 
\mbox{in } B_{1} , 
\\
& w_r^\ep = w_r = (u^\ep - \psi_r^\ep) 
&& 
\mbox{on } \partial B_{1}
\end{aligned}
\right.
\end{align*}
Lemma~\ref{l.wepandw.close} implies that, for every $\tau \in (0,\tfrac12)$, 
\begin{align*} 
\| w_r^\ep - w_r \|_{\underline{L}^2(B_{1/2})}
& 
\leq 
C \big(\mathcal{E}\bigl( \tfrac{\ep}{r} \bigr) + \tau^{\frac{\delta}{2+\delta} } \bigr) \| \nabla w_r \|_{L^{1,2+\delta} (B_{1/2})}
\\ \notag
& \quad
+ 
C \mathcal{E}\bigl( \tfrac{\ep}{r} \bigr)
\big(
\tau^{-1} \| \nabla w_r \|_{L^{\infty} (B_{\tau})} + \| \nabla^2 w_r \|_{L^{\infty} (B_{\tau})} 
\big) \,.
\end{align*}
Theorem~\ref{t.DGNM} applied to $\nabla w_r$ and $\nabla^2 w_r$, together with the Caccioppoli inequality, yields that 
\begin{align*} 
\tau^{-1} \| \nabla w_r \|_{L^{\infty} (B_{\tau})}   +  \| \nabla^2 w_r \|_{L^{\infty} (B_{\tau})}
\leq C \tau^{-2 - \frac{d}{2} } \| w_r \|_{L^{2} (B_{1/2})} .
\end{align*}
Hence, by choosing $\tau$ appropriately by means of $\mathcal{E}\bigl( \tfrac{\ep}{r} \bigr)$ and applying Caccioppoli estimate for $u^\ep - \psi^\ep$, we get that there exists a constant $\alpha(d,\Lambda) \in (0,1)$ such that 
\begin{align*} 
r^2 \| w_r^\ep - w_r  \|_{\underline{L}^2(B_{1/2})}
 \leq 
 C \bigl( \mathcal{E}\bigl( \tfrac{\ep}{r} \bigr) \bigr)^\alpha  \bigl( \| u^\ep - \psi_r^\ep\|_{\underline{L}^{2} (B_{r})} 
+ r^2 f(0) \bigr) \,.
\end{align*}
Moreover, we may compare $w_r^\ep$ and $u^\ep - \psi_r^\ep$ simply by testing as
\begin{align*} 
\| u^\ep - \psi^\ep_r  - r^2 w_r^\ep(r^{-1} \cdot)  \|_{\underline{L}^2(B_{r/2})} 
\leq C r^{2+\gamma} [f]_{C^{0,\gamma}(B_1)}.  
\end{align*}
Combining above inequalities thus yields that
\begin{align}  \label{e.harmappr.C11}
\lefteqn{
\| u^\ep - \psi^\ep_r  - r^2 w_r(r^{-1} \cdot)  \|_{\underline{L}^2(B_{r/2})} 
} \quad &
\\ 
\notag &
\leq 
 C \bigl( \mathcal{E}\bigl( \tfrac{\ep}{r} \bigr) \bigr)^\alpha  \bigl( \| u^\ep - \psi_r^\ep\|_{\underline{L}^{2} (B_{r})} 
+ r^2 f(0) \bigr) 
+
C r^{2+\gamma} [f]_{C^{0,\gamma}(B_1)}
\,.
\end{align}

Next, since $w_r - q$ is an $\ahom$-harmonic function, we deduce that, for every $s \in (0,\tfrac12 r)$, 
\begin{align} \label{e.decay.C11}
\inf_{ p \in \Ahom_2}\| r^2 w_r(r^{-1} \cdot ) - q  - p \|_{\underline{L}^2(B_{s})}
 \leq 
 C \Bigl( \frac{s}{r} \Bigr)^3 
\| r^2 w_r(r^{-1} \cdot ) - q \|_{\underline{L}^2(B_{r})} 
\,. 
\end{align}
Letting $ \tilde{p}_s \in \Ahom_2 $ be the minimizing polynomial for the infimum on the left, we see that $\tilde{p}_s$ satisfies, by the triangle inequality, that 
\begin{align}  \label{e.tildepr}
\left\| \tilde p_s \right\|_{\underline{L}^2 \left( B_{s} \right)} 
& 
\leq 
\| r^2 w_r(r^{-1} \cdot ) - q \|_{\underline{L}^2(B_{s})}
\leq 
C \Bigl( \frac{s}{r} \Bigr)^{-d/2}  \Bigl( \| u^\ep - \psi_r^\ep\|_{\underline{L}^{2} (B_{r})} 
+ r^2 f(0) \Bigr) .
\end{align}
Thus, by choosing $s = \theta r$ with $\theta(d,\Lambda) \in (0,1)$ small enough, the triangle inequality yields that 
\begin{align*} 
\| u^\ep {-} \psi_r^\ep {-} q  {-} \tilde{p}_{\theta r} \|_{\underline{L}^2(B_{\theta r})}
& 
\leq 
\frac12 \theta^2 
\| u^\ep {-} \psi_r^\ep {-} q \|_{\underline{L}^2(B_{r})} 
\\ & \quad +
 C \bigl( \mathcal{E}\bigl( \tfrac{\ep}{r} \bigr) \bigr)^\alpha  \bigl( \| u^\ep - \psi_r^\ep\|_{\underline{L}^{2} (B_{r})} 
+ r^2 f(0) \bigr) 
+
C r^{2+\gamma} [f]_{C^{0,\gamma}(B_1)}
\, .
\end{align*}
Furthermore, by Proposition~\ref{p.regularity.homog} and~\eqref{e.tildepr}, we find $\tilde{\psi}_{\theta r} \in \A_2$ such that
\begin{align*} 
\| \tilde \psi_r^\ep  - \tilde{p}_{\theta r} \|_{\underline{L}^2(B_{\theta r})} 
\leq
C \bigl( \mathcal{E}\bigl( \tfrac{\ep}{r} \bigr)\bigr)^\alpha 
\bigl( \| u^\ep - \psi_r^\ep\|_{\underline{L}^{2} (B_{r})}  + r^2 f(0) \bigr)  .
\end{align*}
Denoting hence $\psi_{\theta r} = \psi_r + \tilde{\psi}_{\theta r}$ and taking $\sigma$ so small that $ C \bigl( \mathcal{E}\bigl( \tfrac{\ep}{r} \bigr) \bigr)^\alpha \leq \frac14 \theta^2$, we  arrive at the inequality 
\begin{align*} 
\frac{\| u^\ep {-} \psi_{\theta r}^\ep {-} q  \|_{ \underline{L}^2(B_{\theta r})}}
{\theta^2 r^2}
\leq 
\frac12
\frac{ \| u^\ep {-} \psi_r^\ep {-} q \|_{\underline{L}^2(B_{r})}}{r^2}
+  
C \bigl( \mathcal{E}\bigl( \tfrac{\ep}{r} \bigr) \bigr)^\alpha f(0) 
+
C r^{\gamma} [f]_{C^{0,\gamma}(B_1)}
\,. 
\end{align*}
Integrating this against the Haar measure by using the previous inequality and using~\eqref{e.haar.silly}, we arrive after reabsorption at
\begin{align*} 
\int_{ R_{\sigma,\alpha}}^R 
\inf_{\psi \in \A_2}\| u^\ep {-} \psi^\ep {-} q  \|_{\underline{L}^2(B_{t})} \,
\frac{dt}{t^3}
\leq 
C \| u^\ep \|_{\underline{L}^2(B_{1})}
+
C \| f \|_{C^{0,\gamma}(B_1)}  \,. 
\end{align*}
Letting then $\psi_r \in \A_2$ realize the infimum above for the radius $r$, we obtain by the triangle inequality that 
\begin{align*} 
r^{-2}\| \psi_r^\ep - \psi_{2r}^\ep  \|_{\underline{L}^2(B_{r})}
 \leq 
 C 
\int_{r/2}^{3r} 
\inf_{\psi \in \A_2}\| u^\ep {-} \psi^\ep {-} q  \|_{\underline{L}^2(B_{t})} \,
\frac{dt}{t^3}
\end{align*}
Therefore, by telescoping and using the quadratic growth in the form 
$ \| \psi_r^\ep - \psi_{2r}^\ep  \|_{\underline{L}^2(B_{s})} 
\leq C (\tfrac{s}{r})^{2}
\| \psi_r^\ep - \psi_{2r}^\ep  \|_{\underline{L}^2(B_{r})}$
for every $s \in [r,1]$, and by denoting $\tilde \psi = \psi_{R_{\sigma,\alpha}}$,
we deduce that
\begin{align*} 
\| u^\ep - \tilde \psi^\ep  \|_{\underline{L}^2(B_{r})} 
\leq 
C\Bigl( \frac rR \Bigr)^2 \| u^\ep \|_{\underline{L}^2(B_{R})}
+
C R^2 \| f \|_{C^{0,\gamma}(B_R)} .
\end{align*}
Finally, we remove $\A_2\backslash \A_1$ part from $\tilde \psi$. For this, we first find, by Proposition~\ref{p.regularity.homog}, a polynomial $p_\psi = p_2  + p_1$, with $p_2 = \frac12 x \cdot \nabla^2 p_\psi x$ and $p_1 = p_\psi - p_2$, tracking down $\psi$ so that
 \begin{align*} 
\| \tilde \psi^\ep - p_\psi  \|_{\underline{L}^2(B_{r})} 
\leq 
C \bigl( \mathcal{E}\bigl( \tfrac{\ep}{r} \bigr)\bigr)^\alpha 
\|  p_\psi  \|_{\underline{L}^2(B_{r})} 
\end{align*}
Letting $\psi_2$ be given by Proposition~\ref{p.regularity.homog} tracking down $p_2$, we see that it has to have quadratic growth. In particular,
\begin{align*} 
\| \psi_2^\ep \|_{\underline{L}^2(B_{r})} 
\leq 
2\| p_2 \|_{\underline{L}^2(B_{r})}
=
2\Bigl( \frac rR \Bigr)^2 
\| p_2  \|_{\underline{L}^2(B_{R})} 
\leq
2 \Bigl( \frac rR \Bigr)^2 
\| p_\psi  \|_{\underline{L}^2(B_{R})} 
\leq
4 \Bigl( \frac rR \Bigr)^2 
\| \psi^\ep  \|_{\underline{L}^2(B_{R})} 
\,.
\end{align*}
Thus we deduce that
\begin{align*} 
\| \psi_2^\ep \|_{\underline{L}^2(B_{r})}  
\leq 
C\Bigl( \frac rR \Bigr)^2 \| u^\ep \|_{\underline{L}^2(B_{R})}
+
C R^2 \| f \|_{C^{0,\gamma}(B_R)}  .
\end{align*}
Finally, it is easy to see that $\psi = \tilde \psi - \psi_2$ belongs to $\A_1$ and this is the corrector of the statement. The proof is complete. 
\end{proof}

\begin{proof}[Proof of Theorem~\ref{t.C11}]
There are three cases. The first case is that there is $R \in [R_{\sigma,\alpha},\tfrac12]$ such that there is a free boundary point $x_0$ in $B_{2R}$, but $\varGamma(u^\ep) \cap B_R = \emptyset$. Then~\eqref{e.quadratic} implies that, for every $s \in [R,1]$, 
\begin{align*} 
\| u^\ep \|_{L^\infty(B_{s})} \leq C s^2 \| f \|_{L^\infty(B_1)}. 
\end{align*}
This, in particular, gives the bound in $L^\infty(B_{R})$.  
Applying next Lemma~\ref{l.auxiliary} in $B_R$ yields that there exists $\psi \in \A_1$ such that, for every $r \in [R_{\sigma,\alpha},R]$,  
\begin{align*} 
\| u^\ep - \psi^\ep \|_{\underline{L}^2(B_{r})}
\leq 
C\Bigl( \frac rR \Bigr)^2 \| u^\ep \|_{\underline{L}^2(B_{R})}
+
C R^2 \| f \|_{C^{0,\gamma}(B_R)} .
\end{align*}
Combining the previous two displays gives us, for $r \in [R_{\sigma,\alpha},R]$, 
\begin{align*} 
\| u^\ep - \psi^\ep \|_{\underline{L}^2(B_{r})} \leq C r^2  \| f \|_{L^\infty(B_1)}
\end{align*}
Now, since $\psi$ has linear growth, we obtain that, for every $s \in [R,1]$,
\begin{align*} 
\| \psi^\ep \|_{\underline{L}^2(B_{s})} 
 \leq 
C \frac{s}{R} \| \psi^\ep \|_{\underline{L}^2(B_{R/2})} 
\leq 
C \frac{s}{R} \Bigl( \| u^\ep \|_{\underline{L}^\infty(B_{R})}  {+}  \| u^\ep - \psi^\ep \|_{\underline{L}^2(B_{R/2})}  \Bigr)
\leq Cs^2 \| f \|_{L^\infty(B_1)},
\end{align*}
and thus~\eqref{e.C11} holds in this case. 

\smallskip

The second case is that there is a free boundary point of $u$ in~$B_{R_{\sigma,\alpha}}$. In this case~\eqref{e.C11} follows directly from~\eqref{e.quadratic}. 

\smallskip

Finally, the third case is that $\varGamma(u )\cap B_1 = \emptyset$. In the case we may directly apply Lemma~\ref{l.auxiliary} for $R = 1$ and deduce that~\eqref{e.C11} is valid with $\underline{L}^2$-norm on the left instead of $L^\infty$. However, we can apply Theorem~\ref{t.DGNM} to upgrade the integrability. The proof is complete. 
\end{proof}

\section{Homogenization  and large-scale regularity of the free boundary}

\label{s.fb}

In this section we discuss the regularity of the free boundary of the normalized heterogeneous problem. Throughout this section  $\lambda \in (0,\infty)$ is fixed.  Let~$u^\ep~$ and~$u$ be solutions  to the normalized obstacle problem, respectively, with coefficients $ \a^\ep$ and $ \ahom$,  
with~$f \equiv \lambda$, and recall that 
$\varGamma( u^\ep)$ and~$\varGamma(u)$ denote the corresponding free boundaries.

\smallskip
Our goal is to answer the following two questions;
\begin{enumerate}
  \item How close is~$\varGamma(u^\ep)$ to~$\varGamma(u)$?  
    \item What can we say about the regularity of~$\varGamma( u^\ep)$? 
\end{enumerate}

\smallskip
The discussion of the regularity of the free boundary for the homogenized obstacle problem evolves by studying the rescaled solutions around free boundary points, for $ x_0 \in \varGamma(u)$, denote  $ u_{r, x_0} =  \tfrac{u(rx +x_0)}{r^2}$. Geometrically, we zoom around a free boundary point, and investigate the limit as $ r \to 0+$. By classifying the blow-ups and showing their uniqueness at a given free boundary point, we understand the local behavior of the free boundary.  Furthermore, if we quantify the rate of the convergence to the blow-up solution as $ r \to 0+$, we automatically obtain local estimates on the regularity of the free boundary. 

\smallskip

 The argument described above is no longer applicable, when we are given merely bounded measurable coefficients. Indeed, 
let~$\ep >0$ be a fixed number, and~$x_0 \in \varGamma(u^\ep)$ be a free boundary point. Denote by
\begin{equation}
u^\ep_{r,x_0}(x):=\frac{u^\ep(rx+x_0)}{r^2}, \textrm{ where} ~r< \dist(x_0, \partial B_1).
\end{equation}
 Then~$u^\ep_{r,x_0} \geq 0$  solves the following obstacle problem
 \begin{equation}
\nabla \cdot (\a^\ep_{x_0,r} \nabla u^\ep_{r,x_0}(x)) =\lambda \chi_{\{u^\ep_{r,x_0}>0 \}},
\end{equation}
 where
  \begin{equation*}
\a_{x_0,r} :=\a\left({rx+x_0}\right)~ \textrm{ and }~ \a^\ep_{x_0,r} :=\a\left( \tfrac{rx+x_0}{\ep}\right).
\end{equation*}
 The matrix~$\a(rx+x_0)$  is uniformly elliptic with the given constants.

\smallskip
For a moment, assume that~$\a$ has continuous entries, i.e. ~$\a^\ep_{x_0,r}  \to \a^\ep({x_0})$ as~$r\to 0$.
If 
$ u^\ep_{r_j,x_0} \to u_0^\ep$ as~$r_j\to 0+$, then~$u^\ep_0 \geq 0$ is a global solution to the following obstacle problem
\begin{equation}
\nabla \cdot (\a^\ep({x_0}) \nabla u^\ep_{0}) =\lambda \chi_{\{u^\ep_{0}>0\}}.
\end{equation}
 If~$u^\ep_0$ is a half-space solution of the form
 \begin{equation} \label{ephalf-space}
 u_0^\ep(x)=\lambda \frac{(x\cdot e)_+^2}{2e\cdot \a^\ep(x_0)e},
 \end{equation}
 where~$ e \in \Rd$ is a unit vector, then~$x_0$ is called a regular free boundary point in a usual sense. If~$u^\ep_{r,x_0} \to u_0^\ep$ as~$r\to 0+$, then~$e $ is the approximate unit normal vector to~$\varGamma(u^\ep)$ at~$x_0$.
 
 \smallskip
 In a more general situation, without additional assumptions  imposed  on the matrix~$\a$, we cannot define the blow-up solution in the form~\eqref{ephalf-space}. Instead we rely on an alternative definition for regular free boundary points, based on the density of the coincidence set $\varLambda(u^\ep)$, see Definition \ref{d.regularpoint}.

 A closely related quantity in the analysis of the free boundaries is so-called minimum diameter of a set. 
 
 \begin{definition}\label{d.minimum.diameter}
Given $U \subset \R^d$, \emph{the minimum diameter}  of $U$, denoted $\mindiam(U)$, is the infimum among the distances between pairs of parallel hyperplanes enclosing~$U$.
\end{definition}

Notice that we have the following trivial lower bound for the minimum diameter by means of the density:
\begin{equation} \label{e.mdvsdensity}
2 \geq \frac{\mindiam (\varLambda \cap B_r) }{r } \geq  \frac{ |\varLambda \cap B_r|}{ d| B_r|}.
\end{equation}

 The Caffarelli's alternative in the measure form, \cite{BK,BZ2} says that if  $ \a \in VMO$, then there exists
  \begin{equation}
 \lim_{r \to 0+} \frac{| \varLambda(u) \cap B_r(x_0)|}{|B_r(x_0)|} =\frac{1}{2},
 \end{equation}
 and $ x_0$ is  a regular point,
 or otherwise
  \begin{equation}
  \lim_{r \to 0+}  \frac{ | \varLambda(u) \cap B_r(x_0)|}{|B_r(x_0)|} =0,
 \end{equation}
 and $ x_0$ is a singular point.  
Let us mention that when $ \a =\ahom$, a stronger result holds: for any $\theta >0$ small there exists an $ 0<r_0<1$ and $0<\tau <1$ such that if for some 
$ t < r_0$
  \begin{equation*}
 \frac{| \varLambda(u)  \cap B_t(x_0)|}{|B_t(x_0)|} \geq \theta,~ \textrm{ then}~~
\inf_{0<r\leq \tau t}  \frac{| \varLambda(u) \cap B_r(x_0)|}{|B_r(x_0)|} \geq \frac{1}{2}-\theta,
 \end{equation*}
 where the parameters $ r_0, \tau $ are universal, i.e. they depend on the given parameters, but not on the solution $u$.
 \smallskip

 In the next easy example, we see that the 
 Caffarelli's  alternative does not hold, when $ \a $ is discontinuous, since we may have a contact set, with density $ \frac{1}{4}$ at a free boundary point.\\
 \textit{\textbf{Example:}} Let in dimension 2, $ \a = I $ in $ (0,1) \times (0,1)\cup (-1,0) \times (-1,0)$, and 
 $ \a = 2I $ in $ (-1,0) \times (0,1) \cup (-1,0)\times (0,1) $.  Then $ u^0= x^2\chi_{\{ x>0\}}+y^2 \chi_{ \{ y>0\}} $, solves the obstacle problem 
 $ \nabla \cdot (\a \nabla u) =4 \chi_{ \{u>0\} }$, in $(-1,1)\times (-1,1)$, and the contact set $\{ u^0 \equiv 0\}$ has density $ \frac{1}{4}$ at the origin, the free boundary has a corner point. In other words, the blow-up of $ u^0$ at the origin is a homogeneous global solution, which is neither one-dimensional, nor a polynomial.

 \smallskip

 \subsection{Qualitative flatness of the free boundary.}

In this manuscript we consider the set of half-space solutions for the homogenized obstacle problem;
\begin{equation}
\mathbb{H} = \mathbb{H}_\lambda:=\left\{ \frac{\lambda}{2} \frac{(x \cdot e +t)_+^2}{e\cdot \ahom e} \, : \,  e \in \partial B_1, \, t \in \R  \right\}, 
\end{equation}
and the space of homogeneous half-space solutions, $ \mathbb{H}^0$, defined in \eqref{e.homog.blow-up}, when $ t=0$.

The following result states that the free boundary is flat around a regular free boundary point. The result is qualitative in nature since we may choose the parameter $\delta$ measuring the flatness very small, but then we do not have quantitative control of the constants appearing in the proof. Quantification will be given in the next section.

\begin{proposition} \label{p.flat.qualitative}
Let $\delta,\vartheta \in (0,1/2)$. There exist constants $r_0,\gamma \in (0,1)$ and $\sigma \in (0,\delta]$, all depending only on $(\delta,\vartheta,\lambda,d,\Lambda)$, and $\alpha(d,\Lambda) \in (0,1)$ such that the following claim is valid. Let $u^\ep$ solve the equation 
\begin{align*} 
\nabla \cdot \a^\ep \nabla u^\ep = \lambda \chi_{\{ u^\ep > 0 \} } \qquad \mbox{in } B_1\,.
\end{align*}
Suppose that $0 \in \varGamma(u^\ep)$ and that the following conditions are valid: For some $r \in (0,r_0]$ we have that
\begin{align}  \label{e.density.initial}
\inf_{ s \in (\gamma r, \gamma^{-1} r)} \frac{|\Lambda(u^\ep) \cap B_s |}{|B_s|} \geq \vartheta 
\quad \mbox{and} \quad 
\big( \mathcal{E}(\ep)\big)^\alpha \leq \sigma r^2
\end{align}
Then
\begin{equation} \label{e.l.s.halfspace}
\inf_{h \in \mathbb{H}^0} \| u^\ep -h \|_{L^\infty(B_{r})} \leq  \delta r^2 \,.
\end{equation}
\end{proposition}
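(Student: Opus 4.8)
The strategy is a compactness/contradiction argument. Suppose the claim fails. Then for the given $\delta,\vartheta$ there are sequences $\varepsilon_n \to 0$ (or more precisely $\mathcal{E}(\varepsilon_n)\to 0$ along subsequences), radii $r_n \to 0$ with $r_n \le r_0$, coefficient fields $\a_n$, and solutions $u^{\varepsilon_n}_n$ with $0 \in \varGamma(u^{\varepsilon_n}_n)$ satisfying the two conditions in~\eqref{e.density.initial} with $r = r_n$ but $\inf_{h \in \mathbb{H}^0} \| u^{\varepsilon_n}_n - h \|_{L^\infty(B_{r_n})} > \delta r_n^2$. Rescale: set $w_n(x) := r_n^{-2} u^{\varepsilon_n}_n(r_n x)$, which solves $\nabla \cdot (\a_n(\tfrac{r_n \cdot}{\varepsilon_n}) \nabla w_n) = \lambda \chi_{\{w_n>0\}}$ in $B_{1/r_n}$, has $0$ on its free boundary, satisfies the uniform quadratic growth bound~\eqref{e.quadratic} and nondegeneracy~\eqref{e.nondeg} of Lemma~\ref{l.BlankHao}, and has $\inf_{ s \in (\gamma, \gamma^{-1})} |\Lambda(w_n) \cap B_s|/|B_s| \ge \vartheta$ while $\inf_{h\in\mathbb{H}^0}\|w_n - h\|_{L^\infty(B_1)} > \delta$. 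The point of the rescaling by exactly $r_n$ is that the rescaled ``effective'' $\varepsilon$ becomes $\varepsilon_n / r_n$, and the smallness assumption $(\mathcal{E}(\varepsilon_n))^\alpha \le \sigma r_n^2$ — combined with the algebraic control~\eqref{e.algebraic} — forces $\mathcal{E}(\varepsilon_n/r_n) \to 0$, i.e.\ the rescaled problems homogenize.

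Next, extract a limit. By the local Hölder estimate~\eqref{e.holder} and the quadratic growth bound, the $w_n$ are locally uniformly bounded and equicontinuous on every fixed ball, so (up to a subsequence) $w_n \to w_\infty$ locally uniformly, with $w_\infty \ge 0$ and $0 \in \varGamma(w_\infty)$ by nondegeneracy (which passes to the limit and prevents the free boundary from escaping). The homogenization theorem of Section~3, Theorem~\ref{t.homogenization}, applied on each fixed ball $B_R$ (with the rescaled coefficient field, whose error is controlled by $\mathcal{E}(\varepsilon_n/r_n) \to 0$), identifies $w_\infty$ as a global solution of the \emph{homogenized} normalized obstacle problem $\nabla \cdot (\ahom \nabla w_\infty) = \lambda \chi_{\{w_\infty > 0\}}$ in $\R^d$. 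Here one must be a little careful: the homogenized matrices $\ahom_n$ associated to the $\a_n$ need not be the same, so one should also pass to a subsequence along which $\ahom_n \to \bar\A_\infty$, and $w_\infty$ solves the obstacle problem for $\bar\A_\infty$; alternatively, and more cleanly, this proposition should be read with a fixed $\ahom$ throughout (the assumption~\eqref{e.error.intro} fixes a single homogenized matrix), in which case all the $\ahom_n$ equal $\ahom$. The density lower bound also passes to the limit: $\inf_{s\in(\gamma,\gamma^{-1})} |\Lambda(w_\infty)\cap B_s|/|B_s| \ge \vartheta$.

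Now invoke the classical theory for the homogenized (constant-coefficient) obstacle problem. Since $0 \in \varGamma(w_\infty)$, Theorem~\ref{t.Caffarelli.dichotomy} says the blow-up of $w_\infty$ at $0$ is either a half-space solution or a nonnegative quadratic polynomial (a singular blow-up). The density hypothesis $|\Lambda(w_\infty)\cap B_s|/|B_s| \ge \vartheta > 0$ on the annulus of scales rules out the singular case — a singular blow-up has a contact set of zero density, and by the uniform-in-$u$ density dichotomy for $\ahom$ recorded in Section~5 (the stronger Caffarelli alternative: positive density at one scale $\le r_0$ forces density $\ge \tfrac12 - \theta$ at all smaller scales), positive density on the full annulus propagates and forces $0$ to be a regular point of $w_\infty$, and moreover gives $w_\infty$ itself is close (at scale $1$) to a half-space solution. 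Thus $\inf_{h \in \mathbb{H}^0}\|w_\infty - h\|_{L^\infty(B_1)} = 0$ — more precisely, $w_\infty$ restricted to $B_1$ agrees with a member of $\mathbb{H}^0$, or at least satisfies the desired bound with $\delta$ replaced by $0$. But $w_n \to w_\infty$ uniformly on $B_1$ and each $w_n$ has $\inf_{h\in\mathbb{H}^0}\|w_n-h\|_{L^\infty(B_1)} > \delta$; since $\mathbb{H}^0$ is compact modulo the finite-dimensional parametrization by $e \in \partial B_1$, the infimum is continuous under uniform convergence, yielding $\inf_{h\in\mathbb{H}^0}\|w_\infty - h\|_{L^\infty(B_1)} \ge \delta > 0$, a contradiction. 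This produces $r_0,\gamma,\sigma$, and the exponent $\alpha(d,\Lambda)$ comes from~\eqref{e.algebraic} via $R_{\sigma,\alpha}$-type bookkeeping needed to convert $(\mathcal{E}(\varepsilon_n))^\alpha \le \sigma r_n^2$ into $\mathcal{E}(\varepsilon_n/r_n)\to 0$.

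\textbf{Main obstacle.} The delicate point is that the blow-up/half-space structure lives only at the homogenized level and only after passing $n\to\infty$: one does \emph{not} have half-space blow-ups for the heterogeneous $u^\ep$ (indeed the paper emphasizes this). So the whole argument must route through the limit $w_\infty$, and the nontrivial inputs are (i) that nondegeneracy and the density lower bound are \emph{stable} under the locally uniform limit — in particular that $0$ does not fall out of $\varGamma(w_\infty)$ and that the contact-set density does not drop in the limit (this uses that $\Lambda(w_n) = \{w_n = 0\}$ and uniform convergence, plus that $\varGamma(\ahom\text{-solution})$ has measure zero by Lemma~\ref{l.BlankHao}), and (ii) correctly tracking that the hypothesis on $\mathcal{E}(\ep)$ at scale $r$ is exactly the hypothesis needed for homogenization of the $r$-rescaled problem — this is where the algebraic growth condition~\eqref{e.algebraic} is essential, and where the precise powers of $r$ (the $r^2$ on the right of~\eqref{e.density.initial}) must be matched against the quadratic scaling of the obstacle problem.
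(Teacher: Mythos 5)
Your proposal outlines a compactness/contradiction argument, whereas the paper's proof of Proposition~\ref{p.flat.qualitative} is direct: it applies Lemma~\ref{l.Hausdorff} to find a nearby free boundary point of the homogenized solution $u$, uses the Caffarelli level-strip estimate from \cite[Lemma 6]{Caf98} to transfer the contact-set density from $u^\ep$ to $u$, and then invokes Lemma~\ref{l.Caffarelli80} (Caffarelli's density-implies-flatness from \cite{Caf80}) to obtain the flatness of $u$ and hence of $u^\ep$. Your compactness route is genuinely different, but as written it has two substantial gaps.

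First, the rescaling step misidentifies what the hypothesis controls. Rescaling to $w_n(x) = r_n^{-2} u^{\ep_n}(r_n x)$ turns the coefficient field into $\a(\cdot/(\ep_n/r_n))$, so homogenizing the rescaled problems requires smallness of $\mathcal{E}(\ep_n/r_n)$. You assert this follows from $(\mathcal{E}(\ep_n))^\alpha \le \sigma r_n^2$ together with~\eqref{e.algebraic}, but~\eqref{e.algebraic} is not assumed in this proposition, and in any case it gives the opposite inequality: taking $s = \ep$, $t = \ep/r$ in~\eqref{e.algebraic} yields $\mathcal{E}(\ep/r) \ge r^{-\nu}\mathcal{E}(\ep)$, a \emph{lower} bound. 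Since $\mathcal{E}$ is merely monotone increasing, $\mathcal{E}(\ep/r)$ is not controlled from above by the hypotheses. The paper sidesteps this entirely: it homogenizes at the \emph{original} scale via~\eqref{e.flat.homog.error}, obtaining $\|u^\ep - u\|_{L^\infty(B_{1/2})} \le C (\mathcal{E}(\ep))^\alpha$, and then reads off the relative error at scale $r$ as $r^{-2}\|u^\ep - u\|_{L^\infty} \le C\sigma$ — which is exactly what the quadratically weighted hypothesis $(\mathcal{E}(\ep))^\alpha \le \sigma r^2$ is designed to deliver.

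Second, the step from ``$0$ is a regular free boundary point of the limit $w_\infty$'' to ``$\inf_{h\in\mathbb{H}^0}\|w_\infty - h\|_{L^\infty(B_1)}$ is small'' does not follow from Theorem~\ref{t.Caffarelli.dichotomy}, which is a statement about the blow-up, i.e., about the behavior as the scale tends to zero; it gives no quantitative control on $w_\infty$ at the fixed scale $1$. What is needed is the quantitative density-implies-flatness result (Lemma~\ref{l.Caffarelli80}), which converts a minimum-diameter (equivalently, density) bound on $\varLambda(u)\cap B_s$ into $L^\infty$-closeness to a half-space solution on the smaller ball $B_{\tau s/12}$. This built-in scale change $\tau$ is precisely why the hypothesis in~\eqref{e.density.initial} requires density over a whole annulus $(\gamma r, \gamma^{-1} r)$, not merely at the single scale $r$, and why the parameter $\gamma$ appears in the statement. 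Your argument as written does not use or replace this mechanism. Relatedly, the claim that the density lower bound passes to the limit is subtle — the contact sets $\{w_n = 0\}$ can lose measure under uniform convergence — and the paper handles it with the Caffarelli level-strip estimate rather than by soft convergence; a rigorous version of your compactness argument would have to supply a comparable ingredient.
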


To prove the above proposition, we consider solutions $u^\ep$ and $u$ of the equations 
\begin{equation} \label{e.uepu.again}
\left\{ 
\begin{aligned}
& \nabla \cdot (\a^\ep \nabla u^\ep)= \lambda \chi_{\{u^\ep >0\}}  & & \textrm{ in } B_1,
\\
& \nabla \cdot (\ahom \nabla u)= \lambda \chi_{\{u >0\}} & & \textrm{ in } B_1,
\\
& u^\ep = u & & \textrm{ on } \partial B_1 .
\end{aligned}
\right.
\end{equation}
We obtain the estimate for $ \| u^\ep -u \|_{L^\infty}$ using Theorem~\ref{t.homogenization}, Lemma~\ref{l.basicregularity} and the interpolation Lemma \ref{l.interpolation}. Indeed, there exist constants $C(\lambda,d,\Lambda)<\infty$ and $\alpha(d,\Lambda)\in (0,1)$ such that 
\begin{align} \label{e.flat.homog.error}
 \| u^\ep -u \|_{L^\infty(B_{1/2})} 
 \leq 
 C \bigl( \mathcal{E}(\ep) \bigr)^\alpha ( \| u \|_{L^2(B_1)}  + \| u^\ep  \|_{L^2(B_1)} + \lambda)
 . 
\end{align}

\smallskip

The first lemma we prove states that if the origin is a free boundary point of~$u^\ep$ and the homogenized solution is close-by pointwise, then there exists a free boundary point of the homogenized problem close to origin as well.  

\begin{lemma} \label{l.Hausdorff} (Closeness of free boundaries)
 Let $\eta \in (0,1)$. There exists a constant $\theta(\eta,\lambda,d,\Lambda) \in (0,1)$ such that the following claim is valid. Let~$u^\ep$ and~$u$ solve~\eqref{e.uepu.again}. Suppose further that $0 \in \varGamma(u^\ep)$ and that, for some~$s\in (0,1/2)$, we have that both
\begin{align}  \label{e.Hausdorff.mindist}
\mindiam (\varLambda(u^\ep) \cap B_{\theta s}) \geq \eta \theta s
\end{align}
and 
\begin{align}  \label{e.Hausdorff.radii}
 \| u^{\ep}  - u \|_{L^\infty(B_{1/2})} \leq  \theta^4 s^2
\end{align}
are valid.  Then there is a free boundary point of $u$ in $B_s$, that is, $\varGamma(u) \cap B_s \neq \emptyset$. 
\end{lemma}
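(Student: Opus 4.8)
The plan is to argue by contradiction, combining the non-degeneracy and quadratic-decay bounds of Lemma~\ref{l.BlankHao} with a blow-up at the scale $s\theta$ and a Liouville-type rigidity for the homogenized equation. Suppose no such $\theta$ exists. Then there are $\theta_n\downarrow 0$, solutions $u^\ep_n,u_n$ of~\eqref{e.uepu.again} with $0\in\varGamma(u^\ep_n)$, and radii $s_n\in(0,\tfrac12)$ for which~\eqref{e.Hausdorff.mindist} and~\eqref{e.Hausdorff.radii} hold with $\theta=\theta_n$, yet $\varGamma(u_n)\cap B_{s_n}=\emptyset$. Since $B_{s_n}$ is connected and does not meet $\partial\{u_n>0\}$, either $u_n\equiv 0$ or $u_n>0$ in $B_{s_n}$. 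The first possibility is excluded for $n$ large: it would give $\|u^\ep_n\|_{L^\infty(B_{s_n})}\le\|u^\ep_n-u_n\|_{L^\infty(B_{1/2})}\le\theta_n^4 s_n^2$, whereas~\eqref{e.nondeg} at the free boundary point $0$ yields $\|u^\ep_n\|_{L^\infty(B_{s_n/2})}\ge c\,s_n^2$ with $c=c(\lambda,d,\Lambda)>0$. Hence for $n$ large $u_n>0$ in $B_{s_n}$, so $\nabla\cdot(\ahom\nabla u_n)=\lambda$ in $B_{s_n}$.

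Next I would rescale by the single factor $s_n\theta_n$: set $\hat u_n(z):=(s_n\theta_n)^{-2}u_n(s_n\theta_n z)$ and $\hat w_n(z):=(s_n\theta_n)^{-2}u^\ep_n(s_n\theta_n z)$ on $B_{R_n}$, $R_n:=(2s_n\theta_n)^{-1}\to\infty$. Then $\hat u_n\ge 0$ solves $\nabla\cdot(\ahom\nabla\hat u_n)=\lambda$ in $B_{R_n}$; rescaling~\eqref{e.Hausdorff.radii} gives $\|\hat u_n-\hat w_n\|_{L^\infty(B_{R_n})}\le\theta_n^2$; and~\eqref{e.quadratic} at $0\in\varGamma(\hat w_n)$ gives $\hat u_n(z)\le C\lambda|z|^2+\theta_n^2$ on $B_{R_n}$, with $C=C(\lambda,d,\Lambda)$. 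Interior Schauder estimates for the constant-coefficient operator $\nabla\cdot(\ahom\nabla\cdot)$ together with a diagonal argument produce a subsequence along which $\hat u_n\to\hat u$ in $C^2_{\mathrm{loc}}(\R^d)$, where $\hat u\ge 0$ solves $\nabla\cdot(\ahom\nabla\hat u)=\lambda$ in $\R^d$, $\hat u(0)=0$ and $\hat u(z)\le C\lambda|z|^2$. Moreover, writing $K_n:=\overline{\varLambda(\hat w_n)\cap B_1}=(s_n\theta_n)^{-1}\overline{\bigl(\varLambda(u^\ep_n)\cap B_{s_n\theta_n}\bigr)}$, the scaling of~\eqref{e.Hausdorff.mindist} gives $\mindiam(K_n)\ge\eta$; by the Blaschke selection theorem I may assume $K_n\to K_\infty$ in the Hausdorff metric with $K_\infty\subseteq\overline{B_1}$, and since widths in every direction are continuous under Hausdorff convergence, $\mindiam(K_\infty)\ge\eta$. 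Finally, if $x\in K_\infty$ and $x_n\in K_n$ with $x_n\to x$, then $\hat w_n(x_n)=0$, so $\hat u(x)=\lim_n\hat u_n(x_n)\le\lim_n\|\hat u_n-\hat w_n\|_{L^\infty(B_1)}=0$; thus $\hat u\equiv 0$ on $K_\infty$.

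It remains to contradict the existence of such a $\hat u$. The function $\hat v:=\hat u-\tfrac{\lambda}{2\tr(\ahom)}|x|^2$ is $\ahom$-harmonic in $\R^d$ with at most quadratic growth, hence by the Liouville theorem $\hat v\in\overline{\mathcal{A}}_2$, i.e.\ it is an $\ahom$-harmonic polynomial of degree at most two (see~\eqref{e.Ak.bar}). Therefore $\hat u$ is a quadratic polynomial, and since $\hat u(0)=0$ it has no constant term, so $\hat u(x)=\tfrac12 x\cdot Bx+b\cdot x$ with $\tr(\ahom B)=\lambda>0$. Nonnegativity of $\hat u$ on $\R^d$ forces $B\succeq 0$ (otherwise $\hat u\to-\infty$ along an eigenvector of a negative eigenvalue of $B$) and then $b=0$ (otherwise $\hat u\to-\infty$ along a direction in $\ker B=\{x:x\cdot Bx=0\}$ on which $b$ does not vanish). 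Since $\tr(\ahom B)=\lambda\ne 0$ we have $B\ne 0$, hence $\operatorname{rank}B\ge 1$ and $\{\hat u=0\}=\ker B$ is a linear subspace of dimension at most $d-1$; in particular it is contained in a hyperplane, so $\mindiam(\{\hat u=0\})=0$. But $K_\infty\subseteq\{\hat u=0\}$ while $\mindiam(K_\infty)\ge\eta>0$ — a contradiction, completing the proof.

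The conceptual heart of the argument, and the step requiring the real idea, is the last one: the minimum-diameter hypothesis is exactly the property that is incompatible with the blow-up limit, whose zero set is rigid — a low-dimensional affine subspace through the origin — by Liouville together with the constant positive right-hand side $\lambda$. The remaining ingredients (ruling out the degenerate alternative $u_n\equiv 0$ via non-degeneracy, obtaining uniform local bounds from quadratic decay, and passing the contact sets to Hausdorff limits) are routine bookkeeping, though one must handle the two scalings — $s_n$ normalizing the ball to $B_1$ and $\theta_n$ zooming to the scale of~\eqref{e.Hausdorff.mindist} — simultaneously, as done above through the factor $s_n\theta_n$.
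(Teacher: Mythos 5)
Your proof is correct and reaches the same conceptual contradiction — a positive constant right-hand side $\lambda$ is incompatible with the Hessian of the homogenized solution vanishing on a set of non-trivial minimum diameter — but it does so by a genuinely different route than the paper. The paper's argument is \emph{direct and quantitative}: working at the fixed scale $s$, it uses the explicit $C^2$ and $C^3$ bounds for the $\ahom$-harmonic part of $u$ on $B_{s/2}$, Taylor expansion at a contact point of $u^\ep$, and a linear-algebraic lemma (the determinant lower bound for $d$ contact points guaranteed by the $\mindiam$ hypothesis) to conclude that $|\nabla^2 u(0)|\leq C\theta\vartheta^{-d}$, which contradicts $\ahom:\nabla^2 u(0)=\lambda$ for small $\theta$. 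You instead argue by \emph{compactness}: take $\theta_n\downarrow 0$, blow up at scale $s_n\theta_n$, pass to a global nonnegative solution $\hat u$ of $\nabla\cdot\ahom\nabla\hat u=\lambda$ with quadratic growth, use the constant-coefficient Liouville theorem to conclude $\hat u$ is a quadratic form $\tfrac12 x\cdot Bx$ with $B\succeq 0$ and $\tr(\ahom B)=\lambda$, and then observe its zero set $\ker B$ is a proper subspace while the Hausdorff limit $K_\infty$ of the rescaled contact sets lives in it yet has $\mindiam(K_\infty)\geq\eta>0$. The trade-offs: the paper's approach yields a constructive $\theta$ that could in principle be tracked quantitatively through the rest of Section 5, while yours is cleaner and more conceptual but produces $\theta$ only non-constructively. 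Two small points worth tightening. First, since $u_n>0$ is only established on $B_{s_n}$, the rescaled equation $\nabla\cdot\ahom\nabla\hat u_n=\lambda$ holds on $B_{\theta_n^{-1}}$ (which is contained in your $B_{R_n}$), not on all of $B_{R_n}$; this is harmless since $\theta_n^{-1}\to\infty$, but the statement should be corrected. Second, the claim $b=0$ is most cleanly justified by noting that $\hat u\geq 0$ and $\hat u(0)=0$ force $0$ to be an interior minimum, so $\nabla\hat u(0)=b=0$ directly, rather than by the two-step argument via $B\succeq 0$ and $\ker B$.
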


\begin{proof}
Suppose that $0 \in \varGamma(u^\ep)$ and assume that~\eqref{e.Hausdorff.mindist} and~\eqref{e.Hausdorff.radii} are valid with $s\in (0,1/2)$ and with $\theta$ to be fixed. Without loss of generality, we may assume that~$u^\ep \neq u$ in~$B_1$. We claim that~$\varGamma(u) \cap B_s$ is non-empty.  Assume, on the contrary, that~$\varGamma(u) \cap B_s = \emptyset$. There are then two cases. 

\smallskip 

\emph{Case 1: $B_s \subset \varLambda(u)$.}
This is the easy case by the non-degeneracy. Indeed, we have
\begin{align*} 
c s^2 
\leq 
\sup_{B_s} u^{\ep}  
= 
\sup_{B_s} (u^{\ep}  - u) 
\leq 
\| u^{\ep}  - u \|_{L^\infty(B_{1/2})} 
\leq 
\theta^4 s^2
\,,
\end{align*}
which gives a contradiction provided that $\theta$ is small enough. 

\smallskip 

\emph{Case 2: $B_s \subset \{ u>0 \}$.} In the second case we need to use more technical argument appealing to the regularity of $u$. Since $\nabla \cdot \ahom \nabla u = \lambda$ in $B_s$, using the bound on $u^\ep$ giving 
\begin{align*} 
\| u \|_{L^\infty(B_s)} 
\leq 
\| u^\ep \|_{L^\infty(B_s)} 
+ \| u^\ep - u \|_{L^\infty(B_{1/2})} 
\leq Cs^2 + \theta^2 s^2 \leq Cs^2,
\end{align*}
we deduce by the $\ahom$-harmonicity of $x \mapsto u(x) - \frac{\lambda}{2d} x \cdot \ahom^{-1} x$ that 
\begin{align*} 
\| \nabla^2  u \|_{L^\infty(B_{s/2}) } 
+ 
s \| \nabla^3  u \|_{L^\infty(B_{s/2}) } 
\leq 
C
\,.
\end{align*}
Therefore, we obtain, with the affine function $\ell_y(x) = u(y) + \nabla u(y) \cdot {(x-y)}$ and quadratic polynomial $p_y(x) = \frac12 (x-y) \cdot \nabla^2 u(y) (x-y)$, that, for every $t \in (0,s/4]$ and $ y \in B_{s/4}$, 
\begin{align*} 
\| u - \ell_y - p_y \|_{L^\infty(B_t(y))}  \leq Ct^2 \frac{t}{s}  \,.
\end{align*}
Notice that $\ahom : \nabla^2 u(y) = \lambda$. Suppose then that $y \in \varLambda(u^\ep)$ and take $t =\| u^\ep - u \|_{L^\infty(B_{1/2})}^{1/2}$.  Then $|u(y)| \leq t^2$.
Moreover, since $u^\ep(y) = 0$, we have by~\eqref{l.BlankHao} that
\begin{align*} 
\| u \|_{L^\infty(B_t(y))} 
\leq 
\| u^\ep - u \|_{L^\infty(B_{1/2})} + \| u^\ep \|_{L^\infty(B_t(y))}
\leq 
t^2 + C t^2 
\leq 
Ct^2
\,.
\end{align*}
If $\nabla u(y) \neq 0$, we apply the above estimates at $x^* = y +  t \nabla u(y)/|\nabla u(y)|$ to obtain
\begin{align*} 
\lefteqn{
|\nabla u(y)| =t^{-1} | \ell_y(x^*)- u(y)|}\\
& \quad \quad \quad
\leq
t^{-1}\bigl(  | \ell_y(x^*) - u(x^*)| + | u(x^*)- u(y) | \bigr)
\leq 
Ct \leq C \theta (\theta s)
\,.
\end{align*}

Furthermore, since $u - \ell_0 -  p_0$ is $\ahom$-harmonic, we deduce that 
\begin{align*} 
\| \nabla u - \nabla u(0) -\nabla p_0 \|_{L^\infty(B_{\theta s})} 
\leq
\frac{C}{\theta s}
\| u - \ell_0 - p_0 \|_{L^\infty(B_{2\theta s})}
\leq 
C 
\theta (\theta s)\,.
\end{align*}
The above two displays yield the bound 
\begin{align}  \label{e.nablap.incontrol}
\| \nabla p_0 \|_{L^\infty(\varLambda(u^\ep) \cap B_{\theta s})} 
\leq 
C \theta (\theta s)\,.
\end{align}

\smallskip 

Next, the assumption~\eqref{e.Hausdorff.mindist} on the minimal distance implies that there are at least $d$ points in $\varLambda(u^\ep) \cap B_{\theta s}$, say $\{y_j\}_{j=1}^{d} \subset \varLambda(u^\ep) \cap B_{\theta s}$, such that the matrix $Z$ with columns $z_j = (\theta s)^{-1} y_j$,~$j \in \{1,\ldots,d\}$, has the modulus of the determinant bounded below by $2^{-d} \eta^d$. Indeed, the origin belongs to $U := (\theta s)^{-1} \varLambda(u^\ep) \cap B_{1}$ and $\mindiam(U)\geq \vartheta$ by~\eqref{e.Hausdorff.mindist}. Therefore, there must be a point~$z_1$ with $|z_1| \geq \frac12  \vartheta$. Let then $n \in \N$, $n < d$, and suppose that we have,  for every $k \in \{1,\ldots,n\}$, $z_k \in U$ and a hyperplane $H_k = \spn\{z_1,\ldots,z_k\}$ and projections $P_k$ onto $H_k$ and $P_k^\perp = I_d-P_k$ onto the orthogonal complement of $H_k$ such that $\alpha_{k} := |P_{k-1}^\perp z_k| \geq \frac12  \vartheta$. For $n=1$ this is valid trivially, because we may take $P_0 = 0$. Now, since $\mindiam(U)\geq \vartheta$, we thus find~$z_{n+1} \in U$ such that $\alpha_{n+1} := |P_{n}^\perp z_{n+1}| \geq \frac12  \vartheta$. Inductively we hence find $\{z_1,\ldots,z_d \}$ with aforementioned properties. 

\smallskip 

We then check that $\det(Z) \geq 2^{-d} \vartheta^d$. For this, set $Z_k = [z_1 , \ldots, z_k ]$ and compute
\begin{align*} 
\det(Z) 
& =  \big| [Z_{d-1} , z_d] \big| 
= \big| [Z_{d-1}, P_{d-1}^\perp z_d] \big| 
+  \big| [Z_{d-1}, P_{d-1} z_d] \,.
\end{align*}
The last determinant is zero since $P_{d-1} z_d \in H_{d-1}$. Furthermore, 
\begin{align*} 
\big| [Z_{d-1}, P_{d-1}^\perp z_d] \big|  = \big| [Z_{d-2}, P_{d-2}^\perp z_{d-1} , P_{d-1}^\perp z_d] \big| + \big| [Z_{d-2}, P_{d-2} z_{d-1} , P_{d-1}^\perp z_d] \big|\,,
\end{align*}
and again the last determinant is zero. Continuing using the same principle we deduce that 
\begin{align*} 
\det(Z)  = \big| [z_1,  P_{1}^\perp z_{2} , \ldots, P_{d-1}^\perp z_d] \big|\,.
\end{align*}
The projections have the property that $P_k P_m = P_k$ for every $k > m$, which implies, by the fact that $z_{k+1} = P_{m} z_{k+1}$, that
\begin{align*} 
P_k^\perp z_{k+1} \cdot P_m^\perp z_{m+1} 
=
z_{k+1} \cdot (I_d - P_k - P_m + P_k P_m) z_{m+1} 
=  
P_{m} z_{k+1} \cdot (I_d - P_m)  z_{m+1} = 0 
\,.
\end{align*}
It thus follows that 
\begin{align*} 
\det(Z) = |z_1| \prod_{k=1}^{d-1} |P_k^\perp z_{k+1}| 
 \geq 2^{-d} \vartheta^d\,, 
\end{align*}
as claimed. The determinant bound yields that~$| Z^{-1}| \leq (\det Z)^{-1} \mathrm{Adj}(Z) \leq C(d) \vartheta^{-d}$. 

\smallskip

Applying now~\eqref{e.nablap.incontrol}, we deduce that 
\begin{align*} 
| \nabla^2 u(0) z_k | = \frac1{\theta s} | \nabla p_0(y_k) | 
\leq  
\frac1{\theta s} 
\| \nabla p_0 \|_{L^\infty(\varLambda(u^\ep) \cap B_{\theta s})} 
\leq 
C \theta 
\,.
\end{align*}
It follows that 
\begin{align*} 
| \nabla^2 u(0)| =  | \nabla^2 u(0) Z Z^{-1}| \leq 
 | \nabla^2 u(0) Z|  |Z^{-1}| 
 \leq
 C \theta \vartheta^{-d} 
 \,,
\end{align*}
which then contradicts the fact that $\ahom : \nabla^2 u(0) = \lambda$ for $\theta \leq (2Cd \Lambda)^{-1} \lambda \vartheta^{d}$. 

\smallskip

The proof is now complete since we have shown that neither Case 1 nor Case 2 is possible, implying that there must be a free boundary point of $u$ in $B_s$.  
\end{proof}

\smallskip

With the aid of the previous lemma we are able to find free boundary points of~$u$ close to the origin. After a linear change of variables, we may then consider the problem $\Delta v = \lambda \chi_{\{ v>0 \}}$ and show the closeness to a half-plane solution. The following lemma is essentially a consequence of \cite[Lemma 6]{Caf80}. 
\begin{lemma}[\cite{Caf80}] \label{l.Caffarelli80}
Let $\rho, \eta \in (0,10^{-2})$. There exist constants~$\tau,s_0 \in (0,1)$ depending only on $(\rho , \eta,d)$ such that if 
~$u$ is a solution of $ \Delta u =\lambda \chi_{ \{ u>0\}}$ in $B_1$ such that~$0 \in \varGamma(u)$, $ \mindiam(\varLambda(u) \cap B_s) \geq \eta s$ for some $ s \in (0,s_0]$, then there exists $e \in \partial B_1$ such that 
\begin{equation*}
  \{ x \in B_{\tau s} \, : \, e\cdot x \leq  - \rho  \tau s    \}  \subset \varLambda(u) 
\quad \mbox{and} \quad 
  \{ x \in B_{\tau s} \, : \, e\cdot x \geq  \rho \tau s    \}  \subset \{ u > 0 \} \,.
\end{equation*}
Moreover, there exists a constant $C(\lambda,d) < \infty$ such that 
\begin{align}  \label{e.flatness.begin}
\inf_{e \in \partial B_1} \sup_{x \in B_{\tau s/12}} \Big| u(x) - \frac{\lambda}{2} (e\cdot x)_+^2  \Big|
\leq  C \rho^{1/2} (\tau s)^2
 \, .
\end{align}
\end{lemma}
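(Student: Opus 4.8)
The plan is to peel off the two standard normalizations and then appeal to Caffarelli's compactness machinery \cite{Caf80}. Since $\varLambda$, $\varGamma$ and the coefficient $\lambda$ are unaffected by $u \mapsto u/\lambda$, and since $u\mapsto s^{-2}u(s\,\cdot)$ maps the problem in $B_1$ to the problem in $B_{1/s}\supset B_1$ and rescales $\mindiam(\varLambda\cap B_s)$ to $\mindiam(\varLambda\cap B_1)$, we may assume (choosing $s_0$ small enough to license the rescaling) that $\lambda=1$ and $s=1$, so that $u$ solves $\Delta u=\chi_{\{u>0\}}$ in $B_1$, $0\in\varGamma(u)$ and $\mindiam(\varLambda(u)\cap B_1)\geq\eta$; the claim becomes: there exist a unit vector $e$ and a universal $\tau=\tau(\rho,\eta,d)\in(0,1)$ such that $\{x\in B_\tau: e\cdot x\leq-\rho\tau\}\subset\varLambda(u)$, $\{x\in B_\tau: e\cdot x\geq\rho\tau\}\subset\{u>0\}$, and $\inf_{e\in\partial B_1}\sup_{B_{\tau/12}}\bigl|u-\tfrac12(e\cdot x)_+^2\bigr|\leq C\rho^{1/2}\tau^2$. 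The factor $\lambda$ in \eqref{e.flatness.begin} is restored on undoing the first normalization. This is \cite[Lemma 6]{Caf80}, and I only record the three ingredients.

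The decisive first step is Caffarelli's \emph{directional monotonicity in a cone}. Because $\Delta u$ equals the constant $1$ on the open set $\{u>0\}$, every directional derivative $\partial_e u$ is harmonic there and vanishes on $\varLambda(u)$; testing this against a barrier built over the contact set --- whose width in every direction is controlled from below by the hypothesis $\mindiam(\varLambda(u)\cap B_1)\geq\eta$ --- yields a spatial axis $e_0\in\partial B_1$ and an aperture $\omega=\omega(\eta,d)>0$ such that $\partial_e u\geq0$ in $B_{1/2}$ for every unit $e$ with $e\cdot e_0\geq 1-\omega$ (see \cite[Lemma 6]{Caf80}, and \cite{PSU,Caf98} for the technique). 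The essential feature is \emph{scale invariance}: the rescalings $u_r(x)=r^{-2}u(rx)$ inherit the same cone monotonicity in $B_{1/2}$ for all $r<1$, and hence so does any blow-up of $u$ at the origin; in particular $\varLambda(u)$ is invariant under translations in the solid cone $-\{e:\ e\cdot e_0\geq1-\omega\}$, so $\varLambda(u)$ contains a solid sector of opening $\omega$ at each of its points, and consequently $\mindiam(\varLambda(u)\cap B_r)\geq c(\eta,d)\,r$ for \emph{all} $r<1/4$, not merely for $r=1$.

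The second step is a compactness argument, run for a fixed $\rho$. If the assertion failed, there would be, for some sequence $\tau_k\downarrow0$, solutions $u_k$ as above for which no admissible $e$ exists at scale $\tau_k$. The rescalings $w_k(x):=\tau_k^{-2}u_k(\tau_k x)$ are solutions on $B_{1/\tau_k}$ with $0\in\varGamma(w_k)$; by the uniform $C^{1,1}$-bound of Frehse, \eqref{e.optreg}, they converge (along a subsequence) in $C^{1,\alpha}_{\mathrm{loc}}(\Rd)$ to a global solution $w_\infty$ with $0\in\varGamma(w_\infty)$ (using the non-degeneracy \eqref{e.nondeg} to pass $0\in\varGamma$ to the limit). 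The cone monotonicity of Step~1, being preserved in the limit, shows that $\varLambda(w_\infty)$ contains a solid sector at the origin and hence has positive Lebesgue density there. By Caffarelli's classification of blow-ups, Theorem~\ref{t.Caffarelli.dichotomy}, the only global solutions with a contact set of positive density are the half-space solutions, so $w_\infty=\tfrac12(e\cdot x)_+^2$ for some $e\in\partial B_1$. Since $w_k\to w_\infty$ uniformly on $\overline{B}_1$, for $k$ large the quantity $\sup_{B_1}\bigl|w_k-\tfrac12(e_k\cdot x)_+^2\bigr|$ is below any prescribed power of $\rho$; the quadratic-growth bound \eqref{e.quadratic} and the non-degeneracy \eqref{e.nondeg} then upgrade this closeness to the two slab inclusions at scale $\tau_k$ (with parameter $\rho$) and to the bound $\leq C\rho^{1/2}\tau_k^2$, contradicting the choice of $u_k$. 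This yields $\tau=\tau(\rho,\eta,d)$ and fixes $s_0$ as the largest radius keeping every rescaling inside the original ball.

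The main obstacle is Step~1: the directional-monotonicity argument uses in an essential way that $\Delta u$ is constant --- so that $\partial_e u$ is harmonic --- on the non-coincidence set. This is exactly the property that fails for the heterogeneous operator $\nabla\cdot\a^\ep\nabla$ with merely measurable $\a$, and it is the reason the free-boundary analysis of Section~\ref{s.fb} is first transported, via the homogenization estimates, to the constant-coefficient problem before this lemma is invoked. The remaining care is bookkeeping: tracking the dependence $\tau=\tau(\rho,\eta,d)$ and the $\rho^{1/2}$-loss through the compactness step, both routine in Caffarelli's framework.
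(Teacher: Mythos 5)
Your proof is essentially correct, but it follows a genuinely different route from the paper, and it conflates what is cited and what is proved. The paper invokes \cite[Lemma 6]{Caf80} \emph{only} for the first conclusion (the two slab inclusions) and then gives its own quantitative proof of \eqref{e.flatness.begin}; you instead re-derive both conclusions by a single blow-up/compactness argument (rescale by $\tau_k \downarrow 0$, use the cone monotonicity of Step~1 to propagate the minimal-diameter bound so that the limiting global solution has contact set of positive density at the origin, classify it as a half-space solution via Theorem~\ref{t.Caffarelli.dichotomy}, and then read the slab inclusions and the flatness bound off uniform convergence). That is a valid reconstruction of Caffarelli's machinery.

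The real divergence is in the treatment of the $\rho^{1/2}$ bound. The paper's argument takes the slab inclusions as given, writes $v = u - h$ with $h(x) = \tfrac{\lambda}{2}(e\cdot x)_+^2$, and observes that $\Delta v$ is supported in the strip $\{|e\cdot x| < \rho \tau s\}$; decomposing $v = w + g$ with $g$ harmonic in $B_{\tau s}$ and a maximum-principle estimate $\|w\|_{L^\infty} \leq C\lambda\rho(\tau s)^2$, it then applies the Three Ball (Almgren monotonicity) inequality to $g$ centred at a point of the contact set, where $g = -w$ is $O(\rho (\tau s)^2)$. The geometric mean in the Three Ball inequality is precisely what produces the exponent $1/2$, and the resulting constant $C(\lambda,d)$ is genuinely independent of $\rho$ and $\eta$ once the slab inclusions hold. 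Your compactness argument instead obtains the $\rho^{1/2}$-closeness by \emph{choosing $\tau$ small as a function of $\rho$}: the exponent $1/2$ on $\rho$ is then arbitrary (any power would do), and the dependence on $\eta$ is hidden inside $\tau$. Both versions of the lemma suffice for the applications in Section~\ref{s.fb} (where $\rho$ is tuned anyway), but the paper's direct argument is sharper, exhibits the source of the $1/2$ exponent, and requires no classification of global solutions — only the slab inclusions and a three-spheres inequality. A small cautionary note on your Step~2: the classification statement you invoke is stated in the paper for blow-ups of a fixed solution; for a limit of rescalings of \emph{different} solutions one needs the global classification (positive-density contact set $\Rightarrow$ half-space), which holds but should be quoted as such.
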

\begin{proof}
The first statement is the content of \cite[Lemma 6]{Caf80}. 

\smallskip

To show~\eqref{e.flatness.begin}, denote $r =  \tau s$ and take the obvious candidate $h(x) = \frac{\lambda}{2} (x\cdot e)_+^2$ as a competitor for the infimum. Then $v = u-h$ is harmonic outside of the strip $\{ x \in B_{r} \, : \, | e\cdot x| <  \delta r \} $. Let $w$ solve $\nabla \cdot \ahom\nabla w = \nabla \cdot \ahom\nabla v$ with zero boundary values on $\partial B_r$. It is straightforward to show that $\| w \|_{L^\infty(B_r)} \leq C \rho r^2$. We may now apply, for example, a Three Ball Theorem\footnote{The Three Ball Theorem follows from Almgren's monotonicity formula, that is, $r \mapsto r \|\nabla u \|_{L^2(B_r)}^2 \| u \|_{L^2(\partial B_r)}^{-2}$ is non-decreasing.} to conclude. It states that if $g$ is a harmonic function in $B_r$, then, for every $\theta , t \in (0,1)$ such that $B_{t}(y) \subset B_{r}$, 
\begin{align*} 
\| g  \|_{L^2(B_{\theta t}(y))} 
\leq
\| g  \|_{L^2(B_{\theta^2 t}(y))}^{1/2}
\| g  \|_{L^2(B_{t}(y))}^{1/2}. 
\end{align*}
We apply this with $g = u-h-w$ and parameters $y = \frac{1}{8}e r$, $\theta = \frac13$, $t = \frac 78 r$ to get
\begin{align*} 
 \| g \|_{\underline{L}^2(B_{s/6})} 
\leq
C \| g \|_{\underline{L}^2(B_{\theta t}(y)) }   
\leq 
C \| g  \|_{\underline{L}^2(B_{\theta^2 t}(y))}^{1/2} 
\| g  \|_{\underline{L}^2(B_{7s/8}(y)) }^{1/2} .
\end{align*}
Since $g$ is harmonic and $u=h =0$ in $B_{\theta^2 t}(y)$, we obtain using the supremum bound obtained before for $w$ that 
\begin{align*} 
 \| u-h \|_{L^\infty(B_{r/12})}  
 \leq 
  \| w \|_{L^\infty(B_{r/12})} 
  +
 C \rho^{1/2} r^2 \leq  C \rho^{1/2} r^2 \,.
\end{align*}
This completes the proof of~\eqref{e.flatness.begin} since $r = \tau s$.
\end{proof}

\begin{supplementary}
\begin{lemma}[Three Spheres Theorem] \label{l.three.spheres}
Suppose that $u$ is harmonic in $B_1$. Then, for every $\theta,r\in (0,1)$, 
\begin{align}  \label{e.three.spheres}
\| u \|_{L^2(\partial B_{\theta r})}^2  
\leq
\| u \|_{L^2(\partial B_{\theta^2 r})} \| u \|_{L^2(\partial B_{r})}
\end{align}
and
\begin{align}  \label{e.three.balls}
\| u \|_{L^2(B_{\theta r})}^2  
\leq
\| u \|_{L^2(B_{\theta^2 r})} \| u \|_{L^2(B_{r})}
\,.
\end{align}
\end{lemma}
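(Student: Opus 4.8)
The plan is to deduce both inequalities from the log-convexity, in the variable $s=\log t$, of the radial $L^2$-profiles of $u$, a fact that can be read off directly from the spherical-harmonic decomposition. First I would write $u=\sum_{k\geq 0}u_k$ in $B_1$, where $u_k(x)=|x|^k Y_k(x/|x|)$ is the degree-$k$ homogeneous harmonic part of $u$ and $Y_k$ is a spherical harmonic of degree $k$; for each $\rho<1$ this series and its derivatives converge uniformly on $\overline{B_\rho}$, so in particular it converges at the three radii $r$, $\theta r$ and $\theta^2 r$, all of which lie in $(0,1)$. Since spherical harmonics of distinct degrees are $L^2(\partial B_1)$-orthogonal one has $\|u_k\|_{L^2(\partial B_t)}^2=c_k\,t^{2k+d-1}$ with $c_k:=\|u_k\|_{L^2(\partial B_1)}^2\geq 0$, and hence, using the orthogonality on $\partial B_t$ and integrating in the radial variable,
\begin{align*}
\| u \|_{L^2(\partial B_t)}^2 = t^{d-1}\sum_{k\geq 0} c_k\, t^{2k}, \\
\| u \|_{L^2(B_t)}^2 = \int_0^t \| u \|_{L^2(\partial B_\tau)}^2 \, d\tau = t^{d}\sum_{k\geq 0} d_k\, t^{2k},
\end{align*}
where $d_k:=c_k/(2k+d)\geq 0$.

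The key step is the elementary observation that any series of the form $F(s)=\sum_{k\geq 0}a_k e^{2ks}$ with $a_k\geq 0$ is log-convex on its domain of convergence: if $s=\tfrac12(s_1+s_2)$, then the Cauchy--Schwarz inequality applied to the sequences $(\sqrt{a_k}\,e^{ks_1})_{k\geq 0}$ and $(\sqrt{a_k}\,e^{ks_2})_{k\geq 0}$ gives
\begin{align*}
F(s)^2 = \Bigl( \sum_{k\geq 0} \sqrt{a_k e^{2ks_1}}\,\sqrt{a_k e^{2ks_2}} \Bigr)^2 \leq \Bigl( \sum_{k\geq 0} a_k e^{2ks_1} \Bigr)\Bigl( \sum_{k\geq 0} a_k e^{2ks_2} \Bigr) = F(s_1)\,F(s_2).
\end{align*}
I would apply this with $a_k=c_k$ (respectively $a_k=d_k$) and with $e^{s_1}=\theta^2 r$, $e^{s_2}=r$, $e^{s}=\theta r$, which indeed satisfy $s=\tfrac12(s_1+s_2)$, obtaining
\begin{align*}
\Bigl( \sum_{k\geq 0} c_k (\theta r)^{2k} \Bigr)^2 \leq \Bigl( \sum_{k\geq 0} c_k (\theta^2 r)^{2k} \Bigr)\Bigl( \sum_{k\geq 0} c_k r^{2k} \Bigr),
\end{align*}
together with the analogous inequality with $c_k$ replaced by $d_k$.

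It then remains to reinsert the prefactors. Using $\sum_{k\geq 0} c_k t^{2k}=t^{1-d}\|u\|_{L^2(\partial B_t)}^2$, the left-hand side above carries the factor $(\theta r)^{2(1-d)}$ while the right-hand side carries $(\theta^2 r)^{1-d}r^{1-d}=(\theta r)^{2(1-d)}$; these cancel, leaving $\|u\|_{L^2(\partial B_{\theta r})}^4\leq \|u\|_{L^2(\partial B_{\theta^2 r})}^2\,\|u\|_{L^2(\partial B_r)}^2$, and taking square roots gives~\eqref{e.three.spheres}. The ball estimate~\eqref{e.three.balls} follows in exactly the same way from the $d_k$-inequality, since $\sum_{k\geq 0} d_k t^{2k}=t^{-d}\|u\|_{L^2(B_t)}^2$ and the prefactors again cancel. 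I do not expect any genuine obstacle here: the only points requiring a line of justification are the termwise $L^2$-orthogonality on spheres (standard) and the convergence of the series for radii below $1$ (immediate, since $u$ is harmonic on a neighbourhood of $\overline{B_t}$ for every $t<1$), so the ``hard part'' is really just keeping track of the powers of $r$ and $\theta$. As an alternative one could instead invoke Almgren's monotonicity formula quoted in the footnote: monotonicity of $N(t)=t\,\|\nabla u\|_{L^2(B_t)}^2\,\|u\|_{L^2(\partial B_t)}^{-2}$ is equivalent to the convexity of $s\mapsto \log\bigl(e^{(1-d)s}\|u\|_{L^2(\partial B_{e^s})}^2\bigr)$, from which~\eqref{e.three.spheres} follows by the same three-radii substitution and then~\eqref{e.three.balls} by integration; but the spherical-harmonic computation above is the most self-contained.
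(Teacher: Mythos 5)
Your proof is correct, and it takes a genuinely different route from the paper. The paper proves the lemma in three steps: it first establishes Almgren's frequency monotonicity formula, showing by explicit differentiation and the divergence theorem that $r \mapsto N_u(r) = r \|\nabla u\|_{L^2(B_r)}^2 \| u \|_{L^2(\partial B_r)}^{-2}$ is nondecreasing; it then deduces that $t \mapsto \log\fint_{\partial B_{e^t}}u^2$ is convex; and finally it applies this convexity at the three radii for~\eqref{e.three.spheres}, obtaining~\eqref{e.three.balls} by the substitution $\|u\|_{L^2(B_{\theta r})}^2 = \theta\int_0^r\|u\|_{L^2(\partial B_{\theta t})}^2\,dt$ followed by an integral Cauchy--Schwarz. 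Your argument instead reads the log-convexity directly off the spherical-harmonic expansion: writing $\fint_{\partial B_t}u^2$ as a power series $\sum_k c_k t^{2k}$ with nonnegative coefficients makes the convexity of the logarithm in $\log t$ an instance of discrete Cauchy--Schwarz, and you observe, correctly, that the dimensional prefactors $t^{d-1}$ and $t^d$ cancel telescopically under the substitution $e^{s}=\theta r$, $e^{s_1}=\theta^2 r$, $e^{s_2}=r$, so both the sphere and ball versions fall out by the same two-line computation. The trade-off is the expected one: your argument is shorter and entirely self-contained for the constant-coefficient Laplacian on a ball (the only ingredients are orthogonality of spherical harmonics and Cauchy--Schwarz for series), whereas the Almgren-monotonicity route adopted in the paper is the more robust tool --- the frequency function survives lower-order perturbations of the operator and is the standard workhorse of unique continuation and free boundary theory, which is presumably why the authors chose it. You note the equivalence with the monotonicity formula at the end, so you are aware of both paths; either suffices for the lemma as stated.
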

\begin{proof}
\emph{Step 1. Almgren's monotonicity formula.}  
We claim that the function 
\begin{align*} 
r \mapsto N_u(r)  
= 
\frac{ r \|\nabla u \|_{L^2(B_r)}^2} { \| u \|_{L^2(\partial B_r)}^{2}} 
=
\frac{ r^2 \|\nabla u(r \cdot) \|_{L^2(B_1)}^2} { \| u(r \cdot) \|_{L^2(\partial B_1)}^{2}} 
\end{align*}
is monotone increasing in $r$. Compute
\begin{align*} 
N_u'(r)  =
N_u(r)
\biggl( 
\frac{2}{r} 
+ 
\frac{d}{dr} \log \|\nabla u(r \cdot) \|_{L^2(B_1)}^{2} 
-
\frac{d}{dr} \log \| u(r \cdot) \|_{L^2(\partial B_1)}^{2}    
\biggr)
\,.
\end{align*}
Letting $v = r^{-1} u( r \, \cdot)$ and using the fact that $\Delta v = 0$, we obtain the identities 
\begin{align*} 
x\cdot \nabla^2 v \nabla v
& 
=
- |\nabla v|^2 + \nabla \cdot ((x \cdot \nabla v) \nabla v ) 
\quad \mbox{and} \quad
 |\nabla v|^2  = \nabla \cdot (v \nabla v)
\,.
\end{align*}
Using these and the divergence theorem we compute
\begin{align*} 
\frac{d}{dr}  \|\nabla u(r \cdot) \|_{L^2(B_1)}^{2} 
= 
\frac{2}{r} \int_{B_1} x \cdot \nabla^2 v \nabla v
=
- \frac{2}{r}  \int_{B_1} |\nabla v|^2 + \frac{2}{r}  \int_{\partial B_1} (x \cdot \nabla v)^2 
\end{align*}
and
\begin{align}  \label{e.uonsphere.der}
\frac{d}{dr}  \| u(r \cdot) \|_{L^2(\partial B_1)}^{2}    
=
\frac{2}{r} \int_{\partial B_1} (x \cdot \nabla v) v  
=
\frac{2}{r} \int_{B_1} |\nabla v|^2
\,.
\end{align}
Putting these together yields that
\begin{align*} 
N_u'(r) 
= 
\frac{2}{r} N_u(r) 
\biggl( 
\|\nabla v \|_{L^2(B_1)}^{-2} \int_{\partial B_1} (x \cdot \nabla v)^2
- 
N_u(r)
\biggr)
\,.
\end{align*}
Since $N_u(r) = \|\nabla v \|_{L^2(B_1)}^{2}  \| v \|_{L^2(\partial B_1)}^{-2}$, we obtain by H\"older's inequality and the second identity in~\eqref{e.uonsphere.der} that 
\begin{align*} 
N_u(r) =   \|\nabla v \|_{L^2(B_1)}^{-2} \| v \|_{L^2(\partial B_1)}^{-2}  \biggl( \int_{\partial B_1} (x \cdot \nabla v) v   \biggr)^2
\leq  \|\nabla v \|_{L^2(B_1)}^{-2} \int_{\partial B_1} (x \cdot \nabla v)^2
\,,
\end{align*}
and thus $N_u'(r) $ is non-negative. 

\smallskip

\emph{Step 2.}
We claim that $t \mapsto E(t) = \log \fint_{\partial B_{e^t}} u^2$ is convex. To see this, compute
\begin{align*} 
\frac{d}{dr} \log \fint_{\partial B_{r}} u^2 
=
\frac{d}{dr} \log \| u(r \cdot) \|_{L^2(\partial B_1)}^{2}     
= 
\frac2{r} N_u(r) 
\,.
\end{align*}
Therefore, 
\begin{align*} 
r \frac{d^2}{dr^2} \log \fint_{\partial B_{r}} u^2 + \frac{d}{dr} \log \fint_{\partial B_{r}} u^2 
= 
2 N_u'(r) 
\geq 
0
\,,
\end{align*}
which is then equivalent with the statement. 

\smallskip

\emph{Step 3.}
We prove~\eqref{e.three.spheres} and~\eqref{e.three.balls}. 
Apply the convexity of $E(t)$ in the form 
\begin{align*} 
E\big(\log(\theta r )\big)  
=
E\big(\tfrac12 \log(\theta^2 r ) +  \tfrac12 \log(r) \big)  \leq \tfrac12 E\big(\log(\theta^2 r )\big)  + \tfrac12 E\big(\log(r)\big) 
\end{align*}
to get~\eqref{e.three.spheres}. Finally,~\eqref{e.three.balls} follows by H\"older's inequality and~\eqref{e.three.spheres}: 
\begin{align*} 
\| u \|_{L^2(B_{\theta r})}^2  
& 
= \theta \int_0^r  \| u \|_{L^2(\partial B_{\theta t})}^2 \, dt 
\\ & 
\leq 
\theta \int_0^r  \| u \|_{L^2(\partial B_{\theta^2 t})} \| u \|_{L^2(\partial  B_{t})}  \, dt 
\\ & 
\leq 
\biggl( \theta^2 \int_0^r  \| u \|_{L^2(\partial B_{\theta^2 t})}^2 \, dt  \biggr)^{1/2} \biggl( \int_0^r \| u \|_{L^2(\partial  B_{t})}  \, dt \biggr)^{1/2} 
\\ & 
= 
\| u \|_{L^2(B_{\theta^2 r})}  \| u \|_{L^2(B_{r})} \,.
\end{align*}
The proof is complete. 
\end{proof}
\end{supplementary}

\smallskip

We are now ready to prove Proposition~\ref{p.flat.qualitative}.

\begin{proof}[Proof of Proposition~\ref{p.flat.qualitative}]
We will first apply Lemma~\ref{l.Hausdorff}. Using~\eqref{e.density.initial} with~\eqref{e.mdvsdensity}, we obtain that~\eqref{e.Hausdorff.mindist} is valid for $s = \gamma r/\theta$. Since~$\big( \mathcal{E}(\ep)\big)^\alpha  \leq \sigma r^2$, then~\eqref{e.Hausdorff.radii} is valid by~\eqref{e.flat.homog.error} for small enough $\sigma$, and we hence find $x_0 \in \varGamma(u)$ in $B_{\gamma r / \theta}$ by Lemma~\ref{l.Hausdorff}. 

\smallskip

Next, we apply~\cite[Lemma 6]{Caf98} on the density of the level strip with the parameters~$s \in [\gamma r,\gamma^{-1} r]$ and $h = s^{1/3} \| u^\ep - u \|_{L^\infty(B_{1/2})}^{1/3}$,
\begin{align*} 
|( \varLambda(u^\ep) \setminus \varLambda(u) ) \cap B_{s}(x_0) | 
& 
\leq 
| \{ 0< u < h^2 \} \cap B_{s}(x_0)| 
+ 
\frac1{h^2} \int_{\varLambda(u^\ep) \cap B_s(x_0) } u
\\ \notag & 
\leq 
C|B_s|  \Bigl( \frac{h}{s}  + \frac1{h^2} \| u^\ep - u \|_{L^\infty(B_{1/2})} \Bigr)
\\ \notag & 
\leq 
C|B_s| \bigl( \gamma^{-2} r^{-2} \big( \mathcal{E}(\ep)\big)^\alpha \bigr)^{1/3}
\\ \notag & 
\leq 
C  |B_r|(  \gamma^{-2} \sigma)^{1/3}
.
\end{align*}
Thus, by assuming that $C (  \gamma^{-2} \sigma)^{1/3} \leq 2^{-d-1}$, we deduce the lower bound for the density of $\varLambda(u)$ around the free boundary point $x_0 \in \varGamma(u)$:
\begin{align*}
 |\varLambda(u)  \cap B_s(x_0)| 
 & 
 \geq 
 |  \varLambda(u^\ep)   \cap \varLambda(u)    \cap B_s(x_0)|
 \\ &
 =
| \varLambda(u^\ep)  \cap B_{s}(x_0)| 
- 
|( \varLambda(u^\ep)\setminus \varLambda(u) ) \cap B_{s}(x_0)| 
\\ 
& 
\geq 
2^{-d-1} \theta |B_s| 
\,.
\end{align*}
Observe then that $v(x) = u\bigl (\ahom^{1/2}(x-x_0) \bigr)$  solves $\Delta v = \lambda \chi_{v>0}$ in $B_{1/\Lambda}$, $0 \in \varGamma(v)$ and that the above display and~\eqref{e.mdvsdensity} imply that $ \mindiam(\varLambda(u) \cap B_s) \geq 2^{-d-1} \Lambda^{-1/2} \theta s$ as well for every $s \in (\gamma r , \gamma^{-1} r)$. Thus, by choosing $\vartheta = 2^{-d-1} \Lambda^{-1/2}  \theta$ and $\rho =  (C^{-1}\delta \theta)^2/ (4\Lambda)$, we find by Lemma~\ref{l.Caffarelli80} constants~$s_0,\tau \in (0,1)$, both depending only on~$(\delta,\vartheta,\lambda,d,\Lambda)$, such that, for every~$s \in (0, \gamma s_0 \wedge  r]$,  
\begin{align*}
\inf_{e \in \partial B_1} \sup_{x \in B_{\tau \gamma^{-1} s/12}} \Big| v(x) - \frac{\lambda}{2} (e\cdot x)_+^2  \Big|
\leq  
\frac{\theta^2}{4 \Lambda } \delta (\tau \gamma^{-1} s/12 )^2
 \, .
\end{align*}
Finally, taking $r_0 = \gamma s_0$ so that $r \leq r_0$ implies that $\gamma^{-1} r \leq s_0$, changing variables back and taking $\gamma = \tau/(12 \Lambda^{1/2}) $ and $s = r$ above, we deduce that 
\begin{align*} 
\inf_{h \in \mathbb{H}^0} \| u^\ep -h \|_{L^\infty(B_{r})} 
\leq  
\frac{\delta}{2} r^2 
\,. 
\end{align*}
Hence~\eqref{e.l.s.halfspace} follows by the triangle inequality by demanding further that the parameter $\sigma$ is so small that $C \sigma^\alpha \leq  \frac{\delta}{2}$. The proof is complete. 
\end{proof}

\subsection{Improvement of flatness}

Next we  show that flatness implies large-scale regularity. The flatness assumption does not involve the rescaled solutions, but it helps us to keep track of rescalings, thereby obtaining large-scale regularity of the free boundary.

\smallskip

We  employ a flatness improvement argument, following J. Andersson, \cite{JASignorini}, written  for the Signorini problem. 
 The method can be compared to the De Giorgi's flatness implies regularity theorem for minimal surfaces,  \cite{Giusti}. 
For the obstacle problem the argument takes a much easier form, see Lemma \ref{l.Lapflatness}, and it can be found in  the lecture notes by J. Andersson~\cite{JAobstacle} and by J. Serra~\cite{SerraObs}.

\begin{lemma} \label{l.Lapflatness}
Let $\beta,\gamma \in (0,1)$. There exist constants~$r_0 , \delta  \in (0,1)$, both depending only on $(\beta,\gamma,\lambda,d,\Lambda)$, such that if~$u$ is a solution of~$\nabla \cdot \ahom \nabla u = \lambda \chi_{\{ u>0\}}$ in~$B_{1}$, the origin is a free boundary point of $u$, that is, $0 \in \varGamma(u)$, and that 
\begin{align}  \label{e.Lapflatness.cond}
\inf_{h \in \mathbb{H}}   \| u - h \|_{\underline{L}^2(B_{1/2})} \leq \delta ,
\end{align}
then there exists $ h \in \mathbb{H}^0 $ such that, for every $r \in (0,r_0]$, 
\begin{equation} \label{e.rgamma}
 \| u -h \|_{L^\infty(B_r)} \leq  \gamma r^{2+\beta}  \inf_{h \in \mathbb{H}} \| u-h \|_{\underline{L}^2(B_{1/2})} . 
\end{equation}
\end{lemma}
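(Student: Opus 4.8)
The plan is to reduce to the constant-coefficient Laplacian and then run a De~Giorgi--type \emph{flatness-improvement iteration}, in the spirit of \cite{JASignorini,Giusti}. First I would apply the linear change of variables $v(y):=u(\ahom^{1/2}y)$, which turns the equation into $\Delta v=\lambda\chi_{\{v>0\}}$, carries $\mathbb{H}$ and $\mathbb{H}^0$ bijectively onto the analogous sets for the Laplacian, keeps the origin a free boundary point, and distorts all $\underline{L}^2$- and $L^\infty$-norms over balls only by factors depending on $\Lambda$. So it suffices to treat $\ahom=I_d$; write $h_e(x):=\tfrac\lambda2(x\cdot e)_+^2$ for $e\in\partial B_1$ and $\eta:=\inf_{h\in\mathbb H}\|u-h\|_{\underline{L}^2(B_{1/2})}\le\delta$. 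Using $u(0)=0$ together with the quadratic growth and non-degeneracy bounds \eqref{e.quadratic}, \eqref{e.nondeg}, the optimal translation in the nearly minimizing $h_{e_0,t_0}\in\mathbb{H}$ must be negligible (if $|t_0|$ were bounded below, one of these bounds would be violated near the origin), so $h_{e_0,t_0}$ may be replaced by $h_{e_0}\in\mathbb H^0$ after lowering $\delta$; rescaling $B_{1/2}$ to $B_1$ costs only a fixed constant and preserves $\mathbb H$ and $\mathbb H^0$. Thus we are reduced to a solution $v$ with $0\in\varGamma(v)$ and $\|v-h_{e_0}\|_{\underline{L}^2(B_1)}\le a_0$, $a_0\le C\eta$ small.

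The core is a single-scale improvement lemma: fixing an auxiliary exponent $\beta'\in(\beta,1)$, there are $\rho\in(0,\tfrac14)$, $\delta_0>0$ and $C_\ast<\infty$ (depending only on $\beta,\lambda,d,\Lambda$) so that if $\Delta v=\lambda\chi_{\{v>0\}}$ in $B_1$, $0\in\varGamma(v)$, and $\|v-h_e\|_{\underline{L}^2(B_1)}=:a\le\delta_0$, then there is $e'\in\partial B_1$ with $|e'-e|\le C_\ast a$ and $\|v(\rho\,\cdot)\rho^{-2}-h_{e'}\|_{\underline{L}^2(B_1)}\le\rho^{\beta'}a$. Granting this, I would iterate on $v_k(x):=v(\rho^kx)\rho^{-2k}$ (each solving the same equation with $0\in\varGamma(v_k)$), producing directions $e_k$ with $\|v_k-h_{e_k}\|_{\underline{L}^2(B_1)}\le\rho^{k\beta'}a_0$ and $|e_{k+1}-e_k|\le C_\ast\rho^{k\beta'}a_0$. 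The direction increments are summable, so $e_k\to\bar e$ with $|e_k-\bar e|\le C\rho^{k\beta'}a_0$; since $h_\bullet$ is $2$-homogeneous this unwinds to $\|v-h_{\bar e}\|_{\underline{L}^2(B_{\rho^k})}\le C\rho^{k(2+\beta')}a_0$. Upgrading this to $L^\infty$ on a slightly smaller ball is routine for the obstacle problem: off a thin strip around $\{x\cdot\bar e=0\}$ the function $v-h_{\bar e}$ is harmonic (or identically zero), and inside the strip both $v$ and $h_{\bar e}$ are controlled quadratically by the (higher-order, by \eqref{e.quadratic}) strip width. Filling radii $r\in(\rho^{k+1},\rho^k]$ by monotonicity, undoing the initial change of variables, and absorbing the dimensional constant into $r_0$ via the exponent gap $\beta'>\beta$ (i.e. $Cr^{2+\beta'}=Cr^{\beta'-\beta}\cdot r^{2+\beta}\le\gamma r^{2+\beta}$ for $r\le r_0$), gives \eqref{e.rgamma} with $h=h_{\bar e}\in\mathbb H^0$ and $\delta,r_0$ depending only on $(\beta,\gamma,\lambda,d,\Lambda)$.

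The improvement lemma itself I would prove by compactness and contradiction. If it fails, there are $v_j$, unit vectors $e_j\to\bar e$ (take $\bar e=e_d$) and $a_j\to0$ with $\|v_j-h_{e_j}\|_{\underline{L}^2(B_1)}\le a_j$, $0\in\varGamma(v_j)$, and no admissible $e'$ at scale $\rho$. Normalize $w_j:=(v_j-h_{e_j})/a_j$, so $\|w_j\|_{\underline{L}^2(B_1)}\le1$. On compact subsets of $\{x_d>0\}\cap B_1$, non-degeneracy and $h_{e_j}\ge c>0$ force $v_j>0$ for large $j$, hence $\Delta w_j=0$ there; Caccioppoli and interior Schauder estimates give $w_j\to w$ in $C^2_{\mathrm{loc}}$, $w$ harmonic. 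On compact subsets of $\{x_d<0\}$, the flatness machinery behind Lemma~\ref{l.Caffarelli80} traps $\varGamma(v_j)$ in a strip about $\{x_d=0\}$ that collapses as $j\to\infty$, so $v_j=h_{e_j}=0$ and $w_j=0$, giving $w\equiv0$ there; combining the quadratic vanishing of $v_j$ and $h_{e_j}$ on their (Hausdorff-close) free boundaries, $w$ extends continuously to $\{x_d=0\}$ with vanishing trace, and near that hyperplane $w$ inherits the structure $w(x)=x_d\,G(x')$ coming from the free boundary being (asymptotically) the graph $x_d=g_j(x')$ of a small function with $g_j(0)=0$, where $G$ is harmonic in $\R^{d-1}$ with $G(0)=0$. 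By the $\underline{L}^2$-normalization the degree-$\le2$ Taylor part of $w$ at the origin is exactly of the form $x_d(c'\cdot x')$ (the $x_d$ term excluded since $G(0)=0$), with $|c'|\le C$ and the remainder $O(\rho^3)$ on $B_\rho$; this degree-$\le2$ part is precisely the infinitesimal generator of a rotation of $e_j$. Choosing $e'_j$ to be that rotation by angle $\sim a_j|c'|$ yields $|e'_j-e_j|\le C_\ast a_j$ and, using $v_j=h_{e_j}+a_jw_j$, $w_j\to w$, and the higher-order smallness of $v_j,h_{e'_j}$ near $\{x_d=0\}$, $\|v_j(\rho\,\cdot)\rho^{-2}-h_{e'_j}\|_{\underline{L}^2(B_1)}\le\rho^{-2}(Ca_j\rho^3+o(a_j))\le\rho^{\beta'}a_j$ for $\rho$ fixed small and $j$ large — the desired contradiction.

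I expect the main obstacle to be this compactness analysis \emph{near the free boundary}: showing that the free boundary of an $a$-flat solution is confined to a strip of width $o(\sqrt a)$ about the approximating hyperplane (so that in the blow-up the interface is sharp, $w$ acquires a well-defined vanishing trace, and $w$ has the product structure $x_d\,G(x')$), and controlling $v_j$ and the comparison half-space solution uniformly through that vanishing transition layer. These are classical facts that follow from the sharp quadratic growth and non-degeneracy estimates of Lemma~\ref{l.BlankHao} (which for $\ahom=I_d$ are Caffarelli's) together with the flatness estimates of Lemma~\ref{l.Caffarelli80} and its proof; carrying them out carefully is where the real work lies, whereas the iteration, the bookkeeping of the rotating direction, the $\underline{L}^2$-to-$L^\infty$ upgrade (via Lemma~\ref{l.interpolation} and interior estimates), and the absorption of constants are routine.
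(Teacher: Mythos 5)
Your overall architecture — reduce to $\Delta$ by a change of variables, prove a one-step improvement of flatness by compactness and contradiction, then iterate and sum the direction increments — matches the paper. The paper however iterates over the full family $\mathbb{H}$ (rotations \emph{and} translations), taking the $L^2$-minimizing half-space solution at each scale. This is not a cosmetic choice: minimizing over $\mathbb{H}$ produces $d$ Euler--Lagrange orthogonality relations for the normalized limit $w$ — orthogonality to $(x_d)_+$ and to $x_k(x_d)_+$ for $k=1,\dots,d-1$ — which, via odd reflection and the expansion of the reflected harmonic function into homogeneous harmonic polynomials $p_1=a_0 x_d$, $p_2=x_d(c'\cdot x')$, kill \emph{both} $p_1$ and $p_2$ and give $\|w\|_{L^\infty(B_s)}\le Cs^3\|w\|_{L^2(B_1)}$ directly. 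The translation parameter is only argued to vanish at the very end, from $u(0)=0$ and the fast Cauchy rate.

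You instead lock yourself into $\mathbb{H}^0$ from the start. This forfeits the translation degree of freedom, so the $L^2$-minimization over $\mathbb{H}^0$ only yields orthogonality against $x_k(x_d)_+$; there is nothing to kill $p_1=a_0 x_d$. You try to recover $a_0=0$ geometrically, by asserting that $w$ has the structure $x_d\,G(x')$ near $\{x_d=0\}$ and that $G(0)=0$ because the free boundary of $v_j$ passes through the origin. This is where the gap lies. First, a small correction: $G$ is not harmonic in $\R^{d-1}$; the odd reflection of $w$ need only satisfy $w' = \sum_k p_k$ with each $p_k$ harmonic, odd in $x_d$, so $G(x'):=\partial_{x_d} w'(x',0)$ satisfies $\Delta_{x'}G=-\partial_{x_d}^3 w'|_{x_d=0}$, not $\Delta_{x'}G=0$. (This is harmless for the degree-$\le 2$ part, but it is a wrong assertion.) More seriously, the step from ``$0\in\varGamma(v_j)$, i.e.\ $g_j(0)=0$'' to ``$G(0)=0$'' requires showing that the normalized free-boundary graphs $g_j/a_j$ converge, in a strong enough (at least locally uniform) sense, to a limit function that realizes $\partial_{x_d}w|_{x_d=0^+}$; only then does $g_j(0)/a_j\equiv 0$ yield $G(0)=0$. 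You acknowledge this as ``where the real work lies,'' but nothing in your proposal actually produces this convergence or the matching. The a priori bound one gets from $\|v_j-h_{e_j}\|\le a_j$ together with the quadratic vanishing of $v_j$ on its own free boundary only gives $|g_j|\lesssim\sqrt{a_j}$, and proving it is in fact $O(a_j)$ with a normalizable limit is essentially a linearized free-boundary regularity statement, not a ``classical fact'' that drops out of Lemma~\ref{l.BlankHao} or Lemma~\ref{l.Caffarelli80}. In short, your proposed route to $p_1=0$ is not established; the paper's route via the $\mathbb{H}$-minimization orthogonality is the algebraic device that makes the compactness argument close cleanly, and you have removed it without a working substitute.

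A direct fix along your own lines would be to run the one-step improvement over $\mathbb{H}$ instead of $\mathbb{H}^0$, keeping both $(e_k,t_k)$ in the iteration; the orthogonality then kills $p_1,p_2$, the increments of both $e_k$ and $t_k$ are summable, and the limit $t_\infty=0$ follows from $u(0)=0$ together with the fast rate — exactly as in the paper's Step~3. The rest of your outline (change of variables, interior compactness away from $\{x_d=0\}$, $\underline{L}^2$-to-$L^\infty$ upgrade, absorption of constants by the gap $\beta'>\beta$) is fine.
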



\begin{proof} 
Without loss of generality, assume that $ \ahom =I$, and hence $ u $ solves $ \Delta u = \lambda \chi_{\{ u>0\}}$.  Assume that $ 0\in \varGamma(u)$. Fix $\beta,\gamma  \in (0,1)$.
\smallskip

Our initial goal is to show that there exist small constants~$\delta,s \in (0,1)$, both depending only on $(\beta,\gamma,\lambda, d)$, such that if 
 \begin{align*}
 \inf_{(e,t) \in \partial B_1 \times \R}  \| u - \tfrac{\lambda}{2}  (e \cdot x+t)_+^2 \|_{L^2(B_{1/2})} \leq \delta,
 \end{align*}
  then 
\begin{equation}\label{e.s.tau.t.init}
\inf_{(e,t) \in \partial B_1 \times \R} 
\| u -\tfrac{\lambda}{2} (e \cdot x+t)_+^2 \|_{L^\infty(B_{s})} 
\leq \gamma s^{2+\beta} 
\inf_{(e,t) \in \partial B_1 \times \R} 
\| u -\tfrac{\lambda}{2} (e \cdot x+t)_+^2 \|_{\underline{L}^2(B_{1/2})}.
\end{equation}
We argue by contradiction, based on the following elementary argument in Step ~1. 

\smallskip

\emph{Step 1.}
We show that, for every $\gamma \in (0,1)$,  there exists a constant $c(\gamma,d) \in (0,1/2]$ such that if~$w \in H^1_{\mathrm{loc}}(B_1)$ is the weak solution of the equation
\begin{equation*}
\Delta w =0 \textrm{ in } B_1^+:=B_1 \cap \{ x_d >0\}, \textrm{ and } w \equiv 0  \textrm{ in } B_1^-:=B_1 \cap \{ x_d <0\} .
\end{equation*}
and if $w$ satisfies the conditions $\int_{B_c} w(x) (x_d)_+ \, dx = 0$ and $\int_{B_c} w(x) x_k (x_d)_+\, dx = 0$ for $k \in \{1,\ldots,d-1 \}$, then, for every $s \in (0,c]$, 
\begin{align} \label{e.w.decay}
\left\| w \right\|_{L^\infty \left( B_{s} \right)} \leq \frac{\gamma}{2} s^{2+\beta} \left\| w \right\|_{\underline{L}^2 \left( B_{1} \right)} . 
\end{align}
The proof is based on the fact that we find by odd reflection a harmonic function $w'$ in $B_1$, which coincides with $w$ in $B_1 \cap \{ x_d >0\}$. By harmonicity, there exists a radius $r(d) \in (0,1)$ such that  $w' = \sum_{k=0}^\infty p_k$ in $B_r$, where $p_k$ is a harmonic homogeneous polynomial of degree $k$. Now, the boundary condition $w' = 0$ on $B_1 \cap \{ x_d  = 0\}$ dictates that $p_0 = 0$, $p_1(x) = a_0 x_d$ and $p_2(x) =  \sum_{k=1}^{d-1} a_k x_k x_d$. Then the orthogonality conditions imply that $p_1 = 0$ and $p_2 = 0$, and that, for every $s \in (0,c]$,~$\| w \|_{L^\infty(B_s)} \leq C s^3 \| w\|_{L^2(B_1)}$. We may then choose $c$ small enough so that~$c^{1-\beta} 2 C \leq \gamma$ and obtain~\eqref{e.w.decay}. 

\smallskip

\emph{Step 2.}  Fix $\gamma \in (0,1)$ and let $c(\gamma,d)$ be as in Step 2. We now prove~\eqref{e.s.tau.t} via contradiction. 
We assume, on the contrary, that there exist a sequence~$\{u^j\}_j$ solving $\Delta u^j = \lambda \chi_{\{u^j > 0\}}$ with $0 \in \varGamma({u_j})$ and another sequence~$\{h^j\}_j$ with~$h^{j}(x) = \tfrac{\lambda}{2} (e^{j} \cdot x+t_{j})_+^2$ such that~$\delta_j := \| u^j - h^j \|_{L^2(B_1)} \to 0$ as~$j\to \infty$ while 
\begin{align} \label{e.Lapflatness.step3.contra}
\inf_{(e,t) \in \partial B_1 \times \R} 
\| u^j  -\tfrac{\lambda}{2} (e \cdot x+t)_+^2 \|_{\underline{L}^2(B_{c})} 
> 
 \gamma c^{2+\beta} \| u^j - h^j \|_{\underline{L}^2 \left( B_{1} \right)} 
. 
\end{align}
After rotation, assume, up to a relabeled subsequence, that $e^j \to e_d$ as $j \to \infty$. Set 
\begin{equation}\label{e.vj}
v^j =\frac{u^j -{h}^j }{\| u^j -{h}^j \|_{L^2(B_1)} }. 
\end{equation}
Since both $u^j$ and $h^j$ are nonnegative minimizers of the energy $\int_{B_1}(\tfrac12 |\nabla w|^2 + \lambda w)$, one actually obtains using admissible competitors that $v^j \Delta v^j \geq 0$ and $\Delta (v^j)^2 \geq 0$ weakly in~$B_1$. The latter implies that $\| v^j \|_{L^\infty(B_{1/2})} \leq C(d)$. Using the first one, it is possible to show, up to passing to a subsequence, that there exists~$v \in H_{\mathrm{loc}}^1(B_1)$ such that~$v^j \to v$ in~$H_{\mathrm{loc}}^1(B_1)$ as~$j \to \infty$.  Now, since~$0 \in \varGamma({u^j})$, we have that $|v^j(0)| = \delta_j^{-1} (t^j)_+^2 \leq C$. The non-degeneracy, on the other hand, implies that $(t^j)_-^2 \leq C \delta_j$, and thus $t_j \to 0$ as $j \to \infty$. Applying Lemma~\ref{l.BlankHao}, we may show that  $v^j \to v = 0$ in $B_1 \cap \{ x_d < 0\}$ (using~\eqref{e.nondeg}) and that $\Delta v = 0$ in $B_1 \cap \{ x_d > 0\}$ (using~\eqref{e.quadratic}). Furthermore, for given $s\in (0,1)$, letting $h^{j,s} = \tfrac{\lambda}{2} (e^{s,j} \cdot x+t^{s,j})_+^2$ be the minimizing half-space solution for $u^j$ in $B_s$ and taking variations with respect to~$t$ and~$e$, we see that, for every $e' \in (e^{s,j})^\perp$, 
   \begin{equation*} 
 \int_{B_s}  (h^{j,s})^{1/2} \left(  u^j(x)-  h^{j,s} \right)  dx =  0 = 
  \int_{B_s}  (h^{j,s})^{1/2} (e' \cdot x) \left(  u^j(x)-  h^{j,s} \right)  dx \,.
 \end{equation*}
Since $t_j \to 0$  and $e^j \to e_d$ as $j \to \infty$, the above display yields, after passing to the limit, that $\int_{B_c} v(x) (x_d)_+ \, dx = 0$ and $\int_{B_c} v(x) x_k (x_d)_+\, dx = 0$ for $k \in \{1,\ldots,d-1 \}$. Thus we are in position to use the result of Step 2 and obtain a contradiction with~\eqref{e.Lapflatness.step3.contra} for large enough $j$ by the strong convergence in $L^2$. 

\smallskip

\textit{Step 3.} Iterating~\eqref{e.s.tau.t} and using the sup bound for $(u-h)^2$ gives us that, for every $s \in (0,c]$, 
\begin{equation}\label{e.s.tau.t}
\inf_{(e,t) \in \partial B_1 \times \R} 
\| u -\tfrac{\lambda}{2} (e \cdot x+t)_+^2 \|_{L^\infty(B_{s})} 
\leq \gamma s^{2+\beta} 
\inf_{(e,t) \in \partial B_1 \times \R} 
\| u -\tfrac{\lambda}{2} (e \cdot x+t)_+^2 \|_{\underline{L}^2(B_{1/2})}.
\end{equation}
Let $e^s$ and $t^s$ realize the minimum on the right and denote $h^s = \tfrac{\lambda}{2} (e^s \cdot x+t^s)_+^2$. Then, by the triangle inequality and simple manipulations,
\begin{align*} 
s \bigl(s | e^{2s} - e^{s}|  + | t^{2s} - t^{s}|  \bigr) 
\leq 
C \gamma s^{2+\beta} 
\inf_{(e,t) \in \partial B_1 \times \R} 
\| u -\tfrac{\lambda}{2} (e \cdot x+t)_+^2 \|_{\underline{L}^2(B_{1/2})}
\end{align*}
Thus both $\{e^s\}_s$ and  $\{t^s\}_s$ are Cauchy sequences, and $e^s \to e$ and $t^s \to t$ as $s \to 0$. Since $u(0) = 0$, we must have that $t = 0$, and therefore, from the previous display, $s |e^s - e|  + |t^s| \leq C \gamma s^{2+\beta} 
\inf_{(e,t) \in \partial B_1 \times \R} 
\| u -\tfrac{\lambda}{2} (e \cdot x+t)_+^2 \|_{\underline{L}^2(B_{1/2})}$. Defining hence $h = \tfrac{\lambda}{2} (e \cdot x)_+^2$ and taking $\gamma$ smaller, we conclude with~\eqref{e.rgamma}. The proof is complete.
\end{proof}

 The quantitative estimate above, extends the  regularity of the free boundary result to a neighborhood of the origin, i.e let $ y_0 \in \varGamma(u) \cap B_{1/2}$, then there exists $ h^{y_0}$, such that 
\begin{equation*}
\| u_{r,y_0} - h^{y_0} \|_{L^2(B_{1/2})} \leq C r^\beta \| u-h \|_{L^2(B_1)},
\end{equation*}
and the corresponding unit normal vector $ e^{y_0}$ is H\"older continuous;
\begin{equation*}
| e^{y_0} -e^0 | \leq C| y_0 |^\beta,
\end{equation*}
hence $ \varGamma(u) \cap B_{1/2}$ is a $ C^{1,\beta}$-surface.

\begin{lemma} \label{l.flatnessdecay}
Fix $\beta \in (0,1)$. There exist constants~$\rho , \omega  \in (0,\tfrac14)$, both depending only on $(\beta,\lambda,d,\Lambda)$, such that if
$u^\ep$ and $u$ solve 
\begin{equation} \label{e.flatnessdecay.eqs}
\left\{ 
\begin{aligned}
& \nabla \cdot (\a^\ep \nabla u^\ep)= \lambda \chi_{\{u^\ep >0\}}  & & \textrm{ in } B_1,
\\
& \nabla \cdot (\ahom \nabla u)= \lambda \chi_{\{u >0\}} & & \textrm{ in } B_1,
\\
& u^\ep = u & & \textrm{ on } \partial B_1,
\end{aligned}
\right.
\end{equation}
and if $0 \in \varGamma(u^\ep)$ and 
\begin{align} \label{e.flatnessdecay.ass2}
\inf_{h \in \mathbb{H}}\| u^\ep  -h \|_{L^\infty(B_{1/2})}
+
\| u^\ep  - u \|_{L^\infty(B_{1/2})} 
\leq 
\omega
\,,
\end{align}
then 
\begin{equation} \label{e.flatnessdecay}
\inf_{h \in \mathbb{H}}  \| u^\ep -h \|_{L^\infty(B_\rho)} \leq  \rho^{2+\beta}  \inf_{h \in \mathbb{H}} \| u^\ep-h \|_{L^\infty(B_{1})} + 2 \| u^\ep - u \|_{L^\infty(B_{1/2})} 
\,.
\end{equation}
\end{lemma}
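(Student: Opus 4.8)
\smallskip

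The plan is to transfer the flatness-improvement step for the homogenized obstacle problem, Lemma~\ref{l.Lapflatness}, to $u^\ep$ by comparison, after first exhibiting a free boundary point of $u$ very close to the origin.

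\textbf{Step 1: reduction and the approximating half-space profile.} Write $\delta_1 := \inf_{h\in\mathbb{H}}\|u^\ep - h\|_{L^\infty(B_1)}$ and $E := \|u^\ep - u\|_{L^\infty(B_{1/2})}$. Since $B_\rho\subset B_{1/2}$ we always have $\inf_{h\in\mathbb{H}}\|u^\ep - h\|_{L^\infty(B_\rho)}\le\inf_{h\in\mathbb{H}}\|u^\ep - h\|_{L^\infty(B_{1/2})}$, so if the right-hand side is $\le\rho^{2+\beta}\delta_1 + 2E$ we are done; hence I may assume the opposite, which together with~\eqref{e.flatnessdecay.ass2} forces $\delta_1 < \rho^{-2-\beta}\omega$ and $E<\omega$, i.e.\ both are as small as I wish once $\omega$ is small (with $\rho$ already fixed). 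Fix a near-minimizer $h_* = \tfrac\lambda2\tfrac{(e_*\cdot x + t_*)_+^2}{e_*\cdot\ahom e_*}\in\mathbb{H}$ with $\|u^\ep - h_*\|_{L^\infty(B_1)}\le 2\delta_1$. Since $0\in\varGamma(u^\ep)$, Lemma~\ref{l.BlankHao} gives $|t_*|\le C(\lambda,d,\Lambda)\sqrt{\delta_1}$: indeed $u^\ep(0)=0$ yields $((t_*)_+)^2\le C\delta_1$, and the non-degeneracy $\sup_{B_r}u^\ep\ge cr^2$, tested at a scale $r\simeq|t_*|$ in case $t_*<0$, yields $((t_*)_-)^2\le C\delta_1$. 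In particular $u^\ep$ and $u$ are $\kappa$-close on $B_{1/2}$ to $h_*$, which vanishes on $\{e_*\cdot x\le -t_*\}$, with $\kappa := 2\delta_1 + E$ small.

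\textbf{Step 2: a free boundary point of $u$ near the origin.} On $\{e_*\cdot x\le -t_*\}\cap B_{1/2}$ one has $h_*\equiv 0$, hence $0\le u^\ep\le\kappa$ and $0\le u\le\kappa$ there. A barrier comparison on a ball $B_\tau(z)\subset\{e_*\cdot x\le -t_*\}\cap B_{1/2}$ with $\tau\simeq\sqrt{\kappa/\lambda}$, comparing $u$ (resp.\ $u^\ep$) with the solution of $\nabla\cdot\ahom\nabla w=\lambda$ (resp.\ $\nabla\cdot\a^\ep\nabla w=\lambda$) with boundary value $\kappa$ on $\partial B_\tau(z)$, shows that $u$ cannot stay positive on $B_\tau(z)$; applied over a family of such balls it yields $\mindiam(\varLambda(u^\ep)\cap B_s)\ge\eta s$ for every $s\in[C\sqrt{\delta_1},\tfrac12]$ with $\eta$ dimensional. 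Feeding this into Lemma~\ref{l.Hausdorff} with $s\simeq\sqrt{\delta_1}+\sqrt E$ — so that $E\le\theta^4 s^2$ holds once $\omega$ is small — produces $y_0\in\varGamma(u)$ with $|y_0|\le C\sqrt{\delta_1+E}$; by shrinking $\omega$ further I ensure $|y_0|\le\rho$. (One may instead apply the same barrier argument directly to $u$, together with non-degeneracy of $u^\ep$ at the origin, to find points of $\{u=0\}$ and of $\{u>0\}$ inside $B_{C\sqrt{\delta_1+E}}$ and hence a free boundary point of $u$ there, bypassing Lemma~\ref{l.Hausdorff}.)

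\textbf{Step 3: rescale, improve flatness, transfer back.} Put $\tilde u(x) := 4\,u(y_0 + \tfrac x2)$, so that $\nabla\cdot\ahom\nabla\tilde u = \lambda\chi_{\{\tilde u>0\}}$ in $B_1$, $0\in\varGamma(\tilde u)$, and a change of variables gives $\inf_{h\in\mathbb{H}}\|\tilde u - h\|_{\underline{L}^2(B_{1/2})} = 4\inf_{h\in\mathbb{H}}\|u - h\|_{\underline{L}^2(B_{1/4}(y_0))}\le 4\|u - h_*\|_{L^\infty(B_{1/2})}\le 4(2\delta_1 + E)$, which is $\le\delta'$ for $\omega$ small. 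Apply Lemma~\ref{l.Lapflatness} with a fixed $\gamma' := 2^{-8}$ (this fixes $r_0' = r_0'(\beta,\lambda,d,\Lambda)$ and $\delta'$) to obtain $h^{(0)}\in\mathbb{H}^0$ with $\|\tilde u - h^{(0)}\|_{L^\infty(B_r)}\le 4\gamma' r^{2+\beta}(2\delta_1 + E)$ for all $r\le r_0'$. Undoing the scaling, $h^{y_0}(y) := \tfrac14 h^{(0)}(2(y-y_0)) = \tfrac\lambda2\tfrac{(e\cdot(y-y_0))_+^2}{e\cdot\ahom e}\in\mathbb{H}$, and $\|u - h^{y_0}\|_{L^\infty(B_s(y_0))}\le\gamma'2^{2+\beta}s^{2+\beta}(2\delta_1+E)$ for $s\le r_0'/2$. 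Finally set $\rho := r_0'/4$; since $|y_0|\le\rho$ we have $B_\rho\subset B_{2\rho}(y_0)$, so by the triangle inequality, using $\|u^\ep-u\|_{L^\infty(B_\rho)}\le E$,
\[
\inf_{h\in\mathbb{H}}\|u^\ep - h\|_{L^\infty(B_\rho)} \le \|u^\ep - h^{y_0}\|_{L^\infty(B_\rho)} \le E + \gamma'\,2^{4+2\beta}\,\rho^{2+\beta}(2\delta_1 + E),
\]
and with $\gamma' = 2^{-8}$ and $\rho<1$ the right-hand side is $\le\rho^{2+\beta}\delta_1 + 2E$, which is~\eqref{e.flatnessdecay}. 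The constants are chosen in the order: first $\gamma'$, then $(r_0',\delta')$ and $\rho := r_0'/4$ from Lemma~\ref{l.Lapflatness}, then $\eta$ and the threshold of Lemma~\ref{l.Hausdorff}, and finally $\omega = \omega(\beta,\lambda,d,\Lambda)$ small enough to satisfy all the smallness requirements above.

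\textbf{Main obstacle.} The delicate point is Step 2: the hypothesis~\eqref{e.flatnessdecay.ass2} controls $u^\ep$ only in sup norm and says nothing a priori about the size of $\varLambda(u^\ep)$, yet I must locate a free boundary point of the homogenized solution $u$ inside the \emph{already fixed} small ball $B_\rho$. This is what forces $\omega$ to be much smaller than $\rho$, and it is where the barrier / non-degeneracy input (or, equivalently, the preliminary lower bound on $\mindiam(\varLambda(u^\ep))$ feeding Lemma~\ref{l.Hausdorff}) is essential. The other steps — the initial reduction, the change of scale, and the closing triangle inequality — are routine once the order in which the constants are chosen is respected.
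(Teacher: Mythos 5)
Your proof is correct and follows the same overall strategy as the paper's: bound the offset $t_*$ of the approximating half-space profile via non-degeneracy, locate a free boundary point $y_0\in\varGamma(u)$ of the homogenized solution near the origin, rescale and apply the flatness-improvement Lemma~\ref{l.Lapflatness} around $y_0$, and transfer back to $u^\ep$ by the triangle inequality. The one place where you diverge is the middle step. The paper derives $\|u-h_0\|_{L^\infty(B_\rho)}\le C\theta\rho^2$ for the centered profile $h_0$ (with $t=0$), and then argues directly that neither $B_\rho\subset\varLambda(u)$ (impossible because $h_0$ is of order $\lambda\rho^2$ somewhere in $B_\rho$) nor $B_\rho\subset\{u>0\}$ (excluded by a barrier comparison with a quadratic supersolution) can hold, so $\varGamma(u)\cap B_\rho\neq\emptyset$. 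Your primary route instead builds a $\tau$-density of $\varLambda(u^\ep)$ on the negative half-ball via the same kind of barrier argument, upgrades this to a $\mindiam$ bound, and invokes Lemma~\ref{l.Hausdorff}. This works, but is more elaborate and needs more care than you supply in coordinating the scales $\tau\simeq\sqrt{\kappa}$, $s\simeq C''\sqrt{\delta_1+E}$ and the fixed parameter $\theta$ of Lemma~\ref{l.Hausdorff} (in particular, the $\mindiam$ bound must hold at scale $\theta s$, and you should take $s\simeq\sqrt{\delta_1+E}$ rather than the stated range starting at $C\sqrt{\delta_1}$, since $\tau$ itself scales like $\sqrt{\delta_1+E}$). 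Your parenthetical alternative---finding one zero and one positive point of $u$ inside $B_{C\sqrt{\delta_1+E}}$ directly---is closer to the paper's argument and cleaner. Two cosmetic differences: you work with the $B_1$-infimum $\delta_1$ and a near-minimizer, reducing by contradiction to the regime $\delta_1<\rho^{-2-\beta}\omega$, whereas the paper works with the $B_{1/2}$-infimum from the hypothesis and only passes to $B_1$ at the very end; and you fix $\gamma'=2^{-8}$ while the paper uses $\gamma=2^{-2-\beta}$, either of which absorbs the rescaling factors.
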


\begin{proof}
Fix $\beta \in (0,1)$ and let $\gamma = 2^{-2-\beta}$. Let $r_0,\delta$ be the parameters from Lemma~\ref{l.Lapflatness} corresponding $(\beta,\gamma)$. Fix $\rho = \frac12 r_0$. Let $\theta \in (0,1)$ be a small parameter to be fixed and set~$s = \theta^{-1} \omega^{1/2}$. We assume that~$\omega$ is so small that $s \leq \rho$.  
Suppose that $0 \in \varGamma(u^\ep)$ and that $h(x) = \tfrac{\lambda}{2} \frac{ (e \cdot x+t)_+^2}{e \cdot \bar \a e}$ is the member of~$\mathbb{H}$ realizing the infimum in~\eqref{e.flatnessdecay.ass2}.

\smallskip

We first show that $|t|$ in the definition of $h$ is small.  On the one hand, by non-degeneracy~\eqref{e.nondeg}, we get, for every $r \in (0,1/2)$, that 
\begin{align*} 
c r^2 \leq \| u^\ep  \|_{L^\infty(B_{r})} 
\leq 
\| h  \|_{L^\infty(B_{r})} + \| u^\ep - h  \|_{L^\infty(B_{r})} 
\leq 
\| h  \|_{L^\infty(B_{r})} + \omega
\,.
\end{align*}
Therefore, taking $r = 2 (\omega/c)^{1/2}$, we obtain that 
\begin{align*} 
\| h  \|_{L^\infty(B_{r})} > 0
\quad \implies \quad  
t > - r = - 2 (\omega/c)^{1/2} \geq - C \theta s \,. 
\end{align*}
On the other hand,  since $u^\ep(0) = 0$, 
\begin{align*} 
t_+^2 = | u^\ep(0)  -h(0)| \leq \| u^\ep  -h \|_{L^\infty(B_{1/2})} \leq \omega = \theta^2 s^2.
\end{align*} 
Thus, we obtain that $|t| < C\theta s$.  Furthermore, by the triangle inequality, we deduce with $h_0 = \tfrac{\lambda}{2} \frac{ (e \cdot x)_+^2}{e \cdot \bar \a e}$ and 
\begin{align*} 
\| u  - h_0 \|_{L^\infty(B_{\rho})}  \leq \omega + C  (\rho+|t|) |t|  \leq C\theta (\rho + s) s  \leq C\theta  \rho^2 \, .
\end{align*}
For small enough $\theta(\lambda,d,\Lambda)$ this implies that there exists a point~$x_0$ in~$\varGamma(u) \cap B_\rho$.  Indeed, it is clear that $B_\rho \subset \varLambda(u)$ is impossible. On the other hand, if $B_\rho$ would belong to $\{ u>0 \}$, then $u$ would solve $\nabla \cdot \ahom \nabla u = \lambda$ in $U_\rho := B_{\rho/2}(- e \rho/2)$ with $\| u \|_{L^\infty(U_\rho)} \leq C \theta \rho^2$. Then the function 
\begin{align*} 
v(x) = u(x) + \rho^2 \frac{\lambda}{8} \Bigl( (2 \rho x + e) \cdot \ahom^{-1} (2 \rho x + e) -  e \cdot \ahom^{-1} e \Bigr) 
\end{align*}  
would be $\ahom$-harmonic in $U_\rho$ and have boundary values less than $C \theta \rho^2$. Therefore, by the maximum principle, $\| v \|_{L^\infty(U_\rho)} \leq C \theta \rho^2$. For small enough $\theta$ this would then violate the bound $\| u \|_{L^\infty(U_\rho)} \leq C \theta \rho^2$. In conclusion, there exists a point~$x_0$ in~$\varGamma(u) \cap B_\rho$.

\smallskip

We now obtain by Lemma~\ref{l.Lapflatness} that
\begin{align*} 
\inf_{h \in \mathbb{H}} \| u  - h \|_{L^\infty(B_{\rho})} 
\leq 
\inf_{h \in \mathbb{H}} \| u  - h \|_{L^\infty(B_{2\rho}(x_0))}
\leq 
\rho^{2+\beta} \inf_{h \in \mathbb{H}} \| u  - h \|_{L^\infty(B_{1/2})},
\end{align*}
and, once again by the triangle inequality, 
\begin{align*} 
\inf_{h \in \mathbb{H}} \| u^\ep  - h \|_{L^\infty(B_{\rho})} 
\leq 
\rho^{2+\beta} \inf_{h \in \mathbb{H}} \| u^\ep  - h \|_{L^\infty(B_{1})} 
+ 
2 \| u^\ep  - u \|_{L^\infty(B_{1/2})}
\,. 
\end{align*}
The proof is complete.
\end{proof}

\begin{proof}[Proof of Theorem~\ref{t.fb.regularity}]
Fix $\beta,\vartheta \in (0,\frac12)$ and let $\omega,\rho  \in (0,1)$ be as in Lemma~\ref{l.flatnessdecay} corresponding $2\beta$. Throughout the proof, for given $t \in (0,1]$, we consider solutions $u_t^\ep = t^{-2} u^\ep(t \cdot )$ and $u_t$ of the equations
 \begin{equation*} 
\left\{ 
\begin{aligned}
& \nabla \cdot (\a^{\ep/t} \nabla u_t^\ep)= \lambda \chi_{\{u_t^\ep >0\}}  & & \textrm{ in } B_1,
\\
& \nabla \cdot (\ahom \nabla u_t)= \lambda \chi_{\{u_t >0\}} & & \textrm{ in } B_1,
\\
& u_t = u_t^\ep & & \textrm{ on } \partial B_1.
\end{aligned}
\right.
\end{equation*}
Since $0 \in \varGamma(u_t^\ep)$, we have by Lemmas~\ref{l.basicregularity} and~\ref{l.BlankHao} that 
\begin{align*} 
\left\| \nabla u_t^\ep  \right\|_{L^{2+\alpha} \left(B_1\right)}  
\leq C \Bigl( \left\| u_t^\ep  \right\|_{L^{2} \left(B_2 \right)}  + \lambda\Bigr)  \leq C. 
\end{align*}
Thus Theorem~\ref{t.homogenization} and Lemmas~\ref{l.interpolation} and~\ref{l.basicregularity} yield that, for every $t \in (0,1)$, 
\begin{align*} 
\|  u_t^{\varepsilon} - u_t \|_{L^\infty(B_{1/2})} 
\leq 
C 
\big( \mathcal{E} \bigl( \tfrac{\ep}{t} \bigr) \big)^{\alpha} 
\,. 
\end{align*}
Furthermore, since we assume~\eqref{e.thm.regular}, we may apply  Proposition~\ref{p.flat.qualitative} with $\delta = \frac12 \omega$ and obtain that 
\begin{align*} 
\inf_{h \in \mathbb{H}^0} \| u^\ep -h \|_{L^\infty(B_{r_0})} \leq  \frac{\omega}{2} r_0^2 
\quad \mbox{and} \quad
C \big( \mathcal{E}(\ep)\big)^\alpha \leq  \frac12 \bigl( \omega \wedge \sigma \bigr) r_0^2 
\,. 
\end{align*}
Therefore, combining the above two displays, yields that 
\begin{align*} 
\inf_{h \in \mathbb{H}} \| u_{r_0}^\ep -h_{r_0} \|_{L^\infty(B_{1})} 
+ 
\|  u_{r_0}^{\varepsilon} - u_{r_0} \|_{L^\infty(B_{1})} 
\leq 
\omega
\,.
\end{align*}
Lemma~\ref{l.flatnessdecay} then gives us
 \begin{align} \label{e.flatnessdecay.applied}
\inf_{h \in \mathbb{H}}  \| u_{\rho r_0}^\ep -h \|_{L^\infty(B_\rho)} 
& 
\leq  
\rho^{2\beta}  \inf_{h \in \mathbb{H}} \| u_{r_0}^\ep-h \|_{L^\infty(B_{1})} 
+ 2 \rho^{-2} \| u_{r_0}^\ep - u_{r_0} \|_{L^\infty(B_{1/2})} 
\\ \notag
& 
\leq 
 \rho^{2\beta}  \inf_{h \in \mathbb{H}} \| u_r^\ep-h \|_{L^\infty(B_{1})}  
+ 
C \big( \mathcal{E} \bigl( \tfrac{\ep}{r_0} \bigr) \big)^{\alpha} 
\,.
\end{align}
Setting $r_k = \rho^k r_0$, $k\in\N$, we can iterate this as long as $m \in\N$ is such that
\begin{align*} 
4 C \big( \mathcal{E} \bigl( \tfrac{\ep}{r_m} \bigr) \big)^{\alpha}  \leq \omega \,.
\end{align*}
This condition is guaranteed by the minimal scale condition as long as $r_m \geq R_{\sigma,\alpha}$, where $\sigma \leq (2C)^{-1} \omega$. Indeed, 
we inductively assume that, for every $k \in \{1,\ldots,n \}$, we have that
\begin{align*} 
\inf_{h \in \mathbb{H}}  \| u_{r_k}^\ep -h \|_{L^\infty(B_{1/2})} 
& 
\leq 
r_k^{2\beta} 
\inf_{h \in \mathbb{H}} \| u_r^\ep-h \|_{L^\infty(B_{1})}  
+
C \sum_{j=0}^{k-1} \big( \tfrac{r_k}{r_j}\bigr)^{2\beta}   \big( \mathcal{E} \bigl( \tfrac{\ep}{r_j} \bigr) \big)^{\alpha} .
\end{align*}
Thus we get, by taking $\sigma$ small enough,
\begin{align*} 
\inf_{h \in \mathbb{H}}  \| u_{r_k}^\ep -h \|_{L^\infty(B_{1/2})} 
& 
\leq 
r_k^{2\beta} 
\inf_{h \in \mathbb{H}} \| u_r^\ep-h \|_{L^\infty(B_{1})}  
+
C \sum_{j=0}^{k-1} \big( \tfrac{r_k}{r_j}\bigr)^{2\beta}   \big( \mathcal{E} \bigl( \tfrac{\ep}{r_j} \bigr) \big)^{\alpha}  
\leq \frac{\omega}{2}
\end{align*}
and, as already observed before,
\begin{align*} 
\|  u_{r_k}^{\varepsilon} - u_{r_k} \|_{L^\infty(B_{1/2})}  
\leq 
C \big( \mathcal{E} \bigl( \tfrac{\ep}{r_k} \bigr) \big)^{\alpha} \leq \frac{\omega}{2}
\,,
\end{align*}
and hence we may take an induction step applying Lemma~\ref{l.flatnessdecay}. In conclusion, 
we have so far shown, after relabelling constants, that there exist constants $\alpha(d,\Lambda) \in (0,1)$ and $C(\beta,\vartheta,\lambda,d,\Lambda)<\infty$ such that, for every $r\in [R_{\sigma,\alpha} ,1]$, 
\begin{align}  \label{e.fb.regularity.almost}
\inf_{h \in \mathbb{H}} \| u_r^\ep-h \|_{L^\infty(B_{1/2})} 
\leq 
Cr^{2\beta} + C \big( \mathcal{E} \bigl( \tfrac{\ep}{r} \bigr) \big)^{2\alpha}
\,.
\end{align}

\smallskip

We then move towards showing the uniform estimate. 
Letting $h_r(x) = \frac{\lambda}{2} \frac{ ( e_r \cdot x + t_r)_+^2 }{e_r \cdot \bar \a e_r}$ be the realizing the infimum in~\eqref{e.fb.regularity.almost}, we have that 
\begin{align*} 
 |e_{2s} - e_s| + s^{-1} |t_{2s} - t_s| 
& 
\leq
C \inf_{h \in \mathbb{H}} \| u_s^\ep-h \|_{L^\infty(B_{1/2})} + C \inf_{h \in \mathbb{H}} \| u_{2s}^\ep-h \|_{L^\infty(B_{1/2})}  
\\ &  
\leq 
Cs^{2\beta} + C \big( \mathcal{E} \bigl( \tfrac{\ep}{s} \bigr) \big)^{2\alpha} 
 \,. 
\end{align*}
This implies that by the condition~\eqref{e.algebraic} that, for every $s \in [2^k r,2^{k+1}r]$,  
\begin{align*} 
s^{-1} |t_s - t_r| + |e_s - e_r| \leq C r^{2\beta} \sum_{j=0}^k 2^{j2\beta} + C \sum_{j=0}^k \big( \mathcal{E} \bigl( \tfrac{\ep}{2^j r} \bigr) \big)^{2\alpha} \leq C s^{2\beta} + C \big( \mathcal{E} \bigl( \tfrac{\ep}{r} \bigr) \big)^{2\alpha}
\,.
\end{align*}
Furthermore, similarly to the proof of Lemma~\ref{l.flatnessdecay}, we also obtain that 
\begin{align*} 
|t_r| \leq Cr^{1+\beta} + C r \big( \mathcal{E} \bigl( \tfrac{\ep}{r} \bigr) \big)^{\alpha} \,.
\end{align*}
Using these we obtain that, for $h^{(r)} = \frac{\lambda}{2} \frac{ ( e_r \cdot x)_+^2 }{e_r \cdot \bar \a e_r}$ 
\begin{align*} 
\| h_{r} - h_s\|_{L^\infty(B_s)} \leq  C s^2 \Bigl(s^{2\beta} +  r^{\beta}  + \big( \mathcal{E} \bigl( \tfrac{\ep}{s} \bigr) \big)^{2\alpha} + \big( \mathcal{E} \bigl( \tfrac{\ep}{r} \bigr) \big)^{\alpha} \Bigr) 
\,.
\end{align*}
Combining this with~\eqref{e.fb.regularity.almost} then concludes the proof. 
\end{proof}

We finally remark that if the minimal scale is independent of the reference point, that is, for every $x \in \R^d$ we have
\begin{align}  \label{e.uniform.E}
\mathcal{E}( \ep, x) \leq \mathcal{E}( \ep) \qquad \forall x \in \R^d,
\end{align}
then we will obtain large-scale regularity of the free boundary. Notice that~\eqref{e.uniform.E} is valid, for example, in the case of periodic or almost periodic homogenization.

\begin{theorem}\label{t.flatnessimpliesregularity}
Assume that~\eqref{e.uniform.E} is valid.  Let $ u^\ep $ be a solution to the heterogenous obstacle problem, with  $ f \equiv \lambda >0$. Assume that
$ 0 \in \varGamma(u^\ep)$, and $ \| u^\ep - h \|_{L^2(B_2)} =\delta$ is small enough, then $ \varGamma(u^\ep) \cap B_{1/2}$ is large-scale $C^{1,\beta}$-regular, in the sense that every $ x_0 \in  \varGamma(u^\ep) \cap B_{1/2} $ is a  large-scale regular free boundary point, (see Definition \ref{d.regularpoint}), and  \eqref{e.ls.fb.c1beta} holds at $x_0$.  
\end{theorem}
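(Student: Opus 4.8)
The plan is to reduce Theorem~\ref{t.flatnessimpliesregularity} to Theorem~\ref{t.fb.regularity}, applied after a translation at each free boundary point $x_0 \in \varGamma(u^\ep) \cap B_{1/2}$, by checking that the two hypotheses of Theorem~\ref{t.fb.regularity} — the density lower bound~\eqref{e.thm.regular} (equivalently, condition~\eqref{e.regularpoint} in Definition~\ref{d.regularpoint}) and the minimal-scale bound $R_{\sigma,\alpha}(x_0) \le \gamma r_0$ — hold with constants that do not depend on $x_0$. The minimal-scale bound is immediate from~\eqref{e.uniform.E}: since $\mathcal{E}(\tfrac\ep r, x_0) \le \mathcal{E}(\tfrac\ep r)$ for every $r$, we get $R_{\sigma,\alpha}(x_0) \le R_{\sigma,\alpha}(0) = R_{\sigma,\alpha}$, and $R_{\sigma,\alpha} \le \gamma r_0$ is part of the standing smallness (it is the same minimal-scale hypothesis already present in Theorem~\ref{t.fb.regularity}). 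For the same reason, every homogenization estimate used inside the proof of Theorem~\ref{t.fb.regularity} remains valid with $\mathcal{E}(\tfrac\ep r)$ in place of $\mathcal{E}(\tfrac\ep r, x_0)$, so no constant degenerates as $x_0$ varies in $B_{1/2}$. Hence the whole content of the proof is the verification of~\eqref{e.thm.regular} at an arbitrary $x_0$.

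First I would upgrade the $L^2$-flatness hypothesis to a pointwise one. Since $u^\ep \ge 0$ solves the normalized problem with $f \equiv \lambda$, Lemma~\ref{l.basicregularity} and the quadratic growth of Lemma~\ref{l.BlankHao} give a uniform bound for $u^\ep$ on $B_1$, and, writing $h(x) = \tfrac\lambda2 \tfrac{(e\cdot x + t)_+^2}{e\cdot\ahom e} \in \mathbb{H}_\lambda$, the explicit form of $h$ together with $\|h\|_{L^2(B_2)} \le \|u^\ep\|_{L^2(B_2)} + \delta$ bounds $h$ and its derivatives uniformly. The interpolation Lemma~\ref{l.interpolation} applied to $u^\ep - h$ (which is $C^{0,\alpha}$ on $B_1$ by De Giorgi-Nash-Moser) then yields $\|u^\ep - h\|_{L^\infty(B_{3/4})} \le C\delta^{\kappa}$ for some $\kappa(d,\Lambda) > 0$.

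Next I would turn this into a coincidence-set density bound around every $x_0 \in \varGamma(u^\ep) \cap B_{1/2}$, using only the non-degeneracy~\eqref{e.nondeg}. Fix such an $x_0$ and $s \in [\gamma r_0, \gamma^{-1}r_0]$; for $r_0$ small we have $B_{2s}(x_0) \subset B_1$, and $\|u^\ep - h\|_{L^\infty(B_s(x_0))} \le C\delta^\kappa =: \eta s^2$ with $\eta$ as small as we wish once $\delta$ is small, since $s$ is comparable to the fixed scale $r_0$. If a point $y \in B_s(x_0)$ had $u^\ep(y) > 0$ while $B_\rho(y) \subset \{h = 0\} \cap B_s(x_0)$ for some $\rho > \sqrt{\eta/c}\,s$, then on $B_\rho(y)$ one would have $0 \le u^\ep \le \eta s^2$ but, by~\eqref{e.nondeg}, $\sup_{B_\rho(y)} u^\ep \ge c\rho^2 > \eta s^2$, a contradiction; hence every $y \in B_s(x_0)$ lying at distance more than $\sqrt{\eta/c}\,s$ from both $\{e\cdot x + t > 0\}$ and $\partial B_s(x_0)$ belongs to $\varLambda(u^\ep)$. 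Applying the same dichotomy at $x_0$ itself, which is a free boundary point and therefore cannot have $u^\ep \equiv 0$ on a ball around it, shows that the hyperplane $\{e\cdot x + t = 0\}$ passes within $C\delta^{\kappa/2}$ of $x_0$; consequently the set of admissible $y$ above occupies a fraction at least $\tfrac12 - C\delta^{\kappa/2} s^{-1} \ge \tfrac12 - C\delta^{\kappa/2}(\gamma r_0)^{-1}$ of $B_s(x_0)$. Taking $\delta$ small enough that this exceeds a fixed $\vartheta \in (0,\tfrac12)$ gives $|\varLambda(u^\ep) \cap B_s(x_0)| \ge \vartheta |B_s(x_0)|$ for all $s \in [\gamma r_0, \gamma^{-1}r_0]$, which is precisely~\eqref{e.thm.regular} (and, with $R_{\sigma,\alpha}(x_0) \le \gamma r_0$ from the first paragraph, condition~\eqref{e.regularpoint}).

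With both hypotheses of Theorem~\ref{t.fb.regularity} verified at $x_0$, that theorem produces $h^{(r)} \in \mathbb{H}_\lambda^0$ such that~\eqref{e.ls.fb.c1beta} holds at $x_0$ for every $r \in [R_{\sigma,\alpha}(x_0), \tfrac12)$ and $s \in [r, \tfrac12)$, and the density bound above is exactly the requirement of Definition~\ref{d.regularpoint}, so $x_0$ is a large-scale regular free boundary point; since $x_0$ was arbitrary, this proves the theorem. The main obstacle is the flatness-to-density step: one must convert pointwise closeness of $u^\ep$ to a half-space solution at a single (fixed) scale into genuine information about the coincidence set, ruling out a region where $u^\ep$ is positive but far below the quadratic growth rate — this is what the non-degeneracy of Lemma~\ref{l.BlankHao} supplies — and one must do so with all constants uniform in $x_0 \in B_{1/2}$, which is exactly the role of the hypothesis~\eqref{e.uniform.E} (without it the minimal scale and the homogenization error would vary from point to point, cf.~\eqref{e.error.translation}).
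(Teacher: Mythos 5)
Your proof is correct and follows the same underlying strategy as the paper's: upgrade the given flatness to an $L^\infty$ bound, use non-degeneracy~\eqref{e.nondeg} to show that points where $u^\ep>0$ must lie near $\{e\cdot x+t>0\}$, conclude a density lower bound for $\varLambda(u^\ep)$, check that the free boundary point $x_0$ lies close to the dividing hyperplane, and then invoke Theorem~\ref{t.fb.regularity}, with~\eqref{e.uniform.E} ensuring uniformity in $x_0$. Where you diverge is in how the flatness bound entering the non-degeneracy argument is obtained. The paper runs through Lemma~\ref{l.flatnessdecay} and produces the iterated estimate $\|u^\ep_r\|_{L^\infty(B_1\cap\{x_d<0\})}\le Cr^\beta\delta+C\mathcal{E}(\ep/r)^\alpha$ for all admissible $r$, then extracts the density bound from the strip estimate $\tau^2\le Cr^\beta\delta+C\mathcal{E}(\ep/r)^\alpha$. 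You observe that for the purpose of verifying~\eqref{e.thm.regular}, only the initial flatness at the fixed scale window $[\gamma r_0,\gamma^{-1}r_0]$ is needed, so you skip the iteration and work directly with the $L^\infty$ flatness derived from the $L^2$ hypothesis via Lemma~\ref{l.interpolation}. This is a genuine simplification: the iterated flatness decay is already internal to Theorem~\ref{t.fb.regularity}, so re-invoking Lemma~\ref{l.flatnessdecay} before applying that theorem is redundant, and your route makes the logical dependency cleaner. Both approaches locate the hyperplane near $x_0$ using the same mechanism (non-degeneracy at the free boundary point rules out $x_0$ sitting deep in $\{h=0\}$, and the $L^\infty$ closeness to $h$ together with $u^\ep(x_0)=0$ rules out $x_0$ sitting deep in $\{h>0\}$). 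One small caveat you should tighten: you should state explicitly that $u^\ep$ solves the obstacle problem in a ball large enough (say $B_2$, as in Definition~\ref{d.regularpoint}) so that $B_{2\rho}(y)$ and $B_s(x_0)$ stay inside the domain for every $x_0\in B_{1/2}$ and $s\in[\gamma r_0,\gamma^{-1}r_0]$; the paper is equally implicit on this point.
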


\begin{proof}
The Lebesgue measure of the set $ \varLambda(u^\ep_r) \cap B_1 $ can be estimated  from below, employing Lemma \ref{l.flatnessdecay}. Without loss of generality, let us assume that the half-space solution $ h$ in the 
Lemma \ref{l.flatnessdecay}
depends on direction $ e=e_d$. Hence
 \begin{equation}
  \| u^\ep_{r}  \|_{L^\infty(B_1 \cap \{ x_d<0\} )}  \leq C r^\beta \delta +C    \mathcal{E}(\tfrac{\ep}{r} )^{\alpha} .
\end{equation}
Now let $ x^0  \in B_1 \cap \{ x_d<0\} $ be such that $ u_r^\ep (x^0)>0$, and let $2 \tau :=- x_d^0=-x^0 \cdot e_d>0$, then 
by the non-degeneracy of $ u_r^\ep$,
\begin{align*}
c \tau^2 \leq \sup_{B_\tau(x_0)} u^\ep_r
 \leq C r^{\beta} \delta +C    \mathcal{E}(\tfrac{\ep}{r} )^{\alpha}.
\end{align*}
 Therefore
we obtain
\begin{align*}
\tau^{2} \leq 
C r^\beta \delta +C (\mathcal{E} (\tfrac{\ep}{r} ))^{  \alpha}.
\end{align*}
Hence, 
\begin{align*}
| \{ u_r^\ep >0\} \cap \{ x_d <0 \} | \leq c_d \tau 
\end{align*}
is small, and
\begin{align*}
\frac{| \varLambda( u^\ep_r) \cap B_1| }{|B_1|} \geq 
\frac{1}{2}-C\tau >\vartheta>0,
\end{align*}
if $ r\geq R_{\sigma,\alpha}$.
Applying Theorem \ref{t.fb.regularity}, we obtain  \eqref{e.ls.fb.c1beta}.

Now if $ x_0 \in  \varGamma(u^\ep) \cap B_{1/2} $, then we can apply Lemma \ref{l.flatnessdecay} and Theorem \ref{t.fb.regularity} for the rescaled solutions $ u^\ep_{r, x_0}$.
\end{proof}


\bigskip

\bigskip

\subsection*{Acknowledgments}
This project has received funding from the Academy of Fin- land and the European Research Council (ERC) under the European Union's Horizon 2020 research and innovation programme (grant agreement No 818437).

\bigskip 

\appendix
\section{Examples in dimension one}
\label{s.appendix}
In this appendix we briefly discuss homogenization of the obstacle problem and its free boundary  in dimension one.
We give two important one-dimensional examples, which clarify the main assumptions and the setting of our problem.

\smallskip

\subsection{Homogenization of the normalized obstacle problem and its free boundary in 1D}

Let the coefficient field~$\a$  be a continuous function taking values in~$[1,2]$. Let~$u_\varepsilon \geq 0$ be the solution of the following  
one-dimensional obstacle problem,  with oscillating coefficients;
\begin{align} \notag 
\left\{
\begin{aligned}
 & 
(a(\tfrac{\cdot}{\varepsilon})u^\prime_\varepsilon)^\prime =\chi_{\{u_\varepsilon>0\}}
 \quad  \mbox{in } (0,4),
\\
& 
 u_\varepsilon(0)=0, \; 
  u_\varepsilon(4)=  1.
\end{aligned}
\right.
\end{align}
Denote by 
\begin{align*} 
\alpha_\varepsilon  :=\inf \{x \in (0,4) \, : \, u_\varepsilon(x)>0\}
\end{align*}
the free boundary point of the problem. This is a unique point since, by the minimum principle,~$u_\varepsilon$ is positive in~$(\alpha_\ep, 4)$ and zero on~$[0,\alpha_\ep]$. After a direct computation, the solution can be written as 
\begin{equation}
u_\varepsilon(x)
= 
\int_{\alpha_\ep}^{x \wedge \alpha_\ep} (t - \lambda_\ep) a^{-1}(\tfrac{t}{\varepsilon})\, dt ,
\quad
\end{equation}
where~$\lambda_\ep$ is chosen as
\begin{align}  \label{e.lambdaep}
\lambda_\ep 
= 
\Big( \int_{\alpha_\ep}^{4} t a^{-1}(\tfrac{t }{\varepsilon})\, dt - 1 \Big)
\Big( \int_{\alpha_\ep}^{4} a^{-1}(\tfrac{t}{\varepsilon})\, dt \Big)^{-1} 
\end{align}
to guarantee the boundary condition~$u_\varepsilon(4)=1$. Since~$a$ takes values less than~$2$, we see that~$\lambda_\ep$ is positive.  Employing then the fact that~$u_\varepsilon$ is~$C^1$ in the interior~$(0,2)$, we also obtain that~$u_\varepsilon^\prime(\alpha_\varepsilon)=0$, which gives us~$\alpha_\ep = \lambda_\ep$ and thus~\eqref{e.lambdaep} gives us the equation to solve~$\alpha_\ep$. 

\smallskip

Next, we let~$\bar a_\ep$ to be the coarsened coefficient and~$\phi_\ep$ the corrector: 
\begin{align*} 
\bar a_\ep 
= 
\Big( \frac14 \int_0^4 a^{-1}(\tfrac{x}{\varepsilon}) \, dx  \Big)^{-1} 
\quad \mbox{and} \quad 
\phi_\ep(x) = \bar a_\ep \int_0^x a^{-1}(\tfrac{t}{\varepsilon}) \, dt - x
\, .
\end{align*}
We assume the bound 
\begin{align}  \label{e.oned.homogass}
| \bar a_\ep  - \bar a | + \| \phi_\ep \|_{L^2(0,4)} \leq \mathcal{E}(\ep)
\end{align}
and~$\mathcal{E}(\ep) \to 0$ as~$\ep \to 0$.

Now let
\begin{equation}
\bar{a}:=\mathbb{E} \left( \int_0^1 a^{-1}(t)dt \right)^{-1},
\end{equation}
be the homogenized matrix, and~$\bar{u}$ be the solution of the obstacle problem 
solves 
\begin{align} \notag 
\left\{
\begin{aligned}
 & 
(\bar a \, \bar u^\prime)^\prime =\chi_{\{\bar u>0\}}
 \quad  \mbox{in } (0,4),
\\
& 
\bar u(0)=0, \; 
\bar u(4)=  1.
\end{aligned}
\right.
\end{align}
As above, a straightforward computation yields that 
\begin{align*}
\bar u (x)
= 
\bar a^{-1} \int_{\bar \alpha}^{x \wedge \bar \alpha} (t - \bar \alpha) \, dt ,
\quad
\mbox{with}
\quad
\bar \alpha
=  4 - \sqrt{2 \bar a }
\end{align*}
We  will then verify that 
\begin{enumerate}
  \item~$\alpha_\varepsilon \rightarrow \bar{\alpha}$ a.s. as~$\varepsilon \rightarrow 0$, 
  \item~$u_\varepsilon \rightarrow \bar{u}$ a.s. as~$\varepsilon \rightarrow 0$.
\end{enumerate}
Recall that~$\alpha_\varepsilon$ and~$\bar{\alpha}$ are determined by the following equations
\begin{align}
\notag
\int_0^4 (x-\alpha_\varepsilon)_+a^{-1}(\tfrac{x}{\varepsilon})dx=1
\quad 
\textrm{and}
\quad 
\int_0^4 (x-\bar{\alpha})_+\bar{a}^{-1}dx=1.
\end{align}
On the one hand, we have 
\begin{align}
\notag
\int_0^4 (x-\bar{\alpha})_+\bar{a}^{-1}dx = \frac12 \bar a^{-1}  (4-\bar \alpha)^2 . 
\end{align}
By integration by parts, we obtain
\begin{align*} 
\int_0^4 
(x-{\alpha_\varepsilon})_+  a^{-1}(\tfrac{x}{\varepsilon}) \, dx 
= \frac12 \bar a_\ep^{-1}  (4-{\alpha_\varepsilon})^2
-
\bar a_\ep^{-1}  
\int_{\alpha_\varepsilon}^4  \phi(x)  \, dx
.
\end{align*} 
Therefore, it follows, after some manipulations, that
\begin{align*} 
 \bar \alpha - \alpha_\varepsilon 
=
\frac{1}{8 - \bar \alpha - \alpha_\varepsilon} \biggl( 
\bar a_\ep (\bar a^{-1} - \bar a_\ep^{-1})
+  
2 \int_{\alpha_\varepsilon}^4  \phi_\ep(x)  \, dx
\biggr)
\end{align*}
The assumption~\eqref{e.oned.homogass} then implies that 
\begin{align*} 
| \bar \alpha - \alpha_\varepsilon | \leq C  \mathcal{E}(\ep) .
\end{align*}


Therefore~$\alpha_\varepsilon \rightarrow \bar{\alpha}$.
Using the explicit formulas for~$u_\varepsilon$ and for~$\bar{u}$, together with the convergence 
$\alpha_\varepsilon \rightarrow \bar{\alpha}$, we obtain the  convergence 
$ u_\varepsilon \rightarrow \bar{u}$. Furthermore, the calculations show that in the periodic case 
\begin{align*}
 \| u_\varepsilon-\bar{u} \|_{L^\infty} \leq C \varepsilon \textrm{ and }
 | \alpha_\varepsilon-\bar{\alpha} | \leq C \varepsilon.
 \end{align*}


\smallskip

\subsection{The homogenization of the free boundary  does not hold with a fixed obstacle}

We next show an example demonstrating the instability of the free boundary in homogenization with a fixed obstacle. Indeed, one cannot expect homogenization to hold even if the coefficients are smooth.

\smallskip

To this end, let~$\varphi~$Be a given concave function in the interval~$(-1,1)$, called an obstacle, we study the solution to the one-dimensional obstacle problem
\begin{align}\label{Ahom}
u \geq \varphi,~ (\bar{a} u^\prime)^\prime \leq 0, \textrm{ and } ~ (\bar{a} u^\prime)^\prime = 0 \textrm{ in }
\{ u > \varphi\}.
\end{align}
and the solutions~$u_\ep$ to corresponding heterogenous obstacle problem 
\begin{align}\label{Ahet}
u_\ep \geq \varphi,~ ({a^\ep} u_\ep^\prime)^\prime \leq 0, \textrm{ and } ~ ({a_\ep} u_\ep^\prime)^\prime = 0 \textrm{ in }
\{ u_\ep > \varphi\},
\end{align}
with boundary condition~$u^\ep(x)=u(x)$ when~$x=\pm 1$.

In order to simplify our calculations, we can take~$\varphi =\frac{1}{2} -{x^2}$, and the boundary conditions~$u^\ep(\pm 1)=u(\pm 1)=0$.

\begin{figure}
\begin{center}
\includegraphics[scale=0.40]{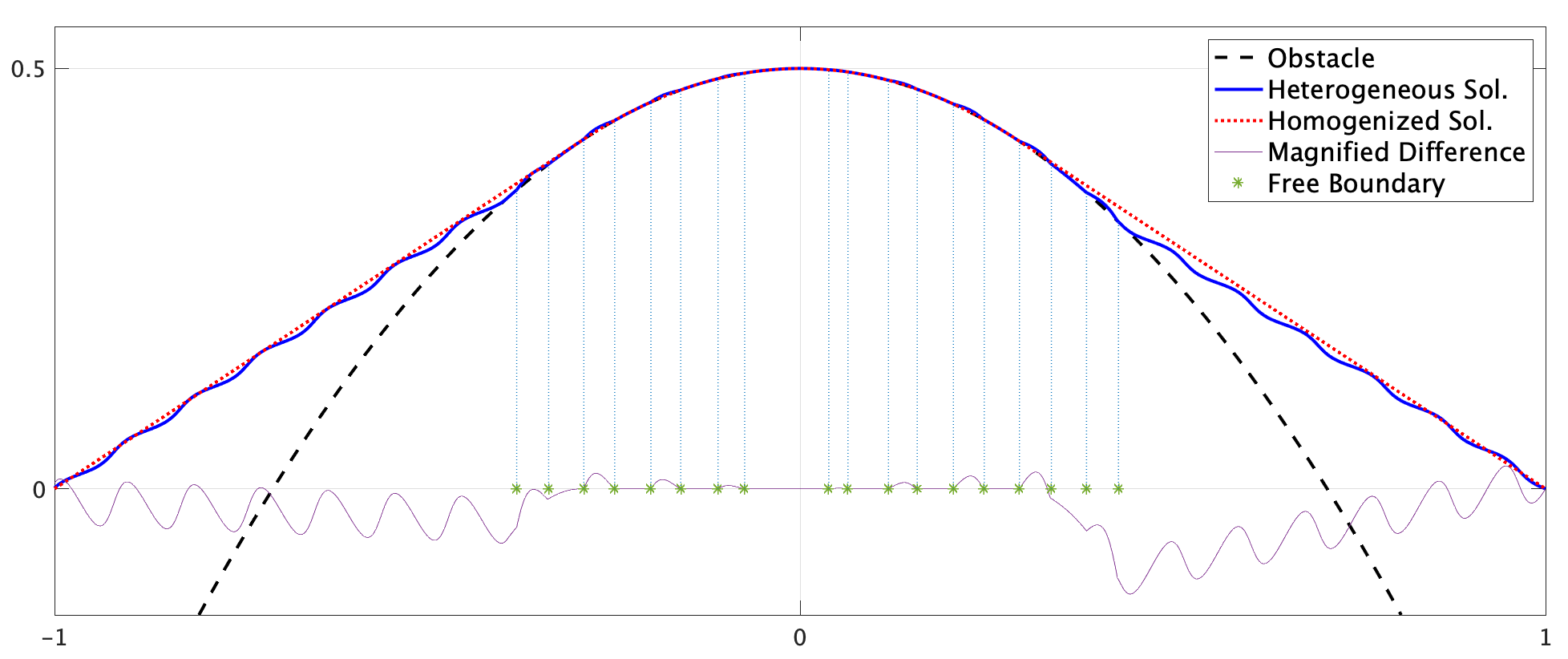}
\caption{Given obstacle $\varphi = \frac{1}{2} - x^2$, there are plotted both heterogeneous and homogenized solutions, together with $\varphi$ and the free boundary. Also the difference, magnified by a factor of $6$, between heterogeneous and homogenized solutions is plotted.
}
\end{center}
\label{f.oned.example}
\end{figure}

By~\eqref{Ahom},~$u$ is a  linear function in every connected component of  the set~$\{ u > \varphi\}$,   furthermore, 
if~$\bar{x}$ is the free boundary point closest to the boundary point~$-1$, then by straightforward calculations,~$u(x)= \varphi^\prime(\bar{x}) (x-\bar{x}) + \varphi{(\bar{x})}$ in the interval~$(-1, \bar{x})$ (The first order Taylor polynomial of~$\varphi$ at~$\bar{x}$).
Letting~$u(-1)=0$, we obtain~$\bar{x}=-1+\frac{\sqrt{2}}{2}$.\\
Now let us see what similar calculations give for~$u_\ep$. Denote by~$x_\ep$  the free boundary point closest to the boundary point~$-1$, 
then 
\begin{align*}
(a(\tfrac{\cdot}{\ep})u^\prime_\ep)^\prime=0 \textrm{ in the interval } (-1,x_\ep),
\end{align*}
and at the free boundary point~$x_\ep$  we get 
\begin{align*}
u_\ep( x_\ep)=\varphi (x_\ep), ~ u^\prime_\ep( x_\ep)=\varphi^\prime (x_\ep),
\end{align*}
Hence
\begin{align*}
u_\ep(x )=a(\tfrac{x_\ep}{\ep}) \varphi^\prime(x_\ep) \int_{x_\ep}^x a^{-1}(\tfrac{t}{\ep})dt +\varphi( x_\ep),  \textrm{ in the interval } (-1,x_\ep). 
\end{align*}
Letting~$u_\ep(-1)=0$, we get
\begin{align} \label{fbpep}
 x_\ep a(\tfrac{x_\ep}{\ep}) \int^{x_\ep}_{-1} a^{-1}(\tfrac{t}{\ep})dt +\frac{1}{2}- x_\ep^2=0
\end{align}

Assuming that~$x_\ep \to \bar{x}$ as~$\ep \to 0$, we get a contradiction in \eqref{fbpep} due to the oscillation of~$a(\tfrac{x_\ep}{\ep})$. Hence no homogenization of free boundary result can be expected given a fixed obstacle~$\varphi$.
This happens, since we have no control over~$\nabla \cdot (\a^\ep \nabla \varphi)$ as~$\ep \to 0+$.
This example fully justifies the normalization of the right hand side in the  context of homogenization of the obstacle problem.


\small
\bibliographystyle{abbrv}
\bibliography{obstacle-AK}

\newcommand{\noop}[1]{} \def\cprime{$'$}
\begin{thebibliography}{10}

\bibitem{JAobstacle}
J.~Andersson.
\newblock The obstacle problem.
\newblock {\em YSU Math School Series}, (2016).

\bibitem{JASignorini}
J.~Andersson.
\newblock Optimal regularity for the signorini problem and its free boundary.
\newblock {\em Invent. Math. 204 , no. 1, 1--82.}, (2016).

\bibitem{AGK}
S.~Armstrong, A.~Gloria, and T.~Kuusi.
\newblock Bounded correctors in almost periodic homogenization.
\newblock {\em Arch. Ration. Mech. Anal.}, 222(1):393--426, 2016.

\bibitem{AKM1}
S.~Armstrong, T.~Kuusi, and J.-C. Mourrat.
\newblock Mesoscopic higher regularity and subadditivity in elliptic
  homogenization.
\newblock {\em Comm. Math. Phys.}, 347(2):315--361, 2016.

\bibitem{AKM2}
S.~Armstrong, T.~Kuusi, and J.-C. Mourrat.
\newblock The additive structure of elliptic homogenization.
\newblock {\em Invent. Math.}, 208(3):999--1154, 2017.

\bibitem{AKMBook}
S.~Armstrong, T.~Kuusi, and J.-C. Mourrat.
\newblock {\em Quantitative stochastic homogenization and large-scale
  regularity}, volume 352 of {\em Grundlehren der Mathematischen Wissenschaften
  [Fundamental Principles of Mathematical Sciences]}.
\newblock Springer, Cham, 2019.

\bibitem{ASh}
S.~N. Armstrong and Z.~Shen.
\newblock Lipschitz estimates in almost-periodic homogenization.
\newblock {\em Comm. Pure Appl. Math.}, 69(10):1882--1923, 2016.

\bibitem{AS}
S.~N. Armstrong and C.~K. Smart.
\newblock Quantitative stochastic homogenization of convex integral
  functionals.
\newblock {\em Ann. Sci. \'Ec. Norm. Sup\'er. (4)}, 49(2):423--481, 2016.

\bibitem{AL1}
M.~Avellaneda and F.-H. Lin.
\newblock Compactness methods in the theory of homogenization.
\newblock {\em Comm. Pure Appl. Math.}, 40(6):803--847, 1987.

\bibitem{AL2}
M.~Avellaneda and F.-H. Lin.
\newblock Compactness methods in the theory of homogenization. {II}.
  {E}quations in nondivergence form.
\newblock {\em Comm. Pure Appl. Math.}, 42(2):139--172, 1989.

\bibitem{AL3}
M.~Avellaneda and F.-H. Lin.
\newblock {$L^p$} bounds on singular integrals in homogenization.
\newblock {\em Comm. Pure Appl. Math.}, 44(8-9):897--910, 1991.

\bibitem{BLP}
A.~Bensoussan, J.-L. Lions, and G.~Papanicolaou.
\newblock {\em Asymptotic analysis for periodic structures}.
\newblock AMS Chelsea Publishing, Providence, RI, 2011.
\newblock Corrected reprint of the 1978 original [MR0503330].

\bibitem{B01}
I.~Blank.
\newblock Sharp results for the regularity and stability of the free boundary
  in the obstacle problem.
\newblock {\em Indiana Univ. Math. J. 50, no. 3, 1077--1112}, 2001.

\bibitem{BZ1}
I.~Blank and Z.~Hao.
\newblock The mean value theorem and basic properties of the obstacle problem
  for divergence form elliptic operators.
\newblock {\em Comm. Anal. Geom. 23 , no. 1, 129--158}, 2015.

\bibitem{BZ2}
I.~Blank and Z.~Hao.
\newblock Reifenberg flatness of free boundaries in obstacle problems with vmo
  ingredients.
\newblock {\em Calc. Var. Partial Differential Equations 53 , no. 3-4,
  943--959}, 2015.

\bibitem{BK}
I.~Blank and K.~Teka.
\newblock The {C}affarelli alternative in measure for the nondivergence form
  elliptic obstacle problem with principal coefficients in vmo.
\newblock {\em Comm. Partial Differential Equations 39, no. 2, 321--353}, 2014.

\bibitem{Caf77}
L.~A. Caffarelli.
\newblock The regularity of free boundaries in higher dimensions.
\newblock {\em Acta Math. 139, no. 3-4, 155--184}, 1977.

\bibitem{Caf80}
L.~A. Caffarelli.
\newblock Compactness methods in free boundary problems.
\newblock {\em Comm. Partial Differential Equations 5, no. 4, 427--448.},
  (1980).

\bibitem{Caf98}
L.~A. Caffarelli.
\newblock The obstacle problem revisited.
\newblock {\em J. Fourier Anal. Appl. 4, no. 4-5, 383--402}, 1998.

\bibitem{DM1}
G.~Dal~Maso and L.~Modica.
\newblock Nonlinear stochastic homogenization.
\newblock {\em Ann. Mat. Pura Appl. (4)}, 144:347--389, 1986.

\bibitem{DM2}
G.~Dal~Maso and L.~Modica.
\newblock Nonlinear stochastic homogenization and ergodic theory.
\newblock {\em J. Reine Angew. Math.}, 368:28--42, 1986.

\bibitem{DGS}
E.~De~Giorgi and S.~Spagnolo.
\newblock Sulla convergenza degli integrali dell'energia per operatori
  ellittici del secondo ordine.
\newblock {\em Boll. Un. Mat. Ital. (4)}, 8:391--411, 1973.

\bibitem{FOS}
A.~Figalli, X.~Ros-Oton, and J.~Serra.
\newblock Generic regularity of free boundaries for the obstacle problem.
\newblock {\em Publ. Math. Inst. Hautes \'Etudes Sci. 132, 181--292}, 2020.

\bibitem{FS}
A.~Figalli and J.~Serra.
\newblock On the fine structure of the free boundary for the classical obstacle
  problem.
\newblock {\em Invent. Math. 215, no. 1, 311--366}, 2019.

\bibitem{FO}
J.~Fischer and F.~Otto.
\newblock A higher-order large-scale regularity theory for random elliptic
  operators.
\newblock {\em Comm. Partial Differential Equations}, 41(7):1108--1148, 2016.

\bibitem{FGS}
M.~Focardi, M.~S. Gelli, and E.~Spadaro.
\newblock Monotonicity formulas for obstacle problems with lipschitz
  coefficients.
\newblock {\em Calc. Var. Partial Differential Equations 54, no. 2,
  1547--1573}, 2015.

\bibitem{Frehse}
J.~Frehse.
\newblock On the regularity of the solution of a second order variational
  inequality.
\newblock {\em Boll. Un. Mat. Ital. (4) 6 , 312--315.}, (1972).

\bibitem{Fribook}
A.~Friedman.
\newblock {\em Variational principles and free-boundary problems.}
\newblock Second edition. Robert E. Krieger Publishing Co., Inc., Malabar, FL .
  {\rm x}+710 pp. ISBN: 0-89464-263-4, 1988.

\bibitem{GTbook}
D.~Gilbarg and N.~S. Trudinger.
\newblock {\em Elliptic partial differential equations of second order.}
\newblock Reprint of the 1998 edition. Classics in Mathematics.
  Springer-Verlag, Berlin, xiv+517 pp. ISBN: 3-540-41160-7, 2001.

\bibitem{Giusti}
E.~Giusti.
\newblock {\em Minimal surfaces and functions of bounded variation.}
\newblock Monographs in Mathematics, 80. Birkh\"auser Verlag, Basel, {\rm
  xii}+240 pp. ISBN: 0-8176-3153-4, 1984.

\bibitem{GNO}
A.~Gloria, S.~Neukamm, and F.~Otto.
\newblock Quantification of ergodicity in stochastic homogenization: optimal
  bounds via spectral gap on {G}lauber dynamics.
\newblock {\em Invent. Math.}, 199(2):455--515, 2015.

\bibitem{GNO2}
A.~Gloria, S.~Neukamm, and F.~Otto.
\newblock A regularity theory for random elliptic operators.
\newblock {\em Milan J. Math.}, 88(1):99--170, 2020.

\bibitem{GO1}
A.~Gloria and F.~Otto.
\newblock An optimal variance estimate in stochastic homogenization of discrete
  elliptic equations.
\newblock {\em Ann. Probab.}, 39(3):779--856, 2011.

\bibitem{GO6}
A.~Gloria and F.~Otto.
\newblock The corrector in stochastic homogenization: optimal rates, stochastic
  integrability, and fluctuations, \noop{2016}{preprint, 114 pp.,
  arXiv:1510.08290v3 (May 2016)}.

\bibitem{HKM}
J.~Heinonen, T.~Kilpel\"{a}inen, and O.~Martio.
\newblock {\em Nonlinear potential theory of degenerate elliptic equations}.
\newblock Dover Publications, Inc., Mineola, NY, 2006.
\newblock Unabridged republication of the 1993 original.

\bibitem{JKO}
V.~V. Jikov, S.~M. Kozlov, and O.~A. Ole{\u\i}nik.
\newblock {\em Homogenization of differential operators and integral
  functionals}.
\newblock Springer-Verlag, Berlin, 1994.
\newblock Translated from the Russian by G. A. Yosifian.

\bibitem{K0}
S.~M. Kozlov.
\newblock Averaging of differential operators with almost periodic rapidly
  oscillating coefficients.
\newblock {\em Mat. Sb. (N.S.)}, 107(149)(2):199--217, 317, 1978.

\bibitem{K1}
S.~M. Kozlov.
\newblock The averaging of random operators.
\newblock {\em Mat. Sb. (N.S.)}, 109(151)(2):188--202, 327, 1979.

\bibitem{MP}
N.~Matevosyan and A.~Petrosyan.
\newblock Almost monotonicity formulas for elliptic and parabolic operators
  with variable coefficients.
\newblock {\em Comm. Pure Appl. Math. 64, no. 2, 271--311}, 2011.

\bibitem{PV1}
G.~C. Papanicolaou and S.~R.~S. Varadhan.
\newblock Boundary value problems with rapidly oscillating random coefficients.
\newblock In {\em Random fields, {V}ol. {I}, {II} ({E}sztergom, 1979)},
  volume~27 of {\em Colloq. Math. Soc. J\'anos Bolyai}, pages 835--873.
  North-Holland, Amsterdam, 1981.

\bibitem{PSU}
A.~Petrosyan, H.~Shahgholian, and N.~Uraltseva.
\newblock {\em Regularity of free boundaries in obstacle-type problems.}
\newblock Graduate Studies in Mathematics, 136. American Mathematical Society,
  Providence, RI, x+221 pp. ISBN: 978-0-8218-8794-3, 2012.

\bibitem{SerraObs}
J.~Serra.
\newblock The obstacle problem: regularity of the free boundary and analysis of
  singularities, Lecture notes for the course.

\bibitem{Sh1}
Z.~Shen.
\newblock Convergence rates and {H}\"{o}lder estimates in almost-periodic
  homogenization of elliptic systems.
\newblock {\em Anal. PDE}, 8(7):1565--1601, 2015.

\bibitem{ShenBook}
Z.~Shen.
\newblock {\em Periodic homogenization of elliptic systems}, volume 269 of {\em
  Operator Theory: Advances and Applications}.
\newblock Birkh\"{a}user/Springer, Cham, 2018.
\newblock Advances in Partial Differential Equations (Basel).

\bibitem{Tartar}
L.~Tartar.
\newblock {\em The general theory of homogenization: a personalized
  introduction}, volume~7 of {\em Lecture Notes of the Unione Matematica
  Italiana}.
\newblock Springer-Verlag, Berlin; UMI, Bologna, 2009.

\bibitem{Y0}
V.~V. Yurinski{\u\i}.
\newblock On a {D}irichlet problem with random coefficients.
\newblock In {\em Stochastic differential systems ({P}roc. {IFIP}-{WG} 7/1
  {W}orking {C}onf., {V}ilnius, 1978)}, volume~25 of {\em Lecture Notes in
  Control and Information Sci.}, pages 344--353. Springer, Berlin-New York,
  1980.

\end{thebibliography}

\end{document}